\begin{document}
	
	
	\title{Nonautonomous Spectral Submanifolds for Model Reduction of Nonlinear Mechanical Systems under Parametric Resonance} 
	
	\author{Thomas Thurnher}
	\altaffiliation{Institute for Mechanical Systems, ETH Zürich, Leonhardstrasse 21, 8092 Zürich, Switzerland}
	\author{George Haller}
	\altaffiliation{Institute for Mechanical Systems, ETH Zürich, Leonhardstrasse 21, 8092 Zürich, Switzerland}
	\author{Shobhit Jain}
	\email{Shobhit.Jain@tudelft.nl}
	\altaffiliation{Delft Institute of Applied Mathematics, TU Delft, Mekelweg 4, 2628 CD Delft, The Netherlands}

	\date{\today}
	
	\begin{abstract}
		We use the recent theory of Spectral Submanifolds (SSM) for model reduction of nonlinear mechanical systems subject to parametric excitations. Specifically, we develop expressions for higher-order nonautonomous terms in the parameterization of SSMs and their reduced dynamics. We provide these results both for general first-order as well as second-order mechanical systems under periodic and quasiperiodic excitation using a multi-index based approach, thereby optimizing memory requirements and the computational procedure. We further provide theoretical results that simplify the SSM parametrization for general second-order dynamical systems. More practically, we show how the reduced dynamics on the SSM can be used to extract the resonance tongues and the forced response around the principal resonances in parametrically excited systems. In the case of two-dimensional SSMs, we formulate explicit expressions for computing the steady-state response as the zero-level set of a two-dimensional function for systems that are subject to external as well as parametric excitation. This allows us to parallelize the computation of the forced response over the range of excitation frequencies. We demonstrate our results on several examples of varying complexity, including finite-element type examples of mechanical systems. Furthermore, we provide an open-source implementation of all these results in the software package \texttt{SSMTool}.\end{abstract}
	
	\pacs{}
	
	\maketitle 

	\begin{quotation}
		As the complexity of engineering systems, such as MEMS devices, is growing, accurate reduced-order models (ROMs) are in high demand. Not only do such models promise quick routines for system design and analysis, but they can also deliver specifically desired information that might be inaccessible from a full system simulation. Reduction of the full system to Spectral Submanifolds (SSM) offers a rigorous mathematical procedure to derive such ROMs even for strongly nonlinear systems. In this paper, we develop a methodology for computing nonautonomous approximations to these SSMs up to arbitrary order, generalizing, improving and expanding available results in the literature. Using the exact ROM provided by the SSM-reduced dynamics, we analyze instabilities that arise due to parametric resonance and obtain the resulting steady states for a number of examples. Our results are also implemented in an open-source software library.
	\end{quotation}

	\section{Introduction}
	Parametric excitation is a widely observed phenomenon in mechanical systems that arises from time-dependence of parameters in a dynamical system. This time dependence is either due to external modulation or inherent nonlinear couplings \cite{Champneys2013}. Perhaps the simplest example is the periodic modulation of the length of a pendulum that may even destabilize its stable fixed point. This phenomenon is known as parametric resonance and occurs if the modulation frequency and the eigenfrequency of the pendulum assume a rational relationship.
	
	Due to its destabilizing effect, parametric resonance is often avoided in the operational regime of engineering systems~\cite{Sanches2012, Coleman1956}. However, it is also possible to take advantage of the resulting instability, which leads to exponential growth of energy and oscillation amplitudes. In rotor dynamics, for instance, parametric resonance may be used to virtually increase damping~\cite{Ecker2011}. Nonlinear energy harvesters show increased performance when driven in the regime of parametric resonance compared to systems using conventional resonance~\cite{Tehrani2015}. Micro-Electromechanical systems (MEMS) also frequently make use of parametric resonance. Examples include low-noise parametric amplifiers \cite{Rhoads2006_MEMS}, mass-sensing devices \cite{Li2014}, signal amplifiers \cite{Welte2013}, gyroscopes \cite{Polunin2017, Sharma2012}, picoprojectors, and microscanners \cite{Frangi2017}. Parametric resonance in combination with nonlinearites can also enable operation in a wider frequency range \cite{Rhoads2009}. 
	
	Among various types of parametric resonances, the \textit{principal parametric resonance}, where the external excitation frequency $\Omega$ and a system's modal frequency $\omega_i$ obey a relation of the form $\Omega = 2 \omega_i$, is of pronounced practical interest. Indeed, due to the presence of damping, higher-order parametric resonances become effectively inaccessible in most engineering structures \cite{Verhulst2002, Frangi2017, Verhulst2009, Champneys2013}. Nonetheless, high-order parametric resonances have been observed in specially engineered systems \cite{Jia2016}. 
	\subsection*{Forced response curves}
	\textit{Forced response curves} (FRC) visualize the relationship between a system's response amplitude and its excitation frequency. FRCs provide insights into the nonlinear characteristics of a system when compared with the linear frequency response, especially near resonance. Various techniques have been used to analyze parametrically excited systems. For instance, the Poincaré-Lindstedt method assumes the dependence of the period of the solution on a small excitation amplitude and expresses the periodic solution as an asymptotic series for weakly excited systems \cite{Verhulst1996}. The method of multiple scales introduces different temporal variables for different timescales. These are then used in perturbation expansions and can be applied to parametrically excited systems as well \cite{Warminski2005}. Other common methods found in the parametric excitation literature are averaging \cite{Rhoads2006_MEMS, Rhoads2006_Microbeam, Li2020, Polunin2017, Zaghari2016} and the analytical construction of approximate solutions, for instance, using a harmonic series \cite{Szabelski1995, Szabelski1997, Sorokin2015VibrationModulation, Aghamohammadi2022DynamicExcitations} or direct numerical simulation \cite{Warminski2003}. 
	
	For a systematic computation of FRCs, numerical continuation packages, such as AUTO~\cite{AUTO},  \textsc{coco}~\cite{Dankowicz2013}, or the harmonic balance-based NLvib\cite{Krack2019}, may be used. These methods are based on collocation or spectral discretization and typically suffer from the curse of dimensionality. Hence, their direct applications to high-dimensional finite element models are generally unfeasible \cite{Jain2021}.
	\subsection*{Stability Diagrams}
	Establishing the stability type of trivial fixed points is the first important step in the study of parametrically excited systems. The regions of instability and stability of these fixed points are plotted in an excitation amplitude vs. frequency diagram, often referred to as \textit{Strutt} or \textit{Ince Strutt} diagram. In these stability diagrams, the regions of instability are often called \textit{Floquet tongues} that extend in the direction of decreasing forcing amplitude as the system's damping decreases. The simplest technique for constructing stability diagrams is numerical time integration over a grid of amplitude and frequency parameters~\cite{Welte2013}. More sophisticated approaches employ Floquet theory and range from the method of infinte determinants in the conservative case \cite{Hansen1985} to spectral methods for obtaining the eigenvalues of the monodromy matrix~\cite{Lindh1970, Hansen1985, Welte2013}. In the extended phase space, a bifurcation of the trivial periodic orbits occurs along the stability boundary, which enables its computation using continuation techniques~\cite{Champneys2013}. 
	\subsection*{Model Reduction}
	The methods discussed above are well suited for computing FRCs and stability diagrams in low-dimensional examples under parametric excitation. For the analysis of realistic high-dimensional systems, however, reduced-order models (ROM) are necessary.  While projection onto modal subspaces is a powerful ROM technique for linear systems, in nonlinear systems, those subspaces lose their invariance. The dynamics and the trajectories projected on them thus no longer correspond to those of the full dynamical system. 
	
	In constrast, nonlinear normal modes (NNM)~\cite{Shaw1991} are invariant manifolds that perturb from linear modal subspaces and serve as the nonlinear analogues of those subspaces. Using projections to NNMs, Warminski et al.~\cite{Warminski2012} analyzed parametrically and externally excited mechanical systems. Sinha and coworkers~\cite{Sinha2005, SinhaManifolds} employed Lyapunov-Floquet transformations to parametrically forced systems to  obtain systems with autonomous linear part before computing NNMs. However, this method is feasible only for low-dimensional systems because of its reliance on the initial transformation. Furthermore, the existence and uniqueness of such NNMs is a priori unclear~\cite{ssmexist}.
	
	The theory of \textit{Spectral Submanifolds} (SSMs) \cite{ssmexist} has clarified the mathematical preliminaries required for rigorous model reduction of nonlinear mechanical systems. Specifically, an SSM is the unique smoothest invariant manifold that perturbs from a modal subspace under the addition of nonlinearities and small-amplitude parametric or external forcing. The existence and uniqueness of an SSM are guaranteed under appropriate non-resonance conditions \cite{ssmexist}. SSMs associated with slow eigenspaces (slow SSMs) attract full system trajectories. Hence, the reduced dynamics on an SSM serves as an exact reduced-order model for the full system. 
	
	In the recent years, significant advance has been made towards the automated computation of SSMs. In a first implementation, Ponsioen et al.~\cite{SSMTool} proposed a routine for automated computation of two-dimensional SSMs in modal coordinates. More recent efforts have focused on multidimensional SSM computation in physical coordinates, using only the eigenvectors associated with the master modal subspace~\cite{Jain2021}. This alleviated the computational burden imposed by a modal transformation and enabled SSM computations for high-dimensional finite element models. 
	
	In particular, for systems without internal resonances, the reduced dynamics on two-dimensional SSMs have been used to compute the forced response of periodically forced nonlinear systems without any numerical simulation, simply by finding the zero-level set of a function~\cite{isolatedresponsesten}. Coupled with the automated SSM computation procedure \cite{Jain2021}, this makes the extraction of FRCs very fast for high-dimensional finite element problems as well. If internal resonances are present, SSMs of higher dimension need to be computed over the linear modes involved in the resonance~\cite{MingwuPart1, MingwuPart2}. More recently, SSMs have also been constructed directly from individual trajectory data~\cite{Cenedese2022Data-drivenSystems}, used in control theory\cite{Mahlkneckt2022,Alora2022Data-DrivenRobots} and for systems with algebraic constraints\cite{Li2022ModelSubmanifolds}. All these developments, however, used only a leading-order approximation for the nonautonomous terms in the SSM and its reduced dynamics.
	
	To capture parametric effects, a high-order approximation of non-autonomous SSMs and their reduced dynamics is necessary. Ponsioen et al.~\cite{Ponsioen2020} have already presented such higher-order computations of the non-autonomous SSM, albeit in modal coordinates. For efficient memory management and for avoiding redundant computations, they employ a multi-index notation in their numerical algorithm. More recently, Opreni et al. \cite{Opreni2022} have developed expressions for the computation of nonautonomous SSMs in physical coordinates. Their expressions, however, are limited to systems with proportional damping and geometric nonlinearities up to cubic order. They use these expressions to analyze several mechanical systems subject to external periodic excitation (but no parametric forcing).
	\subsection*{Our contributions}
	Here we develop an automated algorithm for computing arbitrarily high-order approximations for non-autonomous SSMs and their reduced dynamics for mechanical systems in physical coordinates with general damping and geometric/material nonlinearities of arbitrary polynomial order. Furthermore, we develop separate expressions for general first-order dynamical systems in the Appendix. While our applications focus on periodically excited systems, our expressions and their implementations are developed for the general quasiperiodic case. In contrast to the prior approaches~\cite{Jain2021, Vizzaccaro2022, Opreni2022} that use a tensor-based notation, we use a multiindex notation, which minimizes the memory requirements and the number of equations that need to be solved at each polynomial degree. Indeed, we argue that for two-dimensional SSMs, the overhead of equations resulting from the use of tensors increases exponentially with the order of the approximation. 
	
	In prior work~\cite{Vizzaccaro2022, Opreni2022}, a special structure in the invariance equations associated with second-order mechanical systems containing up to cubic-order geometric nonlinearity and proportional damping was used to reduce the number of equations in SSM computation. We now provide a theoretical foundation and physical intuition for this structure, linked to SSM theory and computations for general second-order dynamical systems. 
	
	For the case of two-dimensional periodic SSMs, we develop the following results. For the first time, we demonstrate how SSM theory can be used to obtain stability diagrams for the principal parametric resonance of finite element examples directly from the ROM. Additionally, we analyze the forced response arising from these instabilities using the reduced dynamics on the SSM. We show how the FRCs of mechanical systems subject to external as well as parametric periodic excitation can be obtained using an analytic transformation of the reduced dynamics on the SSM, without the need for any numerical continuation. This result allows for parallelization of the FRC computation over any desired frequency range. Finally, we provide an open-source implementation of our computational routines and numerical examples in the MATLAB-based software package SSMTool 2.4~\cite{SSMTool2}.
	
	The remainder of the paper is structured as follows. We discuss the general class of mechanical systems we treat in Section~\ref{sec:Setup}. Section \ref{sec:SSMs} provides the basic results from SSM theory relevant for our computations. In Section~\ref{sec:CompSSM}, we present the autonomous and nonautonomous invariance equations associated to the SSM, and develop specific resonance conditions which determine the normal form of the SSM-reduced dynamics. We provide an automated routine for the extraction of stability diagrams in Section~\ref{sec:SDSSM} along with numerical examples. Furthermore, we develop explicit expressions that enable analytical extraction of FRCs from two-dimensional, nonautonomous SSMs along with examples in Section~\ref{sec:FRCSSM}. Finally, we draw conclusions in Section~\ref{sec:conc}.
	
	\section{Setup}
	\label{sec:Setup}
	
	We consider a quasiperiodic forced nonlinear mechanical system of dimension $n$. 
	\begin{align}
		\label{eq:DS}
		\bm{M\ddot{y} + C}&\bm{\dot{y} + Ky + \bm{f}(y,\dot{y})} = \epsilon \bm{g}(\bm\Omega t,\bm{y},\bm{\dot{y}})
		, \ \ 0\leq\epsilon \ll 1,
	\end{align}
	with $\bm{y} \in \mathbb{R}^n$ a generalized displacement; and $\bm{M}, \bm{C}, \bm{K} \in \mathbb{R}^{n\times n}$ denoting the mass, damping and stiffness matrices, respectively. The function $\bm{f}(\bm{y},\bm{\dot{y}})$ is a nonlinear function that is $r$ times continuously differentiable  with $r\ge 1$. The mechanical system is subject to periodic parametric excitation via $\bm{f}_\text{param}(\bm\Omega t,\bm{y},\bm{\dot{y}})$ and possibly to external periodic forcing~$\bm{f}_{\text{ext}}(\bm\Omega t) $ with the frequency vector $\bm \Omega \in\mathbb{R}^K$ for some $K\ge 0$.  Here $\bm{f}_\text{param}$ is a general function of  $\bm{y}$ and $ \bm{\dot{y}}$ and is taken to be $C^r$ in these variables. The two types of excitation are included in the general excitation function 
	\begin{equation}\label{eq:forcing}
		\bm{g}(\bm\Omega t,\bm{y},\bm{\dot{y}}) = \bm{f}_{\text{ext}}(\bm\Omega t) + \bm{f}_\text{param}(\bm\Omega t,\bm{y},\bm{\dot{y}}),
	\end{equation}
	where $\bm g$ will be autonomous for $K = 0$, periodic in $t$ for $K = 1$, and quasiperiodic in $t$ for $K > 1$ with $K$ rationally incommensurate frequencies. 
	
	The second-order system~\eqref{eq:DS} can be transformed to the first-order form
	\begin{align} \label{eq:DS_FirstOrder}
		\bm{B\dot{z} = Az + F(z) }+ \epsilon \bm{ G}(\bm\Omega t, \bm{z}),
	\end{align}
	where
	\begin{align} 
		\label{eq:choiceDS}
		\bm{z} =  \begin{bmatrix} \bm{y}  \\ \dot{\bm{y}} \end{bmatrix}, 
		\bm{A} =  \begin{bmatrix} -\bm{K} & \bm{0} \\ \bm{0} & \bm{M} \end{bmatrix}, 
		\bm{B} =  \begin{bmatrix} \bm{C} & \bm{M} \\ \bm{M} & \bm{0} \end{bmatrix}, 
		\\
		\bm{ F(z)} =  \begin{bmatrix}- \bm{f(y,\dot{y})} \\ \bm{0} \end{bmatrix}, 
		\bm{ G}(\bm\Omega t, \bm{z}) =  \begin{bmatrix} \bm{g}(\bm\Omega t,\bm{y},\bm{\dot{y}}) \\ \bm{0}\end{bmatrix},
	\end{align}
	with $\bm{z} \in \mathbb{R}^N, N = 2n$. This choice for the first-order form is not unique\cite{Jain2021} but all such forms can be used for the computation of SSMs. The autonomous linear part of system~\eqref{eq:DS_FirstOrder},
	\begin{align}\label{eq:DS_lin}
		\bm{B\dot{z} = Az},
	\end{align}
	has a fixed point at $\bm{z}=\bm{0}$, which we assume to be hyperbolic and stable. We focus on an even subset of $M << N $ eigenvalues and eigenvectors of system~\eqref{eq:DS_lin}, the choice of which is discussed below. Note that the task of finding specific subsets of eigenvalues and eigenvectors can be carried out efficiently with iterative algorithms, as presented, for instance, by Golub and Van Loan \cite{Golub2013MatrixComputations}. 
	
	The reduced left and right eigenvalue problems are given by
	\begin{align}\label{eq:LeftEigenvalue}
		(\bm{A}- \lambda_j \bm{B})\bm{v}_j = \bm{0},
		\qquad
		\bm{u}^*_j(\bm{A}- \lambda_j \bm{B}) = \bm{0}
		\\
		(\bm{A}- \conj{\lambda_j} \bm{B})\conj{\bm{v}_j} = \bm{0},
		\qquad
		\conj{\bm{u}^*_j}(\bm{A}- \conj{\lambda_j} \bm{B}) = \bm{0}
	\end{align}
	for $j=1,...,M/2$. We arrange these eigenvectors in the increasing order of magnitudes of the real parts of the associated eigenvalues, i.e.,
	\begin{align}
		\text{Re}(\lambda_{M/2}) \leq \text{Re}(\lambda_{M/2-1}) \le ... \leq  \text{Re}(\lambda_{1})  < 0.
	\end{align}
	The choice of $\bm A$ and $\bm B$ in eq.~\eqref{eq:choiceDS} results in displacement and velocity variables that are are inherently related, such that
	\begin{align} 
		\bm{v}_{j} =          \begin{bmatrix}
			\bm \phi_j 
			\\
			\bm \phi_j \lambda_{j}
		\end{bmatrix}
		, \
		\bm{u}_{j} = 
		\begin{bmatrix}
			\bm \theta_j 
			\\
			\bm \theta_j \conj{\lambda_{j}}
		\end{bmatrix},
		\\
		\conj{\bm{v}_{j}} =          \begin{bmatrix}
			\conj{\bm \phi_j} 
			\\
			\conj{\bm \phi_j \lambda_{j}}
		\end{bmatrix}
		, \
		\conj{\bm{u}_{j}} = 
		\begin{bmatrix}
			\conj{\bm \theta_j }
			\\
			\conj{\bm \theta_j }\lambda_{j}
		\end{bmatrix},
	\end{align}
	where the vectors $\bm \phi_j, \bm \theta_j \in\mathbb{C}^n$ are the right and left eigenvectors associated with the quadratic eigenvalue problem
	\begin{align}\label{eq:EigenprobSecondOrder}
		(\lambda_j^2 \bm{M} + \lambda_j \bm{C} + \bm{K}) \bm \phi_j =  \bm{0} , \quad 
		\bm \theta_j^*  (\lambda_j^2 \bm{M} + \lambda_j \bm{C} + \bm{K})   =  \bm{0}
		\\
		(\conj{\lambda_j^2} \bm{M} + \conj{\lambda_j} \bm{C} + \bm{K}) \conj{\bm \phi_j} =  \bm{0} , \quad 
		\conj{\bm \theta_j^*} ( \conj{\lambda_j^2} \bm{M} + \conj{\lambda_j} \bm{C} + \bm{K})   =  \bm{0}
	\end{align}
	for $j=1,...,M/2$. 
	
	Note that the above procedure is valid for any general damping matrix $\bm{C}$ and reduces to the computation of conservative eigenmodes when $\bm{C}$ is simultaneously diagonalizable with the mass and stiffness matrices, e.g., in the case of proportional damping. 
	
	\section{Non-autonomous invariant manifolds and model reduction}\label{sec:SSMs}
	
	We now introduce the methodology for computing the SSMs on which the ROM of the parametrically excited system \eqref{eq:DS} is constructed. The complex-conjugate pairs of right eigenvectors span spectral the subspaces $E_j = \text{span}\{ \bm{v}_{j} , \conj{\bm{v}_{j}} \}$, which are invariant under the linear flow of system~\eqref{eq:DS_lin}. To analyze the linear system response via a ROM, we select a $M$-dimensional spectral subspace ${\cal E} = E_{1} \oplus E_{2} \oplus ... \oplus E_{{M/2}}$, the \textit{master spectral subspace}. The modes $E_j$ are chosen such that the corresponding spectrum
	\begin{align}
		\text{Spec} (\mathcal{E})  =  \{\lambda_1, \conj{\lambda_1}  ..., \lambda_{M/2}, \conj{\lambda_{M/2}} \}
	\end{align}
	contains pairs of eigenvalues that potentially assume near-resonance relationships with either the forcing frequencies or among each other. In the absence of any (near) resonances, the subspace $\mathcal{E}$ typically contains the slowest modes of the system. The corresponding eigenvectors are normalised such that 
	\begin{align}\label{eq:Normalisation}
		(\bm{u}_i)^*\bm{B}\bm{v}_j = \delta_{ij}
	\end{align}
	which implies 
	$ \bm \theta_i^* \bm{C} \bm \phi_j  
	+ \lambda_i \bm \theta_i^* \bm{M} \bm \phi_j  
	+ \lambda_j \bm \theta_j^* \bm{M} \bm \phi_i  
	= \delta_{ij}$. Note that this normalization choice is different from the commonly adopted mass normalization for proportionally damped systems, i.e., $\bm \phi_i^* \bm{M} \bm \phi_j = \delta_{ij}$.
	
	Under the addition of nonlinearities to system~\eqref{eq:DS_lin}, the subspace $\mathcal{E}$ does not remain invariant. Model reduction onto $\mathcal{E}$ then looses its justification as the trajectories projected onto $\mathcal{E}$ no longer correspond to the trajectories of the full dynamical system. A mathematically rigorous alternative is to construct an invariant manifold that acts as a nonlinear continuation of $\mathcal{E}$. \textit{Spectral Submanifolds} (SSM)~\cite{ssmexist} are such continuations with their existence conditioned on a set of resonance conditions~\cite{Ponsioen2020}. Specifically, an \textit{autonomous} SSM, $\mathcal{W}(\mathcal{E})$, is the unique smoothest invariant manifold that perturbs smoothly from the spectral subspace $\mathcal{E}$ under the addition of the nonlinear autonomous terms $\bm{f(x,\dot{x})}$ to the linearized dynamical system \eqref{eq:DS_lin}. The dynamics on such an SSM are called its \textit{ reduced dynamics} and serve as an exact ROM of the full dynamical system~\eqref{eq:DS} for $\varepsilon=0$.
	
	Under the addition of quasiperiodic excitation (i.e., for $\varepsilon>0$, small), this autonomous manifold starts to oscillate and deform, becoming a \textit{non-autonomous} SSM, $\mathcal{W}_{\bm \gamma_\epsilon}(\mathcal{E})$. Instead of being anchored to the trivial fixed point $\bm{z = 0}$, the quasiperiodic SSM is now attached to $\mathcal{O}(\varepsilon)~C^r$-close invariant torus $\bm \gamma_\epsilon$ into which this fixed point perturbs\cite{Guckenheimer1983}. The tangency of the autonomous SSM to $\mathcal{E}$ implies that the nonautonomous SSM perturbs from the spectral subbundle $\bm \gamma_\epsilon  \times \mathcal{E}$.  
	Now, we will use $$\bm{\phi}=\boldsymbol{\Omega} t \in{\mathbb T} ^K$$ to denote the phase of the quasiperiodic excitation term. For computing $\mathcal{W}_{\bm \gamma_\epsilon}(\mathcal{E})$ and its reduced dynamics, the following results are useful \cite{ssmexist}.
	\begin{enumerate}
		\item $\mathcal{W}_{ \bm \gamma_\epsilon}(\mathcal{E})$ can be described by a time dependent parametrization 
		\begin{align}
			\bm{W}_\epsilon(\bm{p},\bm\phi): \mathcal{U} = U \times \mathbb{T}^K \mapsto \mathbb{R}^{N},\quad U \in \mathbb{C}^M,
		\end{align}
		from an open set $\mathcal{U}$ onto the phase space of the full system \eqref{eq:DS_FirstOrder}.
		\item The reduced vector field $\bm{R}_\epsilon ( \bm{p}, \bm\phi): \mathcal{U} \times \mathbb{T}^K \mapsto \mathcal{U}$ on $\mathcal{W}_{\bm \gamma_\epsilon}(\mathcal{E})$ satisfies the \textit{invariance equation}
		\begin{align}\label{eq:InvEq}
			\bm{B} ((\partial_{\bm p} {\bm W}_\epsilon) \bm{R}_\epsilon + (\partial_{\bm\phi} \bm{W}_\epsilon)\bm\Omega )
			= 
			\bm{A}\bm{W}_\epsilon + \bm{F}\circ \bm{W}_\epsilon 
			+ \epsilon \bm{G}(\bm\phi, \bm{W}_\epsilon)
		\end{align}
		and generates the reduced dynamics on $\mathcal{W}_{\bm \gamma_\epsilon}(\mathcal{E})$ via the ODE
		\begin{align}
			\label{eq:RedDyn}
			\dot{\bm{p}} = \bm{R}_\epsilon (\bm{p},\bm\phi),\quad  \dot{\bm\phi} = \bm\Omega.
		\end{align}
	\end{enumerate}
	If for a given dynamical system we wish to obtain a mathematically exact ROM, it is therefore our task to try and construct representations of the invariant manifold parametrization $\bm{W}_\epsilon$ and the reduced vector field $\bm{R}_\epsilon$. Once this has been achieved, the low-dimensional reduced dynamics can be analyzed in detail. Due to the invariance of the SSM $\mathcal{W}_{\bm \gamma_\epsilon}(\mathcal{E})$, any periodic orbit, invariant torus, and bifurcation that is observed in eq.~\eqref{eq:RedDyn} also exists in the full system and can be mapped back using the SSM parametrization $\bm{W}_\epsilon$ to the full system~\eqref{eq:DS}.
	
	\section{SSM Computation}
	\label{sec:CompSSM}
	
	In this section, we address how the SSM parametrization $\bm{W}_\epsilon$ and the reduced vector field $\bm{R}_\epsilon$ can be efficiently computed up to arbitrary orders of accuracy for general mechanical systems. We will merely require knowledge of the eigenvectors and eigenvalues of the underlying master modal subspace over which the SSM is constructed. All computations will be carried out in physical coordinates, without the need of a modal transformation. This will enable the treatment of large dynamical systems arising, e.g., from finite-element models.
	
	To eliminate the redundancy that would be introduced by a tensor-based notation for the expansions, we compute the SSM parametrization and the reduced dynamics with the use of multi-index notation (see Appendix \ref{App:TensorRedundancy} and Ref. \onlinecite{Ponsioen2020}). This optimizes the memory requirements for storing and manipulating the expansion coefficients and reduces the number of computations to be performed. We will present results for both the autonomous and nonautonomous cases. The general treatment for first-order ODEs that do not originate from an underlying second-order mechanical system is given in Appendix~\ref{app_sec:SSMComp_1storder}. The complete algorithm and the implementation of the examples treated here are openly available in the MATLAB-based software package SSMTool 2.4~\cite{SSMTool2}. 
	
	In summary, our treatment here extends the available results~\cite{Ponsioen2020,Jain2021,Vizzaccaro2022,Opreni2022} on automated computation of SSMs. Specifically, we combine the advantages of a multi-index approach \cite{Ponsioen2020}, yet use physical coordinates~\cite{Jain2021}. We compute the nonautonomous SSM up to arbitrary orders of approximation and exploit a special structure inherent in phase space for second-order systems~\cite{Vizzaccaro2022, Opreni2022}. We also provide the theoretical foundation for this special structure (see Lemmas~\ref{lem:HomAutVel},~\ref{Lem:HomNonautVel}), allowing extension of its advantages to any second-order system. At the same time, we consider more general systems, including direct parametric excitation, and geometric and system nonlinearities of arbitrary polynomial order including nonlinear damping.
	
	\subsection{SSM parametrization}
	For a second-order system, the phase space variables $\bm{z} =  (\bm{y}, \dot{\bm{y}})^T$ are inherently connected as the velocity components are simply the time derivative of the displacement variables. Naturally, this relationship carries over to the SSM parametrization ${\bm W}_\epsilon(\bm p,\bm\phi)$, which maps to the phase space. This allows us to express the SSM parametrization as
	\begin{align}
		\label{eq:SSM_param_struct}
		{\bm W}_\epsilon(\bm p,\bm\phi) 
		= 
		\begin{bmatrix}
			{\bm w}_\epsilon(\bm p,\bm\phi) \\
			\dot{ {\bm w}}_\epsilon(\bm p,\bm\phi)
		\end{bmatrix},
	\end{align}
	where ${\bm w}_\epsilon$ and $\dot{ {\bm w}}_\epsilon$ denote the parametrizations for the displacement and the velocity variables. The smooth dependence of the SSM $\mathcal{W}_{\bm \gamma_\epsilon}(\mathcal{E})$ on $\epsilon$ allows us to further expand these parametrizations in $\epsilon$ as
	\begin{align}
		\label{eq:ExpandW_e1}
		{\bm W}_\epsilon(\bm p,\bm\phi) &= \bm W (\bm p ) + \epsilon \bm{X} (\bm p,\bm \phi) + \cdots\\ \label{eq:ExpandW_e}
		&= \begin{bmatrix}
			\bm w (\bm p )\\
			\dot{\bm w} (\bm p )
		\end{bmatrix} 
		+ \epsilon \begin{bmatrix}
			\bm{x} (\bm p,\bm \phi)\\
			\dot{\bm x} (\bm p,\bm \phi)
		\end{bmatrix}
		+ \dots.
	\end{align}
	
	%
	Similarly, we expand the reduced dynamics parametrization as 
	\begin{align}\label{eq:ExpandR_e}
		{\bm R}_\epsilon \bm(p,\bm\phi) =
		\bm R ( \bm p )+ \epsilon \bm S (\bm p,\bm \phi) + O(\epsilon^2).
	\end{align}
	We will show that, in order to compute the SSM for second-order systems, it suffices to solve the invariance equation for the displacement parametrization ${\bm w}_\epsilon$ only. Subsequently, the velocity parametrization $\dot{ {\bm w}}_\epsilon$ can be analytically reconstructed by differentiating ${\bm w}_\epsilon$ with respect to time. 
	
	
	\subsection{Autonomous SSM ($\varepsilon=0$)}
	In order to compute autonomous SSMs, the \textit{autonomous invariance equation} 
	\begin{align}\label{eq:InvEq_aut}
		\bm{B} (\text{D}\bm{W}(\bm{p}) ) \bm{R}(\bm{p}) 
		= 
		\bm{A}\bm{W}(\bm{p}) + \bm{F} (\bm{W}(\bm{p}))
	\end{align}
	must be solved. This equation is obtained by substituting the expansions~\eqref{eq:ExpandW_e} and \eqref{eq:ExpandR_e} into the invariance equation~\eqref{eq:InvEq} and collecting terms at $\mathcal{O}(\epsilon^0)$. As we have noted, expansions of the SSM parametrization and the reduced dynamics will be written using multi-indices. A multi-index $\bm{m}$ is a vector $\bm{m} \in \mathbb{N}^M$ for which addition, subtraction and other operations are defined elementwise. With the help of multi-indices, multivariate monomials can be written as $ \bm{p}^{\bm m} = p_1^{m_1}\dots p_M^{m_M}$. 
	
	The autonomous parametrizations of the SSM are expanded using these multi-indices
	\begin{align}\label{eq:AutSSMParam}
		\bm{w}(\bm{p}) 
		&=  \sum_{\bm{m}\in \mathbb{N}^M} \bm{w}_{\bm{m}} \bm{p}^{\bm m},
		\\ \label{eq:AutSSMParamVel}
		\dot{\bm{w}}(\bm{p}) 
		&=  \sum_{\bm{m}\in \mathbb{N}^M} \dot{\bm{w}}_{\bm{m}} \bm{p}^{\bm m}
	\end{align}
	where $\bm{w}_{\bm m} , \bm{\dot{w}}_{\bm m} \in \mathbb{C}^n$. As we elaborate in Appendix~\ref{App:TensorRedundancy}, when compared to tensor-based approaches\cite{Jain2021, Vizzaccaro2022, Opreni2022}, this notation results in a drastic reduction in the number of systems of equations that need to be solved, depending on the SSM dimension and the order of approximation. For instance, in the case of two-dimensional SSMs, we observe an exponentially lower number of equations for multi-index notation relative to the tensor-notation as the order of approximation increases. 
	
	The autonomous reduced vector field  $\bm{R}\bm{(p)}$ is also expanded using multi-index notation as
	\begin{align}\label{eq:AutRDParam}
		\bm{R}(\bm{p}) 
		&=  \sum_{\bm{m}\in \mathbb{N}^M} \bm{R}_{\bm{m}} \bm{p}^{\bm m},
		\ \bm{R}_{\bm{m}} = 
		\begin{bmatrix}
			R^1_{\bm{m}}\\ \vdots \\  R^{M}_{\bm{m}}
		\end{bmatrix} 
	\end{align}
	Substituting the expansions~\eqref{eq:AutSSMParam},\eqref{eq:AutSSMParamVel} and \eqref{eq:AutRDParam} into the autonomous invariance equation \eqref{eq:InvEq_aut} results in linear system of equations that can be solved recursively at each order for the unknown coefficients $\bm{W}_{\bm{m}}, \bm{R}_{\bm{m}}$. These equations are decoupled for distinct multi-indices at each order. Furthermore, the splitting~\eqref{eq:AutSSMParam} of the parametrization yields a single $n$-dimensional system of linear equations that can be solved for the displacement coefficients $\bm{w}_{\bm{m}}$ according to the following statement.
	\begin{lemma} \label{lem:HomAut}
		The autonomous invariance equation~\eqref{eq:InvEq_aut} for a multi-index $\bm$ can be written as 
		\begin{align} \label{eq:Hom_Aut_Second}
			\underbrace{
				\bigg(
				\bm{K}  
				+ \Lambda_{\bm m} \bm{C} 
				+ \Lambda_{\bm m}^2 \bm{M}
				\bigg)
			}_{:= \bm{L}_{\bm m}}
			\bm{w}_{\bm m}
			=
			\bm{D}_{\bm m} 	\bm{R}_{\bm m}
			+ 
			\bm{C_m},
		\end{align}
		where $\bm{C}_{\bm m}\in \mathbb{C}^{n}$, $\bm{D}_{\bm m}\in \mathbb{C}^{n\times M}$ are defined in Appendix~\ref{app_sec:SSMComp_2ndorder}, and $\Lambda_{\bm m} = \bm \Lambda \cdot \bm{m}$ with $\bm{\Lambda} = (\lambda_1, ..., \conj{\lambda}_{M/2}) $.
	\end{lemma}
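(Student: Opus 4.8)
The plan is to exploit the block structure of the first-order form \eqref{eq:choiceDS} to collapse the $N=2n$-dimensional invariance equation \eqref{eq:InvEq_aut} onto an $n$-dimensional equation for the displacement coefficients alone, and then to read off the coefficient of each monomial $\bm p^{\bm m}$. First I would substitute the block forms of $\bm A$, $\bm B$, $\bm F$ together with the split parametrization \eqref{eq:ExpandW_e} into \eqref{eq:InvEq_aut}. Writing $(\text{D}\bm W)\bm R = ((\text{D}\bm w)\bm R,\,(\text{D}\dot{\bm w})\bm R)^{T}$ and multiplying by $\bm B$, the lower block reads $\bm M(\text{D}\bm w)\bm R = \bm M\dot{\bm w}$; since $\bm M$ is invertible this yields the identity $\dot{\bm w} = (\text{D}\bm w)\bm R$, i.e. $\dot{\bm w}$ is nothing but the derivative of $\bm w$ along the reduced flow $\dot{\bm p}=\bm R(\bm p)$. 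Substituting this into the upper block and using $(\text{D}\dot{\bm w})\bm R = \ddot{\bm w}$ turns \eqref{eq:InvEq_aut} into the second-order invariance equation $\bm M\ddot{\bm w} + \bm C\dot{\bm w} + \bm K\bm w + \bm f(\bm w,\dot{\bm w}) = \bm 0$, where the dots denote differentiation along $\bm R$. This already justifies solving only for $\bm w$ and reconstructing $\dot{\bm w}$ afterwards.

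Next I would analyze how the reduced-flow derivative $\mathcal D := (\text{D}(\cdot))\bm R$ acts on monomials. Splitting $\bm R = \bm R^{(1)} + \bm R^{\text{nl}}$ into its linear part $R^{(1)}_j(\bm p) = \lambda_j p_j$ (diagonal by the biorthonormal normalization \eqref{eq:Normalisation}) and an order-$\ge 2$ remainder, the linear piece $\mathcal D_0 = \sum_j \lambda_j p_j\,\partial_{p_j}$ acts diagonally, $\mathcal D_0\,\bm p^{\bm m} = \Lambda_{\bm m}\,\bm p^{\bm m}$ with $\Lambda_{\bm m}=\bm\Lambda\cdot\bm m$, and hence $\mathcal D_0^2\,\bm p^{\bm m} = \Lambda_{\bm m}^2\,\bm p^{\bm m}$, whereas $\mathcal D_{\text{nl}}$ strictly raises the polynomial degree. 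I would then collect the coefficient of $\bm p^{\bm m}$ in the second-order invariance equation. The only contributions proportional to the unknown $\bm w_{\bm m}$ come from the degree-preserving action $\mathcal D_0$ (and $\mathcal D_0^2$) on $\bm w_{\bm m}\bm p^{\bm m}$, so $\bm K\bm w$, $\bm C\dot{\bm w}$ and $\bm M\ddot{\bm w}$ contribute $\bm K$, $\Lambda_{\bm m}\bm C$ and $\Lambda_{\bm m}^2\bm M$ respectively, assembling into $\bm L_{\bm m}\bm w_{\bm m}$. The terms proportional to the unknown $\bm R_{\bm m}$ enter only when $\mathcal D_{\text{nl}}$ supplies its degree-$|\bm m|$ part $\bm R_{\bm m}\bm p^{\bm m}$ against the constant (tangent-space) part of $\text{D}\bm w$ and $\text{D}\dot{\bm w}$; these collect into $\bm D_{\bm m}\bm R_{\bm m}$. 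Everything else — the degree-$|\bm m|$ coefficient of $\bm f(\bm w,\dot{\bm w})$, which depends only on lower-degree coefficients because $\bm f$ collects the nonlinear (at least quadratic) terms, together with all $\mathcal D_{\text{nl}}$-contributions built from $\bm w_{\bm n},\bm R_{\bm n}$ with $|\bm n|<|\bm m|$ — is already known from the recursion and forms the inhomogeneity $\bm C_{\bm m}$. This is exactly \eqref{eq:Hom_Aut_Second}. As a consistency check, $\bm L_{\bm m}$ coincides with the quadratic eigenvalue operator in \eqref{eq:EigenprobSecondOrder} evaluated at $\lambda = \Lambda_{\bm m}$, so $\bm L_{\bm m}$ becomes singular precisely at the inner resonances, matching the known non-resonance requirements.

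The main obstacle is the bookkeeping for $\ddot{\bm w} = \mathcal D^2\bm w$. Expanding $\mathcal D^2 = \mathcal D_0^2 + \mathcal D_0\mathcal D_{\text{nl}} + \mathcal D_{\text{nl}}\mathcal D_0 + \mathcal D_{\text{nl}}^2$, I must verify that at the monomial $\bm p^{\bm m}$ only $\mathcal D_0^2$ reaches the unknown $\bm w_{\bm m}$ (yielding the clean factor $\Lambda_{\bm m}^2$), that the mixed terms reproduce the $\bm R_{\bm m}$-dependence absorbed into $\bm D_{\bm m}$ without spurious $\bm w_{\bm m}$-dependence, and that every remaining coefficient is fixed by strictly lower orders so that the recursion closes. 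Carrying out this degree counting carefully — and tracking the constant part of $\text{D}\dot{\bm w}$, which carries an extra factor of the eigenvalues — is what pins down the explicit forms of $\bm D_{\bm m}$ and $\bm C_{\bm m}$ deferred to Appendix~\ref{app_sec:SSMComp_2ndorder}.
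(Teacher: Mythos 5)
Your proposal is correct and follows essentially the same route as the paper's Appendix~\ref{app_sec:SSMComp_2ndorder} proof: the lower block of the first-order invariance equation yields $\dot{\bm w}=(\text{D}\bm w)\bm R$ (the paper's eq.~\eqref{app_eq:Relation_Aut} at coefficient level), which is substituted into the upper block, and collecting the degree-$|\bm m|$ coefficient produces $\bm L_{\bm m}$, $\bm D_{\bm m}$ and $\bm C_{\bm m}$ exactly as you describe, with your $\mathcal D_0/\mathcal D_{\text{nl}}$ splitting being a clean repackaging of the paper's explicit sums $\bm V_{\bm m},\bm Y_{\bm m}$. The only minor imprecision is that the diagonal linear reduced dynamics $R^j_{\bm e_i}=\delta_{ij}\lambda_j$ is a choice made when solving the leading-order equation (together with $\bm w_{\bm e_j}=\bm\phi_j$), not a consequence of the normalization~\eqref{eq:Normalisation} alone.
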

	\begin{proof}
		See Appendix~\ref{app_sec:SSMComp_2ndorder}.
	\end{proof}

	In system~\eqref{eq:Hom_Aut_Second}, the matrix $\bm{D_m}$ is computed solely based on the eigenvalues and eigenvectors associated to the master subspace, whereas $\bm{C_m}$ contains contributions from the lower-order terms in the expansion of the SSM and its reduced dynamics. Based on Lemma~\ref{lem:HomAut}, the system~\eqref{eq:Hom_Aut_Second} can be solved recursively up to any arbitrary order for the autonomous coefficients $\bm{w}_{\bm m}, \bm{R}_{\bm m}$ of the SSM parametrization and the reduced dynamics. At any order $m$, we have $z_m = \binom{m+M-1}{M-1}$ distinct multi-indices in the expansions~\eqref{eq:AutSSMParam}, \eqref{eq:AutRDParam}. Thus, at each order, we need to solve $z_m$ systems of $n$-dimensional linear equations. Indeed, for real dynamical systems, symmetries allow for further reduction in the number of equations, as we elaborate in Appendix~\ref{app_sec:SSMComp_1storder}. 
	
	\begin{remark}
		In case of near-resonances of the form
		\begin{align} \label{eq:ResCond_aut}
			\lambda_i \approx \bm{\Lambda}_{\bm{m}},
		\end{align}
		the coefficient matrix $\bm L_{\bm m}$ will be nearly singular~\cite{Jain2021}.
		If the resonance condition~\eqref{eq:ResCond_aut} is fulfilled for any eigenvalue $\lambda_i\in\text{Spec} (\mathcal{E})$ then a nonzero choice for $\bm{R}_{\bm m}$ is necessary to solve the linear system~\eqref{eq:Hom_Aut_Second}, where different choices lead to different styles of parametrization \cite{Haro2016,Jain2021}. 
	\end{remark}
	
	To understand the resonance condition~\eqref{eq:ResCond_aut}, consider a two-dimensional SSM tangent to a modal subspace spanned by lightly damped (complex-conjugate) modes with associated eigenvalues $\lambda$ and $\conj{\lambda}$ satisfying
	\begin{align}
		\lambda \approx l \lambda + (l-1) \conj{\lambda}
	\end{align}
	for all small enough $l \in \mathbb{N}^+$. This relationship holds for the pair $(\lambda,\conj\lambda)$ if its real part is small, i.e., underlying physical system is lightly damped. In this case, for any multi-index of the form $\bm{m} =  \begin{bmatrix} l,  & l-1 \end{bmatrix} $, the coefficient matrix $\bm{L}_{\bm m}$ in system~\eqref{eq:Hom_Aut_Second} turn out to be nearly singular.

	Finally, once we have computed the coefficients $\bm w_{\bm m}$ for the displacement parametrization $\bm{w(p)}$, the coefficients for the velocity parametrization can be directly obtained using the following statement.
	\begin{lemma} \label{lem:HomAutVel}
		The coefficients of the velocity parametrization corresponding to a multi-index $\bm{m}$ can be obtained directly from the displacement parametrization as
		\begin{align}
			\dot{\bm w}_{\bm m} = [\text{D}\bm w(\bm p) \bm R (\bm p)]_{\bm m} 
		\end{align}
	\end{lemma}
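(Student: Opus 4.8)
The plan is to extract the identity directly from the second block row of the autonomous invariance equation~\eqref{eq:InvEq_aut}, exploiting the special block structure of $\bm A$ and $\bm B$ in eq.~\eqref{eq:choiceDS}. The underlying intuition is the chain rule: along any trajectory on the autonomous SSM the reduced coordinate satisfies $\dot{\bm p} = \bm R(\bm p)$ (cf.~eq.~\eqref{eq:RedDyn} at $\epsilon=0$), so the displacement $\bm w(\bm p(t))$ has time derivative $\text{D}\bm w(\bm p)\bm R(\bm p)$, which must equal the velocity block $\dot{\bm w}(\bm p)$. Rather than rely on an a priori interpretation of the second block of $\bm W$ as a velocity, I would derive this purely algebraically, so that the argument rests only on~\eqref{eq:InvEq_aut} and the invertibility of the mass matrix $\bm M$.

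First I would substitute the block decomposition $\bm W = (\bm w, \dot{\bm w})^{T}$ from eq.~\eqref{eq:SSM_param_struct}, together with $\text{D}\bm W = (\text{D}\bm w, \text{D}\dot{\bm w})^{T}$, into~\eqref{eq:InvEq_aut}. Using $\bm B$ from eq.~\eqref{eq:choiceDS}, the left-hand side $\bm B(\text{D}\bm W)\bm R$ has second block row $\bm M(\text{D}\bm w)\bm R$. On the right-hand side, the second block row of $\bm A\bm W + \bm F(\bm W)$ is $\bm M\dot{\bm w}$, since the lower block of $\bm F$ vanishes and the lower-right block of $\bm A$ is $\bm M$. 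Equating the two second block rows gives $\bm M(\text{D}\bm w)\bm R = \bm M\dot{\bm w}$, and since $\bm M$ is invertible this yields the function-level identity $\dot{\bm w}(\bm p) = \text{D}\bm w(\bm p)\bm R(\bm p)$.

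It then remains only to pass from functions to multi-index coefficients. Expanding $\bm w$ and $\bm R$ as in eqs.~\eqref{eq:AutSSMParam} and~\eqref{eq:AutRDParam}, the product $\text{D}\bm w(\bm p)\bm R(\bm p)$ is a convolution of the two series, whose coefficient of $\bm p^{\bm m}$ is by definition $[\text{D}\bm w(\bm p)\bm R(\bm p)]_{\bm m}$; matching the coefficient of $\bm p^{\bm m}$ on both sides of the function identity then gives $\dot{\bm w}_{\bm m} = [\text{D}\bm w(\bm p)\bm R(\bm p)]_{\bm m}$. The only genuine, if modest, point to check is the index bookkeeping in this convolution: since $\partial_{p_j}\bm p^{\bm k} = k_j\bm p^{\bm k - \bm e_j}$, a term $\bm w_{\bm k}$ contributes to order $\bm m$ only through products with $\bm R_{\bm l}$ satisfying $\bm k + \bm l - \bm e_j = \bm m$, so that only coefficients of $\bm w$ and $\bm R$ of order at most $|\bm m|$ enter. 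This confirms that $\dot{\bm w}_{\bm m}$ is reconstructed recursively from data already available after solving Lemma~\ref{lem:HomAut}, with no additional linear system to solve --- which is the computational payoff the lemma is meant to deliver.
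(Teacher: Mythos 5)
Your proof is correct, but it takes a different route from the paper's designated proof of this lemma. The paper proves Lemma~\ref{lem:HomAutVel} by the chain rule: along reduced trajectories $\dot{\bm p}=\bm R(\bm p)$ (eq.~\eqref{eq:RedDyn} at $\epsilon=0$), so $\dot{\bm w}_{\bm m}=[\text{D}\bm w(\bm p)\dot{\bm p}]_{\bm m}=[\text{D}\bm w(\bm p)\bm R(\bm p)]_{\bm m}$, and it then only \emph{verifies} this against the second block of the invariance equation (eq.~\eqref{app_eq:Relation_Aut}). You invert this logic: you extract the identity directly from the second block row of~\eqref{eq:InvEq_aut}, using the block structure of $\bm A$, $\bm B$ and the vanishing lower block of $\bm F$ from eq.~\eqref{eq:choiceDS}, together with invertibility of $\bm M$, to get $\bm M(\text{D}\bm w)\bm R=\bm M\dot{\bm w}$ at the function level before matching multi-index coefficients. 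Your block computation is right, and the coefficient bookkeeping ($\bm u+\bm k-\bm e_j=\bm m$, hence only orders at most $|\bm m|$ enter because neither $\bm w$ nor $\bm R$ has constant terms) reproduces the paper's eq.~\eqref{eq:aut_vel}. What your route buys is rigor on a point the chain-rule argument glosses over: in eq.~\eqref{eq:SSM_param_struct} the symbol $\dot{\bm w}$ is a priori just a name for the second block of $\bm W$, and the fact that it coincides with the time derivative of $\bm w(\bm p(t))$ along reduced trajectories is precisely what must be proved; your derivation establishes it from the invariance equation alone, at the cost of explicitly assuming $\bm M$ invertible (which the paper needs anyway for~\eqref{eq:DS} to define a well-posed system). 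What the paper's route buys is the physical intuition — velocity is the time derivative of displacement — stated in one line; and the paper does carry out essentially your computation anyway, coefficientwise, in deriving eq.~\eqref{app_eq:Relation_Aut} from the second row of system~\eqref{app_eq:2ndOrderSys_AutHighOrderInvEq}, so the two arguments coexist there with the roles of proof and verification swapped relative to your write-up.
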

	\begin{proof}
		See Appendix~\ref{app_sec:SSMComp_2ndorder}.
	\end{proof}

	\subsection{Non-autonomous SSM ($\epsilon>0$)}
	In the non-autonomous setting, inserting the expansions~\eqref{eq:ExpandW_e},\eqref{eq:ExpandR_e} into the invariance equation~\eqref{eq:InvEq}, and collecting terms of 
	$\mathcal{O}(\epsilon)$ leads to the \textit{non-autonomous invariance equation}
	{\small
		\begin{align} \nonumber
			\bm{B}
			\bigg( 
			(\text{D}\bm{W}(\bm p) )\bm{S}(\bm p,\bm \phi)    
			+
			[\partial_{\bm p} \bm{X} (\bm p,\bm \phi)]  \bm{R} (\bm{p}) 
			+
			[\partial_{\bm\phi} \bm{X } (\bm p,\bm \phi)]  \bm\Omega 
			\bigg)
			=
			\\\label{eq:InvEq_nonaut}
			\bm{A}\bm{X}(\bm p,\bm \phi)  
			+	
			\big[\text{D}\bm{F} (\bm{W}(\bm p)) \big]\bm{X} (\bm p,\bm \phi)
			+
			\bm{G}( \bm{W}(\bm p,\bm \phi),\bm\phi), 
		\end{align}
	}
	which needs to be solved for the non-autonomous terms $\bm{X}$ and $\bm{S}$ in the parametrizations for the SSM and its reduced dynamics. As in the autonomous case, we now expand the nonlinear functions in Taylor series using multi-indices. The coefficients of these series, however, depend quasiperiodically on time. Hence, we futher expand these coefficients in temporal Fourier series to obtain
	\begin{align}\label{eq:expandNonAutSSM}
		\bm{x}(\bm{p},\bm\phi) 
		&=  \sum_{\bm{m}\in \mathbb{N}^M} \sum_{\bm\kappa \in \mathbb{Z}^k} \bm{x}_{\bm{m},\bm\kappa } e^{i \langle \bm\kappa , \bm\phi  \rangle} \bm{p}^{\bm m},
		\\ \label{eq:expandNonAutSSMVel}
		\dot{\bm{x}}(\bm{p},\bm\phi) 
		&=  \sum_{\bm{m}\in \mathbb{N}^M} \sum_{\bm\kappa \in \mathbb{Z}^k} \dot{\bm{x}}_{\bm{m},\bm\kappa } e^{i \langle \bm\kappa , \bm\phi  \rangle} \bm{p}^{\bm m}.
	\end{align}
	The reduced dynamics are similarly expanded in a Taylor-Fourier Series as
	\begin{align}
		\bm{S}(\bm{p},\bm\phi) 
		&=  \sum_{\bm{m}\in \mathbb{N}^M}     \sum_{\bm\kappa \in \mathbb{Z}^k} \bm{S}_{\bm{m},\bm\kappa } e^{i \langle \bm\kappa , \bm\phi  \rangle} \bm{p}^{\bm m} 
		, & \
		\bm{S}_{\bm{m},\bm\kappa}  
		&=  \begin{bmatrix}
			S^1_{\bm{m},\bm\kappa}\\ \vdots \\  S^{M}_{\bm{m},\bm\kappa}
		\end{bmatrix} .
		\label{eq:expandNonAutRD}
	\end{align}
	The procedure of determining these coefficients recursively is similar to that of the autonomous case. The contributions to distinct harmonics and to different multi-indices can be considered independently as the corresponding equations decouple. For second-order systems, the computation of these coefficients can again be carried out by first obtaining the displacement parametrization $\bm{x}(\bm{p},\bm\phi) $ via a reduced invariance equation of dimension $n$, followed by computing the velocity parametrization $\dot{\bm{x}}(\bm{p},\bm\phi) $ directly from the time derivative of the displacement terms, as we detail below.
	\begin{lemma} \label{Lem:HomNonaut}
		The non-autonomous invariance equation in terms of a multi-index $\bm{m}$ and harmonic $\bm \kappa$ is given by
		\begin{align}
			\label{eq:HomNonAut_Second}
			\underbrace{
				\big(
				\bm{K} 
				+
				\Lambda_{\bm{m}, \bm \kappa} \bm{C} 
				+
				\Lambda_{\bm{m}, \bm \kappa}^2  \bm{M}
				\big)
			}_{\bm L_{\bm{m},\bm \kappa}}
			\bm{x}_{\bm{m}, \bm \kappa }
			=
			\bm{D}_{\bm{m},\bm\kappa}
			\bm{S}_{\bm{m},\bm\kappa}
			+ 
			\bm{C}_{\bm{m}, \bm \kappa},
		\end{align}
		$\bm{C}_{\bm{m},\bm\kappa}\in \mathbb{C}^{n}$, $\bm{D}_{\bm{m},\bm\kappa}\in \mathbb{C}^{n\times M}$ are defined in Appendix~\ref{app_sec:SSMComp_2ndorder}, and $\Lambda_{\bm{m}, \bm \kappa} := \bm \Lambda \cdot \bm{m} + i \bm \Omega \cdot \bm \kappa $. 
	\end{lemma}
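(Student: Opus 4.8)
The plan is to mirror the autonomous proof of Lemma~\ref{lem:HomAut}, exploiting the block structure of $\bm A$, $\bm B$, $\bm F$, $\bm G$ from eq.~\eqref{eq:choiceDS} together with the $(\bm w,\dot{\bm w})$, $(\bm x,\dot{\bm x})$ splitting of the parametrization in eq.~\eqref{eq:ExpandW_e}. First I would substitute these block forms into the non-autonomous invariance equation~\eqref{eq:InvEq_nonaut} and separate its displacement and velocity rows. Abbreviating the bracketed left-hand expression as $\bm V := (\text{D}\bm W)\bm S + (\partial_{\bm p}\bm X)\bm R + (\partial_{\bm\phi}\bm X)\bm\Omega$ and writing $\bm V = (\bm V_1,\bm V_2)^\top$ in its two $n$-blocks, the product $\bm B\bm V$ has top block $\bm C\bm V_1 + \bm M\bm V_2$ and bottom block $\bm M\bm V_1$, whereas the right-hand side of eq.~\eqref{eq:InvEq_nonaut} has top block $-\bm K\bm x - \text{D}\bm f(\bm W)\bm X + \bm g(\bm\phi,\bm w,\dot{\bm w})$ and bottom block $\bm M\dot{\bm x}$, where $\text{D}\bm f(\bm W)$ is the Jacobian of the nonlinearity evaluated on the autonomous manifold.

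The bottom block then forces $\bm V_1 = \dot{\bm x}$, i.e. $(\text{D}\bm w)\bm S + (\partial_{\bm p}\bm x)\bm R + (\partial_{\bm\phi}\bm x)\bm\Omega = \dot{\bm x}$, which is exactly the $\mathcal O(\epsilon)$ velocity relation of Lemma~\ref{Lem:HomNonautVel} and identifies $\bm V_1$ as the time derivative of $\bm x$ along the combined reduced flow $(\dot{\bm p},\dot{\bm\phi}) = (\bm R,\bm\Omega)$. Differentiating once more along the same flow identifies $\bm V_2$ as the corresponding second derivative $\ddot{\bm x}$. Substituting $\bm V_1 = \dot{\bm x}$ into the top block collapses the two equations into the single second-order forced variational equation $\bm M\ddot{\bm x} + \bm C\dot{\bm x} + \bm K\bm x + \text{D}\bm f(\bm W)\bm X = \bm g(\bm\phi,\bm w,\dot{\bm w})$, the exact non-autonomous analogue of the equation $\bm M\ddot{\bm w} + \bm C\dot{\bm w} + \bm K\bm w = -\bm f$ underlying Lemma~\ref{lem:HomAut}.

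It remains to project onto a fixed Taylor--Fourier monomial $e^{i\langle\bm\kappa,\bm\phi\rangle}\bm p^{\bm m}$ using the expansions~\eqref{eq:expandNonAutSSM}--\eqref{eq:expandNonAutRD}. The essential computation, and the only genuinely new ingredient relative to Lemma~\ref{lem:HomAut}, is that differentiation along the \emph{linearized} flow $\dot{\bm p} = \bm\Lambda\bm p$, $\dot{\bm\phi} = \bm\Omega$ acts on this monomial as multiplication by $\bm\Lambda\cdot\bm m + i\bm\Omega\cdot\bm\kappa = \Lambda_{\bm m,\bm\kappa}$; the $\partial_{\bm\phi}$ term supplies precisely the $i\bm\Omega\cdot\bm\kappa$ contribution that is absent in the autonomous case. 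Consequently $\dot{\bm x}$ and $\ddot{\bm x}$ contribute $\Lambda_{\bm m,\bm\kappa}\bm x_{\bm m,\bm\kappa}$ and $\Lambda_{\bm m,\bm\kappa}^2\bm x_{\bm m,\bm\kappa}$ at the $(\bm m,\bm\kappa)$ mode, so collecting the terms proportional to $\bm x_{\bm m,\bm\kappa}$ yields exactly $\bm L_{\bm m,\bm\kappa} = \bm K + \Lambda_{\bm m,\bm\kappa}\bm C + \Lambda_{\bm m,\bm\kappa}^2\bm M$. The leading (eigenvector) part of $\text{D}\bm w$ multiplying $\bm S_{\bm m,\bm\kappa}$ assembles the source $\bm D_{\bm m,\bm\kappa}\bm S_{\bm m,\bm\kappa}$, and everything else is gathered into $\bm C_{\bm m,\bm\kappa}$.

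The main obstacle is purely the bookkeeping of this last separation rather than any conceptual difficulty. One must verify that only the linear part $\bm\Lambda\bm p$ of $\bm R$ keeps the $(\bm m,\bm\kappa)$ mode fixed, whereas the nonlinear part of $\bm R$, the higher-order part of $\text{D}\bm w$, and the Jacobian $\text{D}\bm f(\bm W)$ (which vanishes at the origin and is therefore at least linear in $\bm w,\dot{\bm w}$) shift only strictly lower-order data into the $(\bm m,\bm\kappa)$ slot and thus belong to $\bm C_{\bm m,\bm\kappa}$; likewise the Taylor--Fourier coefficients of the forcing $\bm g(\bm\phi,\bm w,\dot{\bm w})$ feed only into $\bm C_{\bm m,\bm\kappa}$. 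Because $\text{D}\bm f(\bm W)$ contributes nothing at the resonant mode, $\bm L_{\bm m,\bm\kappa}$ depends only on $\bm M,\bm C,\bm K$ and $\Lambda_{\bm m,\bm\kappa}$, as claimed; the explicit forms of $\bm D_{\bm m,\bm\kappa}$ and $\bm C_{\bm m,\bm\kappa}$ are precisely what is recorded in Appendix~\ref{app_sec:SSMComp_2ndorder}.
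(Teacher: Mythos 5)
Your proposal is correct and follows essentially the same route as the paper's proof in Appendix~\ref{app_sec:SSMComp_2ndorder}: exploit the block structure of $\bm A,\bm B,\bm F,\bm G$, read off the velocity relation of Lemma~\ref{Lem:HomNonautVel} from the bottom $n$ rows, substitute it into the top rows to reduce to an $n$-dimensional equation, and observe that differentiation of $e^{i\langle\bm\kappa,\bm\phi\rangle}\bm p^{\bm m}$ along the linearized flow produces the multiplier $\Lambda_{\bm m,\bm\kappa}$ and hence $\bm L_{\bm m,\bm\kappa}=\bm K+\Lambda_{\bm m,\bm\kappa}\bm C+\Lambda_{\bm m,\bm\kappa}^2\bm M$. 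The only cosmetic difference is ordering: you collapse to the second-order variational equation $\bm M\ddot{\bm x}+\bm C\dot{\bm x}+\bm K\bm x+\mathrm{D}\bm f(\bm W)\bm X=\bm g$ before projecting onto the $(\bm m,\bm\kappa)$ monomial, whereas the paper projects first (obtaining the $2n$-dimensional coefficient system~\eqref{app_eq:2ndOrderSytemNonAutinvEq}) and then eliminates $\dot{\bm x}_{\bm m,\bm\kappa}$ via~\eqref{app_eq:Hom_eq_nonaut_withvelocity}; your bookkeeping of which terms land in $\bm D_{\bm m,\bm\kappa}$ versus $\bm C_{\bm m,\bm\kappa}$ matches the paper's definitions.
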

	\begin{proof}
		See Appendix~\ref{app_sec:SSMComp_2ndorder}.
	\end{proof}
	In system~\eqref{eq:HomNonAut_Second}, the matrix $\bm{D}_{\bm{m},\bm\kappa}$ depends on the eigenvalues and eigenvectors of the master subspace as well as on the forcing frequency $\bm \Omega$. The vector $\bm{C}_{\bm{m}, \bm \kappa}$ contains contributions from lower-order terms in the parametrization of the SSM and its reduced dynamics. The invariance system~\eqref{eq:HomNonAut_Second} contains decoupled equations for each pair of multi-indices and harmonics $(\bm{m}, \bm \kappa)$. Thus, for $z_m$ distinct multi-indices and $z_K$ distinct harmonics, we must solve $z_m z_K$ systems of $n-$dimensional linear equations. 
	
	
	\begin{remark}
		In case of near-resonances of the form
		\begin{align} \label{eq:ResCond_nonaut}
			\lambda_i \approx \bm \Lambda \cdot \bm{m} + i \bm \Omega \cdot \bm \kappa 
		\end{align}
		the coefficient matrix $\bm L_{\bm{m},\bm \kappa}$ will be nearly singular~\cite{Jain2021}.
		If the resonance condition \eqref{eq:ResCond_aut} is fulfilled for any eigenvalue $\lambda_i\in\text{Spec} (\mathcal{E})$ then a nonzero choice for $\bm{S}_{\bm{m},\bm\kappa}$ is necessary to solve the linear system~\eqref{eq:Hom_Aut_Second}, where different choices lead to different styles of parametrization \cite{Haro2016,Jain2021}.
	\end{remark}
	As an example of the resonance condition~\eqref{eq:ResCond_nonaut}, consider a two-dimensional periodic SSM. At the leading order, the resonance condition then reduces to $\lambda \approx i\Omega$, which is a simple resonance created by the external periodic excitation. Higher-order terms lead to more complicated resonances which are able to capture the effects of parametric resonance. For instance, the a principal parametric resonance of the form $\Omega \approx 2 \text{Im}(\lambda)$ results in
	\begin{align}
		\lambda \approx \conj{\lambda} + i\Omega
	\end{align}
	in terms of relationship~\eqref{eq:ResCond_nonaut}.
	Finally, once we have computed the coefficients $\bm x_{\bm m,\kappa}$ for the displacement parametrization $\bm{x(p,\phi)}$, the coefficients for the nonautonomous velocity parametrization can be obtained directly using the following statement (cf.~Lemma~\ref{lem:HomAutVel}).
	
	\begin{lemma} \label{Lem:HomNonautVel}
		The coefficients of the nonautonomous velocity parametrization $\dot{\bm{x}}_{\bm{m},\bm\kappa }$ for a multi-index $\bm{m}$ and harmonic $\bm{\kappa}$ can be computed directly from the displacement parametrization as 
		
		\begin{align}
			\dot{\bm{x}}_{\bm{m},\bm\kappa } = [\text{D}\bm w(\bm p) \cdot \bm S +  \partial_{\bm p}\bm x \cdot \bm R + \partial_{\bm \phi}\bm x\cdot \bm \Omega ]_{\bm m,\bm \kappa}
		\end{align}
	\end{lemma}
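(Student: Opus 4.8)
The plan is to exploit the second-order block structure of the phase space together with the chain rule, exactly as in the autonomous Lemma~\ref{lem:HomAutVel}, and then to read off the result one Taylor--Fourier coefficient at a time. The starting point is the observation that, since $\bm{z}=(\bm{y},\dot{\bm{y}})^T$ and the SSM parametrization inherits this structure through eq.~\eqref{eq:SSM_param_struct}, the velocity block $\dot{\bm w}_\epsilon$ is nothing but the genuine time derivative of the displacement block $\bm w_\epsilon$ along reduced trajectories. Using the reduced dynamics~\eqref{eq:RedDyn}, $\dot{\bm p}=\bm R_\epsilon$ and $\dot{\bm\phi}=\bm\Omega$, the chain rule gives the function-level identity
\begin{align}
	\dot{\bm w}_\epsilon(\bm p,\bm\phi) = (\partial_{\bm p}\bm w_\epsilon)\,\bm R_\epsilon + (\partial_{\bm\phi}\bm w_\epsilon)\,\bm\Omega .
\end{align}

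Next I would insert the $\epsilon$-expansions~\eqref{eq:ExpandW_e} and~\eqref{eq:ExpandR_e} into this identity, recalling that the autonomous displacement parametrization $\bm w$ is independent of the phase $\bm\phi$, so that $\partial_{\bm\phi}\bm w=\bm 0$. Collecting the terms of $\mathcal{O}(\epsilon^0)$ reproduces Lemma~\ref{lem:HomAutVel}, while the terms of $\mathcal{O}(\epsilon)$ yield the function-level relation
\begin{align}
	\dot{\bm x}(\bm p,\bm\phi) = \text{D}\bm w(\bm p)\,\bm S + (\partial_{\bm p}\bm x)\,\bm R + (\partial_{\bm\phi}\bm x)\,\bm\Omega .
\end{align}

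Finally I would expand both sides in the Taylor--Fourier basis $\{e^{i\langle\bm\kappa,\bm\phi\rangle}\bm p^{\bm m}\}$ used in eqs.~\eqref{eq:expandNonAutSSM}--\eqref{eq:expandNonAutRD}. Because these basis functions are linearly independent, matching the coefficient of $e^{i\langle\bm\kappa,\bm\phi\rangle}\bm p^{\bm m}$ on each side is legitimate: the left-hand side contributes $\dot{\bm x}_{\bm m,\bm\kappa}$, and the right-hand side contributes precisely the bracketed quantity $[\,\text{D}\bm w(\bm p)\cdot\bm S + \partial_{\bm p}\bm x\cdot\bm R + \partial_{\bm\phi}\bm x\cdot\bm\Omega\,]_{\bm m,\bm\kappa}$, which is the claimed formula.

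The computation is elementary once the structural identity is in place; the only real subtlety, and hence the step I would be most careful about, is the first one: justifying that the velocity block of $\bm W_\epsilon$ coincides with the total time derivative of the displacement block. This rests entirely on the specific first-order embedding~\eqref{eq:choiceDS} of the second-order system, which ties $\dot{\bm y}$ to $\bm y$; for a general first-order system (Appendix~\ref{app_sec:SSMComp_1storder}) no such velocity--displacement splitting exists and the lemma has no analogue. The extraction of coefficients, while requiring convolutions of the multi-index and Fourier indices, is routine bookkeeping that does not affect the structure of the result.
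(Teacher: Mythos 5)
Your proposal is correct and follows essentially the same route as the paper: the chain rule applied to the displacement parametrization along reduced trajectories, collection of the $\mathcal{O}(\epsilon)$ terms, and extraction of the coefficient of $e^{i\langle\bm\kappa,\bm\phi\rangle}\bm p^{\bm m}$. The paper additionally cross-checks the resulting expression against the velocity relation obtained directly from the nonautonomous invariance equation, but this is a consistency verification rather than a different argument.
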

	
	\begin{proof}
		See Appendix~\ref{app_sec:SSMComp_2ndorder}
	\end{proof}
	
	Prior works~\cite{Vizzaccaro2022,Opreni2022} identified a special structure in the invariance equations of second-order mechanical systems with proportional damping and up to cubic-order geometric nonlinearities. This special structure led to reduced computational costs by halving the number of invariance equations relative to a first-order setting. We have shown that such a reduction in the number of equations arises from the underlying structure~\eqref{eq:SSM_param_struct} of the phase space variables in any second-order system. Indeed, according to Lemmas~\ref{lem:HomAut}-\ref{Lem:HomNonautVel}, these computational advantages apply to general second-order systems with nonlinearities of arbitrary polynomial order, including nonlinear damping.
	\section{Stability Diagrams and SSM}\label{sec:SDSSM}
	\subsection{Mathematical Foundation}\label{sec:MathSD}

	In this section, we consider mechanical systems subject to periodic parametric excitation, which means that $\bm{f}^{ext} = \bm{0}$ in eq.~\eqref{eq:forcing}. For such systems, $\bm{z=0}$ remains a fixed point for system~\eqref{eq:DS_FirstOrder} even for $\epsilon>0$. This fixed point may, however, be destabilized by resonances between the eigenfrequencies of the linear system and the parametric excitation frequency. 
	
	In order to study the stability of this fixed point for $\epsilon>0$, the dynamical system \eqref{eq:DS_FirstOrder} can be extended to an autonomous system of variables $(\bm{z}, \tau) \in \mathbb{R}^N \times S^1$ such that the trivial fixed point can be interpreted as the periodic orbit $(\bm{z}, \tau ) = (\bm{0}, t \ \text{mod} \ 2\pi )$\cite{Champneys2013}. Changes in the stability of this periodic orbit result in bifurcations with respect to parameters. Tools such as the \texttt{po} toolbox of the continuation package \textsc{coco}~\cite{Dankowicz2013} can detect and continue families of such bifurcations using Floquet theory. We refer to Dankowicz and Schilder~\cite{Dankowicz2013}) for a detailed account. Numerical continuation for finding an initial bifurcation that characterizes the stability boundary may be carried out by varying either the forcing frequency or the amplitude, as shown in Figure~\ref{fig:StabDiag}.
	\begin{figure}[ht]
		\centering
		\includegraphics[width=0.4\textwidth]{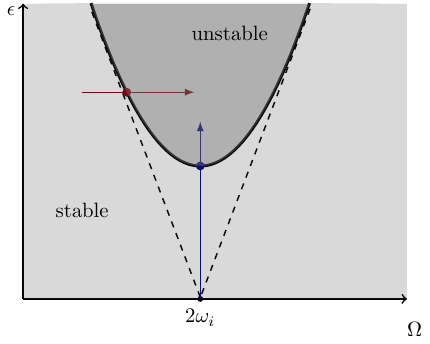}
		\caption{Sketch of a stability diagram at the principal resonance. The thin dashed line corresponds to the boundary of stable and unstable regions in the conservative limit. When damping is added, the tongue starts to lift up, bounded by the thick black line. The numerical continuation in the variables $\epsilon$ (blue arrow) and $\Omega$ (red arrow), respectively, leads to the detection of a bifurcation indicated by the blue and red dots. Choosing these bifurcations as a starting point, the family of bifurcations on the boundary can then be found via further continuation.}
		\label{fig:StabDiag}
	\end{figure}
	We employ SSM computation in order to efficiently obtain the \emph{resonance tongue} of the principal resonance, which marks the stability boundary in Figure~\ref{fig:StabDiag}. The SSM is constructed over the spectral subspace corresponding to the resonant eigenfrequency $\omega_i$. Consequently, the $M$-dimensional ROM  
	\begin{align}
		\dot{\bm{p}} &= \bm{R}_\epsilon(\bm{p}, \Omega t)
		\\ \label{eq:red_dyn_expansion}
		&=
		\sum_{\bm{m}\in \mathbb{N}^M} \bm{R}_{\bm{m}} \bm{p}^{\bm m}
		+
		\epsilon
		\sum_{\substack{\bm{m}\in \mathbb{N}^M \\ m \geq 1}} 
		\sum_{\kappa \in \mathbb{Z}} \bm{S}_{\bm{m},\kappa } e^{i \kappa \Omega t }\bm{p}^{\bm m}
	\end{align}
	needs to be analyzed for bifurcations using the \texttt{po} toolbox of \textsc{coco}\cite{Dankowicz2013}. Details on the corresponding continuation problem are recounted in Appenidx \ref{App:COCO}. As the forcing frequency changes, the ROM has to be updated, which highlights the need for a fast computational algorithm to calculate the SSM. The reduced dynamics are of low dimension, so continuation is a suitable method, as it is fast on small systems and provides key insights into the bifurcation behavior. The proposed approach thus combines the advantages of reduced-order modeling and numerical continuation. This procedure is now automated and available in SSMTool 2.4, which we use to analyze the following examples. Unless stated otherwise, the computations are performed using Matlab 2021a on a Macbook Air with a 1,4 GHz Intel Core i5 processor and 4GB of total RAM.
	
	\begin{remark}
		Other resonances with nonmaster modes might also occur close to this principal resonance and lead to resonance tongues that interact with the principal resonance tongue. Naturally, these other resonances are not detected by the ROM as it is ignorant of the spectrum of the other modes. Such near-resonance relationships with the master mode are called internal resonances. For such internally resonant systems, the existence and uniqueness of the underlying SSM are guaranteed if these resonant modes are also included in the master subspace, leading to a higher-dimensional SSM. We refer to Li et al. \cite{MingwuPart1, MingwuPart2} for the details of SSM theory and computation for internally resonant systems.
	\end{remark}
	
	\subsection{Examples}
	\label{sec:ExamplesSD}
	\subsubsection{Mathieu equation}\label{sec:ExamplesSD_Mathieu}
	The most frequently discussed system in the parametric excitation literature is the Mathieu equation
	\begin{align}\label{eq:MathieuEquation}
		m \Ddot{y} + c \Dot{y} + \omega_0^2(1 + \epsilon \cos{ \Omega t})y = 0, \quad 0\ll\epsilon\ll 1,    
	\end{align}
	whose stability is well understood. Various studies have appeared for the damped and undamped cases\cite{Verhulst2009}, some with additional nonlinearities to system~\cite{Rhoads2006_MEMS} and others in higher dimensions \cite{Hansen1985}. Here, we extend the system~\eqref{eq:MathieuEquation} to a set of damped oscillators coupled via springs with parametrically varying stiffness. Adding stiffness nonlinearities then results in the mechanical system displayed in FIG.~\ref{fig:CoupledMathieuModel}. 
	\begin{figure}[htbp]
		\centering
		\includegraphics[width=0.48\textwidth]{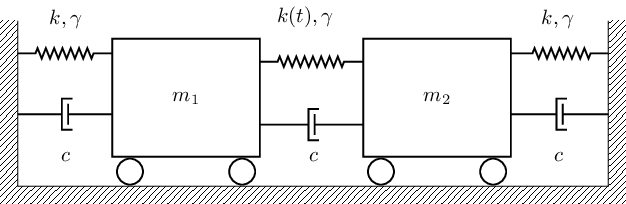}
		\caption[]{ Schematic model of two oscillators described by two coupled Mathieu equations. The coupling springs have time-varying stiffness coefficients $k(t) = k(1+ \epsilon \cos (\Omega t))$. The induced parametric excitation may destabilise the trivial fixed point if chosen in subharmonic resonance with an eigenfrequency of the linearized dynamical system.}
		\label{fig:CoupledMathieuModel}
	\end{figure}
	The resulting generalised Mathieu equation is given by
	\begin{align}\label{eq:ndMathieu}
		\bm{M} \bm{\ddot{y}} + \bm{C} \bm{\dot{y}}  + \bigg( \bm{K} + \epsilon \bm{Q}  \cos(\Omega t) \bigg)   \bm{y}  +\bm{f}(\bm{y}) =\bm{0}.
	\end{align}
	In the case considered here, these matrices and the nonlinearity are explicitly given as
	\begin{align}
		\small
		\bm{M}&=\left[\begin{array}{cc}m_1 & 0 \\ 0 & m_2 \end{array}\right],~
		\bm{C}= \left[\begin{array}{cc} 2c & -c \\ -c & 2c \end{array}\right],~
		\bm{K}=\left[\begin{array}{cc}2k & -k \\ -k & 2k \end{array}\right],
		\\
		\bm{Q}&= \left[\begin{array}{cc} k & -k \\ -k &  k \end{array}\right],~
		\bm{f}({y_1, y_2})= \kappa \left[\begin{array}{cc} -y_1^3 -(y_1-y_2)^3 \\  -y_2^3 +(y_1-y_2)^3   \end{array}\right].
	\end{align}
	We choose non-dimensional parameters $m_i =1, k=1, c = 0.05$ and $\kappa = 0.1$, for which we obtain the eigenvalues of the $\epsilon=0$ system in the form $\lambda_{1},\conj{\lambda_1} \approx -0.0250 \pm 0.9997i$ and $\lambda_{2},\conj{\lambda_2} \approx  -0.0500 \pm 1.7304i$. We seek to analyze the nonlinear response under the principal parametric resonance $\Omega \approx 2\omega_2$ by constructing the non-autonomous SSM corresponding to the spectral subspace $\mathcal{E} = \text{span}(\bm{v}_2,\conj{\bm{v}_2})$. Using numerical continuation in the reduced dynamics, we will detect the corresponding resonance tongue. 
	
	First, we choose an initial parameter set $(\epsilon_i, \Omega_i )$, then either of the two parameters is released, and the trivial $2\pi/\Omega_i$-periodic response is continued in search of period-doubling bifurcations. The detected bifurcation point is then continued in the parameter space, yielding the stability diagram in FIG.~\ref{fig:StabDiag_2DMathieu_SD}. The entire procedure is automated, constantly updating the coefficients of the nonautonomous reduced dynamics when the excitation frequency gets changed.
	\begin{figure}[t]
		\centering
		\subfloat[]{
			\includegraphics[width=0.4\textwidth]{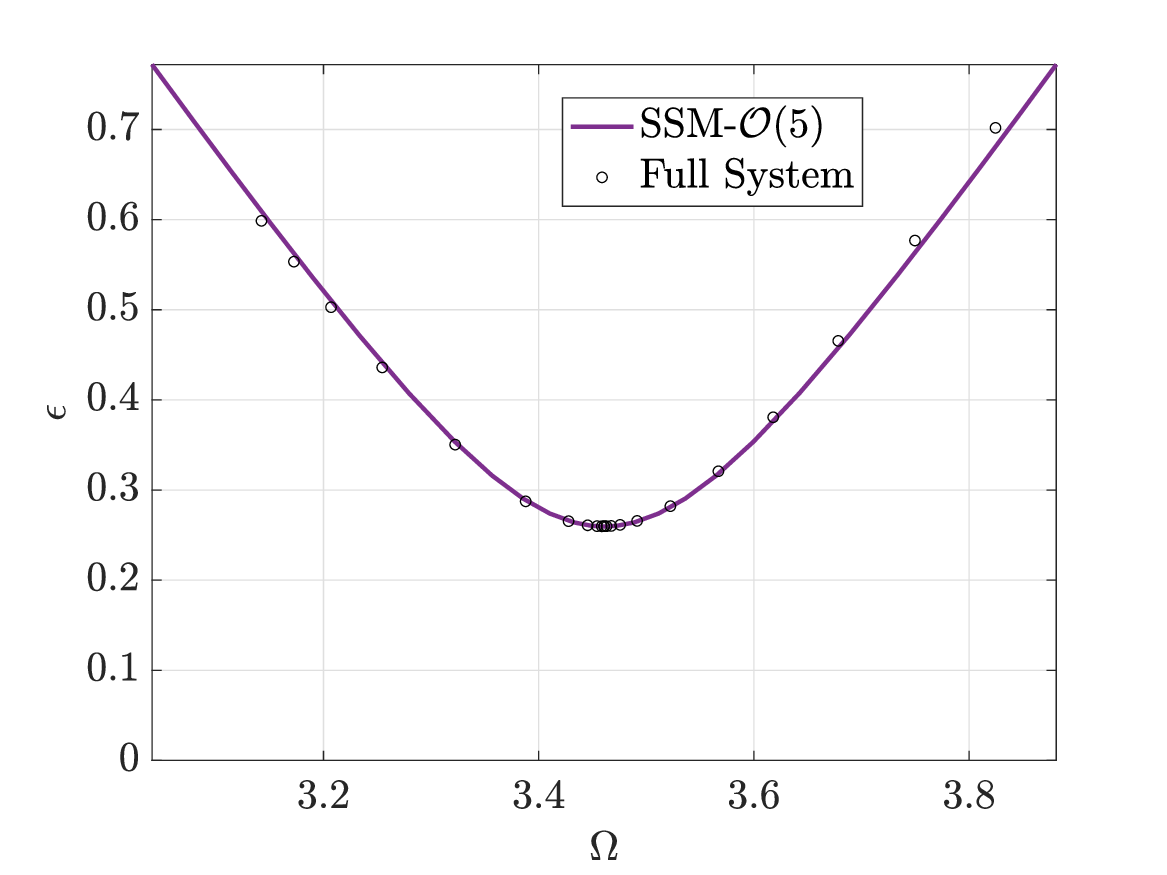}
			\label{fig:StabDiag_2DMathieu_SD}
		}
		\hfill
		\subfloat[]{
			\includegraphics[width=0.4\textwidth]{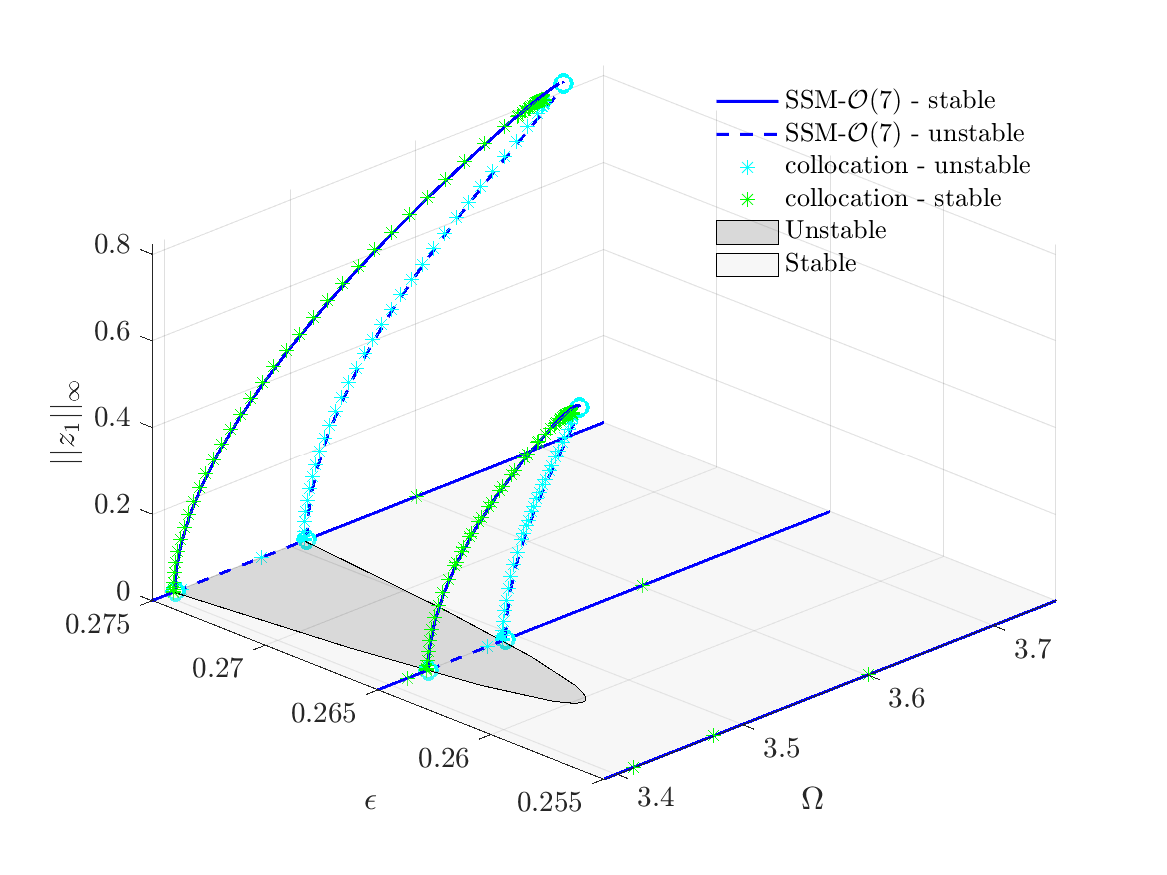}
			\label{fig:StabDiag_2DMathieu_FRC}
		}
		\caption[]{Stability diagram and forced response of the two coupled Mathieu oscillators shown in FIG.~\ref{fig:CoupledMathieuModel}. (a) Stability diagram of the second mode obtained using an SSM-based ROM, which locally approximates the resonance tongue in a region around the principal resonance where $\Omega \approx 2\omega_2$. (b) The $||z_1||_\infty = 0$ plane corresponds to the trivial response of the full system. On this plane, the resonance tongue is displayed in gray shades. From its boundary, orbits with period $4 \pi / \Omega$ emerge that show hardening behavior due to the nonlinearity. }
		\label{fig:StabDiag_2DMathieu}
	\end{figure}
	To obtain a more complete picture of how these regions of stability influence the system's dynamics, we consider the non-trivial forced response of system \eqref{eq:ndMathieu} which emanates from the instability. The orbits with period $T = \frac{4\pi}{ \Omega} \approx \frac{2 \pi}{\omega_0}$ bifurcate out of the trivial response at the stability boundary. The amplitude of these response curves increases smoothly with increasing excitation amplitude $\epsilon$.  The reduced dynamics on the SSM serves as a ROM to explore periodic orbit families via numerical continuation. The resulting stability diagram, as well as the forced response, can be seen in FIG. \ref{fig:StabDiag_2DMathieu_FRC}. The resonance tongue and the forced response are accurately reproduced using the ROM, as we verify using the \texttt{po}-toolbox of \textsc{coco} on the full dynamical system. The parameters used for continuation of the full and reduced models are noted in Appendix~\ref{App:COCO}. 
	
	The computation time for the stability diagram using the full system was 22 seconds. The same computation using the ROM, including the continual update of the SSM parametrization, takes 1 minute and 39 seconds. Similarly, for the FRCs displayed in FIG.~\ref{fig:StabDiag_2DMathieu_FRC} the full system analysis took 1 minute and 57 seconds, while using the ROM amounted to 3 minutes and 43 seconds. Therefore, because the full dynamical system is low-dimensional, the advantage of using a ROM is outweighed by the computational cost of constructing the reduced dynamics on the time-dependent SSM for each forcing frequency.
	\subsubsection{Bernoulli beam}\label{sec:ExamplesSD_BB}
	\begin{figure}[t]
		\centering
		\subfloat[]{
			\includegraphics[width=0.5\textwidth]{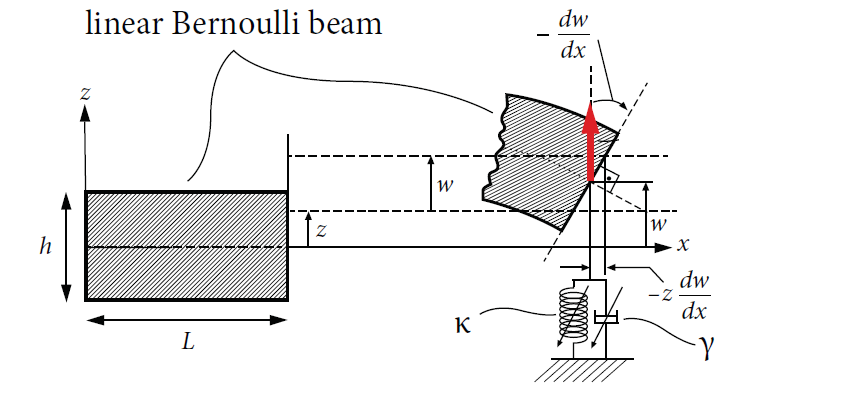}
		}
		\hfill
		\subfloat[]{
			\includegraphics[width=0.5\textwidth]{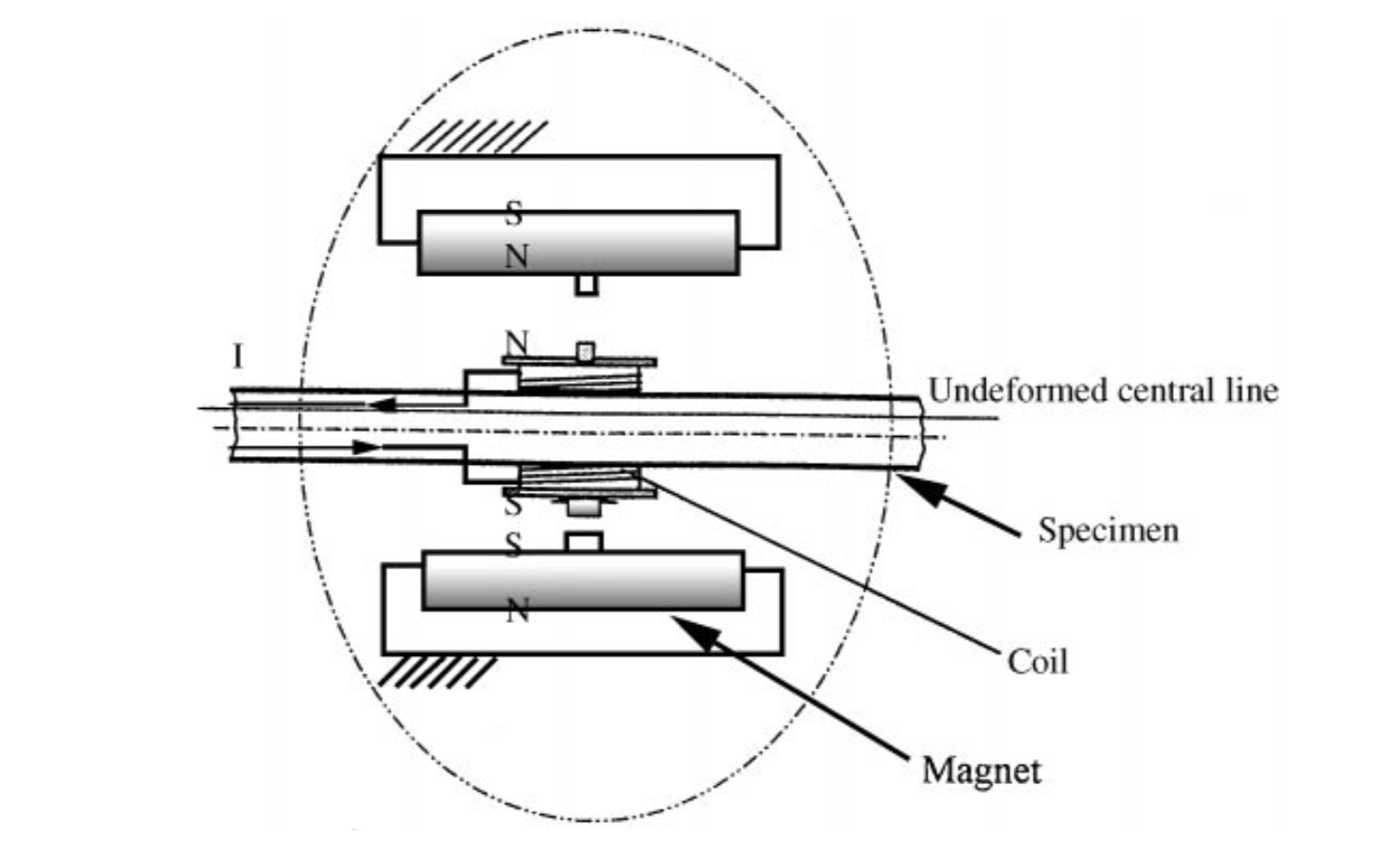}
		}
		\caption{(a) Model of a Bernoulli beam of length $L$ and height $h$\cite{isolatedresponsesten}. The beam is aligned with the $x$ axis, cantilevered on one side, and parametrically excited on the other, as indicated by the red arrow. A nonlinear (cubic) damper and a cubic spring are attached to the free end of the beam. (b) Schematic depiction of a possible experimental setup for parametrically exciting the beam. Two coils are used as tunable magnets via the induced current $I$. Depending on its magnitude and direction, the effective attraction and repulsion between the beam and the adjacent magnets can be controlled. This results in a time varying stiffness~\cite{Chen2001}.}
		\label{fig:BB_model}
	\end{figure}
	We now consider the nonlinear Bernoulli beam shown in FIG.~\ref{fig:BB_model} and treated in Ref.~\onlinecite{isolatedresponsesten}. It is cantilevered on one side and subject to a cubic spring, a cubic damper, and parametric excitation at its free end. A similar example has been experimentally studied by Chen and Yeh\cite{Chen2001}, who implemented parametric excitation through a pair of coils fixed to the beam. Nearby magnets mounted on a support were then used to exert a force on these coils when a current is applied.
	
	We discretize the beam into 5 finite elements, resulting in $n=10$ degrees of freedom. Damping is assumed to be proportional and of the form $\bm{C} = \sigma ( \alpha \bm{M} + \beta \bm{K})$. Here, the cubic nonlinearity $\bm{f}(\bm{y},\dot{\bm{y}})$ includes the nonlinear damping coefficient $\gamma$ and the nonlinear stiffness $\kappa$. The parameters and beam properties are set according to TABLE \ref{tab:BB_model}, and the parametric excitation amplitude is set to 1 Newton. This system is driven in the regime of parametric resonance by targeting the principal resonance of the mode associated with the leading pair of eigenvalues
	\begin{align}
		\lambda_{1},\conj{\lambda_1} \approx -0.1238 \pm 6.9995 i
	\end{align}
	The prinicipal resonance is thus obtained for forcing frequency values $\Omega \approx  13.999 \ \text{rad}/s$. We use the reduced dynamics on the corresponding SSM to construct the stability diagram of the principal resonance of the selected mode. The resulting resonance tongue is shown in FIG. \ref{fig:BB_SD} for various values of the damping coefficient $\sigma$. The forced response obtained from SSM theory for a set of parametric excitation amplitudes with SSM theory is shown in FIG.\ref{fig:BB_FRC}. We verified these reduced results against the full system using \textsc{coco}, for which the relevant simulation parameters can be found in Appendix \ref{App:COCO}. The computation times for the stability diagram and forced responses are reported in TABLE \ref{tab:CompTimeBB_SD} and TABLE \ref{tab:CompTimeBB_FRC}, which show that the computational savings arising from the use of SSM-based ROM in this higher-dimensional example are significant, unlike in our first example.
	\begin{table}
		\parbox{\linewidth}{
			\centering
			\caption{Physical parameters used for the model of the Bernoulli beam.}
			\begin{tabular}{c c c } 
				\hline
				Symbol & Meaning & Value \\ [0.5ex] 
				\hline\hline
				$L$ & Length of beam & 2.7 (m) \\
				$h$ & Height of beam & 10 (mm)\\
				$b$ & Width of beam  & 10 (mm) \\
				$E$ & Young's modulus & 45 $\times 10^6$ (kPa) \\
				$I$ & Area moment of inertia & 833.3 (mm$^4$) \\
				$\alpha$ & Structural damping parameter & 1.25 $\times$ $10^{-4}$ (s$^{-1}$) \\ 
				$\beta$ & Structural damping parameter & 2.5 $\times$ $10^{-4}$ (s) \\ 
				$\kappa$ & Cubic stiffness coefficient & 50 (N/m$^3$) \\ 
				$\gamma$ & Cubic damping coefficient & 0.01 (Ns/m$^3$) \\ 
				\hline
				\label{tab:BB_model}
			\end{tabular}
		}
	\end{table}
	%
	%
	
	\begin{table}
		\centering
		\caption{Computation time for for the stability diagrams in FIG. \ref{fig:BB_SD} with $n=10$ DOFs in the format hh:mm:ss.}
		\begin{tabular}{c c c} 
			\hline
			Damping ($\sigma$) & SSM-based ROM & {Full system} \\ [0.5ex] 
			\hline\hline
			10 & 00:01:10 & 00:02:07 \\
			20 & 00:00:35 & 00:01:42 \\
			30 & 00:00:30 & 00:01:31 \\
			40 & 00:00:37 & 00:01:21 \\
			\hline
			\label{tab:CompTimeBB_SD}
		\end{tabular}
	\end{table}
	\begin{table}
		\centering
		\caption{Computation time for the FRCs in FIG. \ref{fig:BB_FRC} with $n=20$ DOFs in format hh:mm:ss.}
		\begin{tabular}{c c c} 
			\hline
			Damping ($\sigma$) & SSM-based ROM & Full system \\ [0.5ex] 
			\hline\hline
			10 & 00:14:58 & 06:12:31 \\
			\hline
			\label{tab:CompTimeBB_FRC}
		\end{tabular}
	\end{table}
	\begin{figure}
		\centering
		\subfloat[]{
			\includegraphics[width=0.4\textwidth]{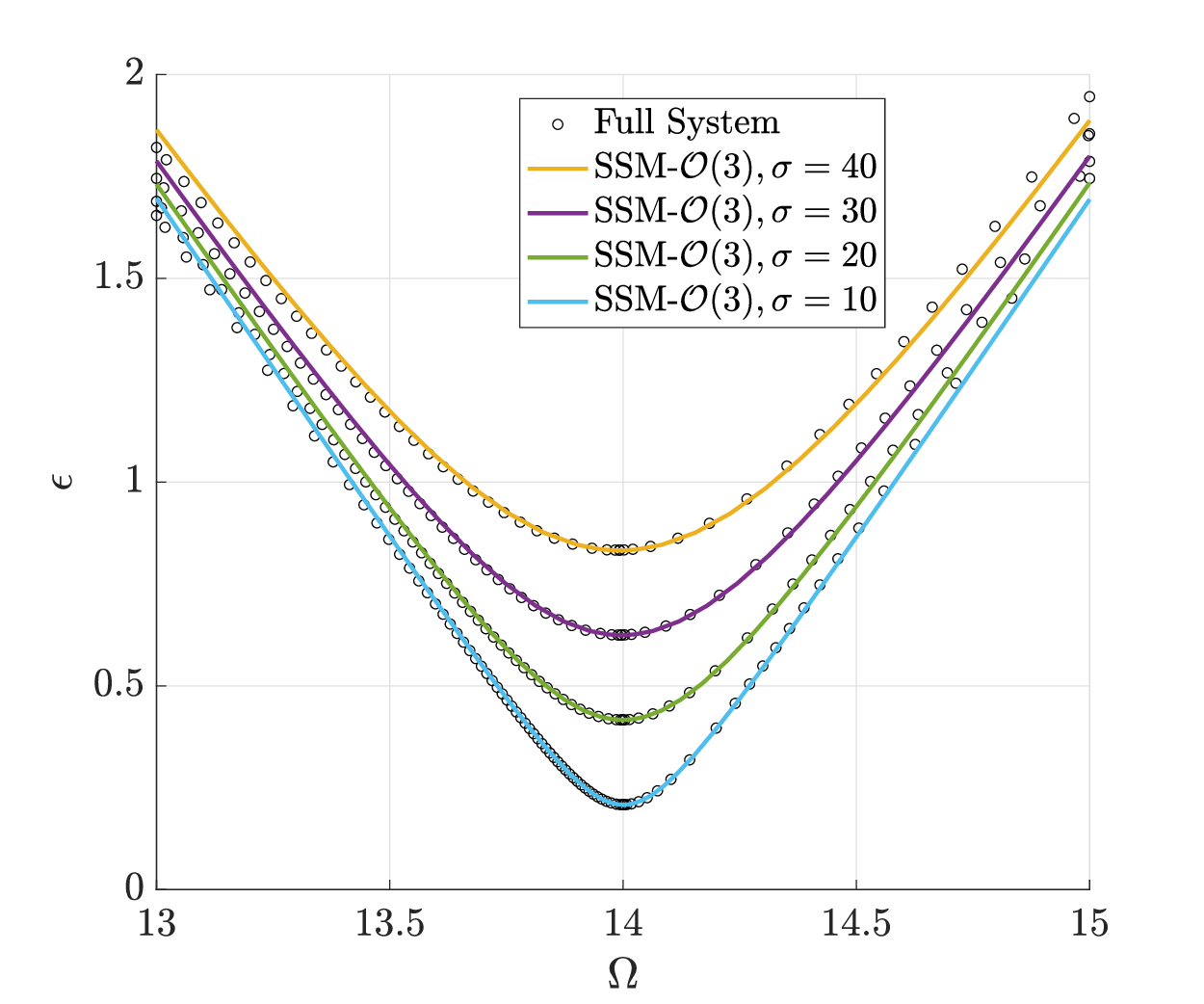}
			\label{fig:BB_SD}
		}
		\hfill
		\subfloat[]{
			\includegraphics[width=0.4\textwidth]{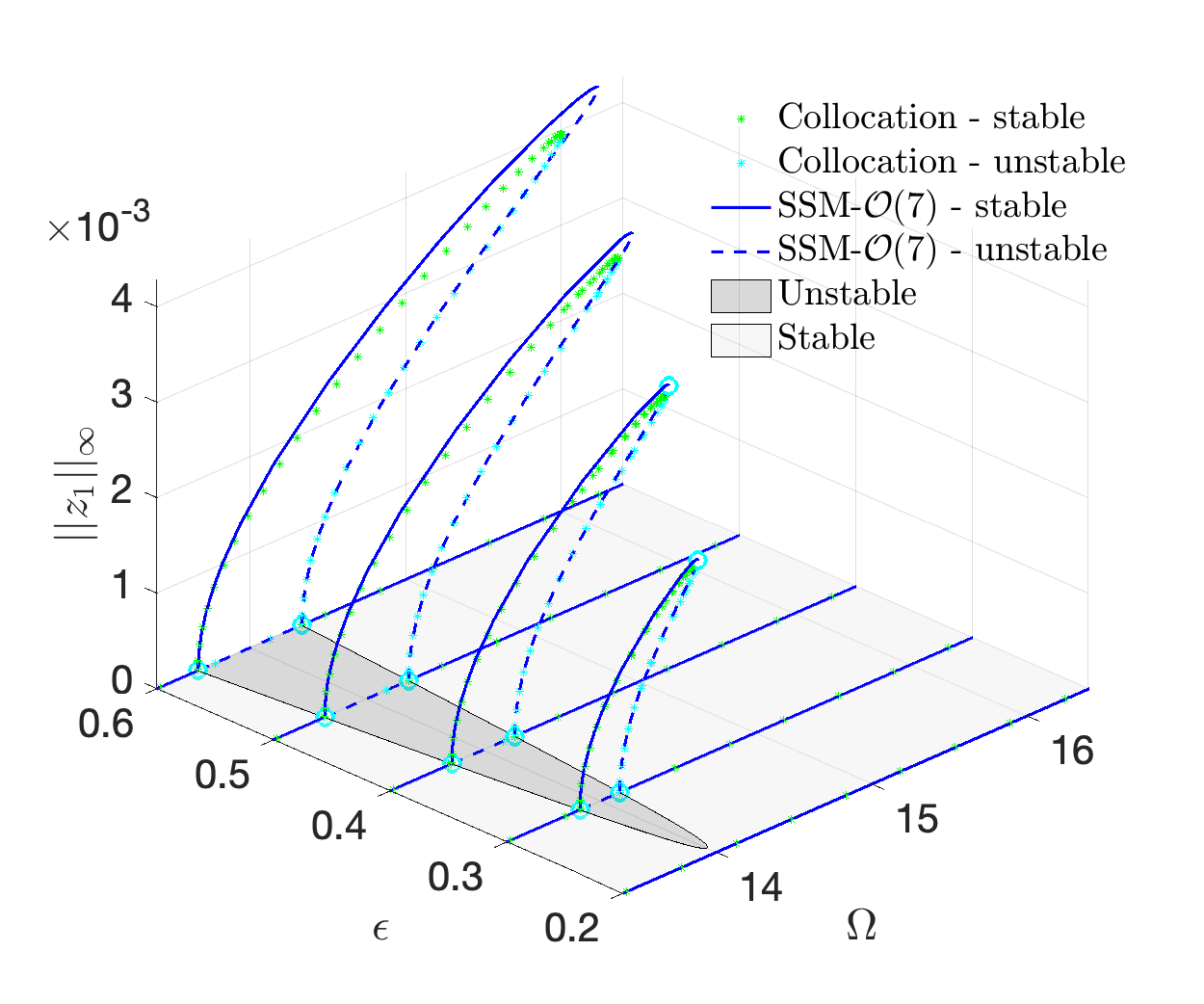}
			\label{fig:BB_FRC}
		}
		\caption[]{ (a) Stability diagram for the principal resonance of the first mode of the Bernoulli beam for 5 discrete elements with $n=10$ degrees of freedom. The solid line denotes the border of the resonance tongues as obtained using the dynamics of the full system and has been obtained using \texttt{po}. The reduced model of these dynamics provided by a cubic truncation of the SSM accurately locates the position and shape of the tongue for different values of the damping parameter $\sigma$. (b) Forced response bifurcating out of the trivial response at the boundary of the stability diagram for $n=20$ DOFs. The amplitude of the response curve increases with growing parametric excitation amplitude $\epsilon$.}
		\label{fig:BB}
	\end{figure}
	\subsubsection{Prismatic Beam}\label{sec:ExamplesSD_PB}
	Next we analyze the dynamics of a hinged-clamped prismatic beam. Extending the initial treatment of external excitation by Nayfeh et al. \cite{Nayfeh1974NonlinearElements}, Li et al~\cite{MingwuPart1} have recently applied SSM-theory to the case of a 1:3 internal resonance between the first two bending modes of this beam model. Their implementation serves as the basis of this example but we choose the nondimensionalized length in terms of the characteristic length here as $l= 1.7$ to avoid internal resonances. Introducing nondimensional parameters and coordinates, Nayfeh et al.\cite{Nayfeh1974NonlinearElements} derive nonlinear partial equations for the transverse displacement $w(y,t)$ of the beam. Under axial loading at the hinged tip, these equations must be modified to \cite{Barari2011NonlinearBeams}
	\begin{align}\label{eq:PDEPBParam}
		\frac{\partial^4 w}{\partial y^4} + \frac{\partial^2 w}{\partial t^2}
		=  \epsilon \left(H \frac{\partial^2w}{\partial y^2}  - p_a(t) \frac{\partial^2w}{\partial y^2}  \right),
	\end{align}
	where $p_a(t)$ describes the axial loading, $H$ accounts for the axial stretching forces and depends on $w$, and $c$ is the distributed damping parameter. The axial excitation effectively leads to a parametric excitation on the transverse displacements of the beam, as is evident in eq.~\eqref{eq:PDEPBParam}. The beam is clamped at $x = l$ and hinged at $y=0$, resulting in the boundary conditions
	\begin{align}
		w(0) = w(l) = 0, \quad
		w'(l)  = 0,\quad
		w''(0) = 0. 
	\end{align}
	After writing the displacement $w(y,t)$ as a modal expansion, we obtain a system of second-order ODEs for the evolution of the modal amplitudes $z_j(t)$.
	\begin{figure}[t]
		\centering
		\subfloat[]{
			\includegraphics[width=0.4\textwidth]{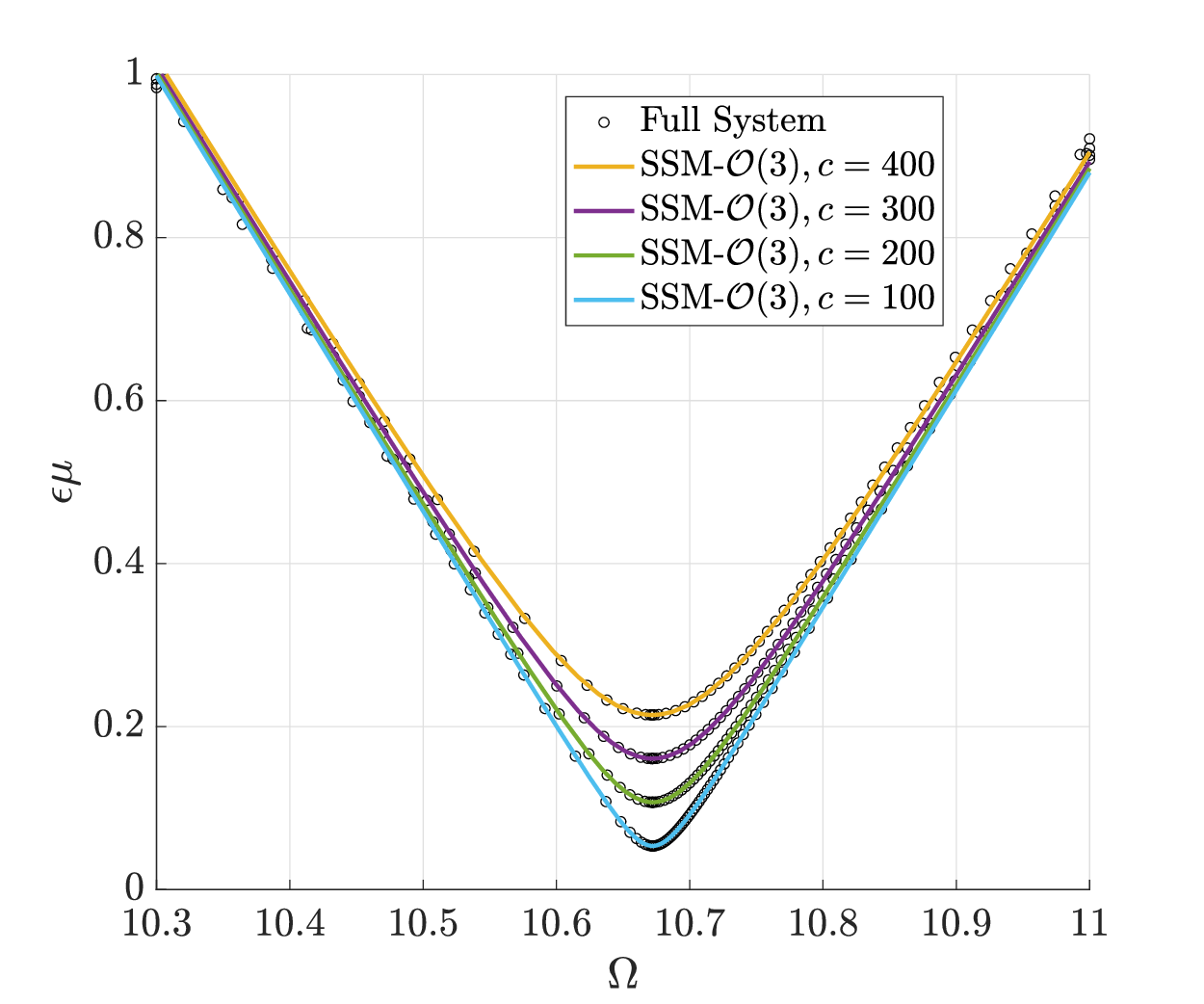}
			\label{fig:PB_SD}
		}
		\hfill
		\subfloat[]{
			\includegraphics[width=0.4\textwidth]{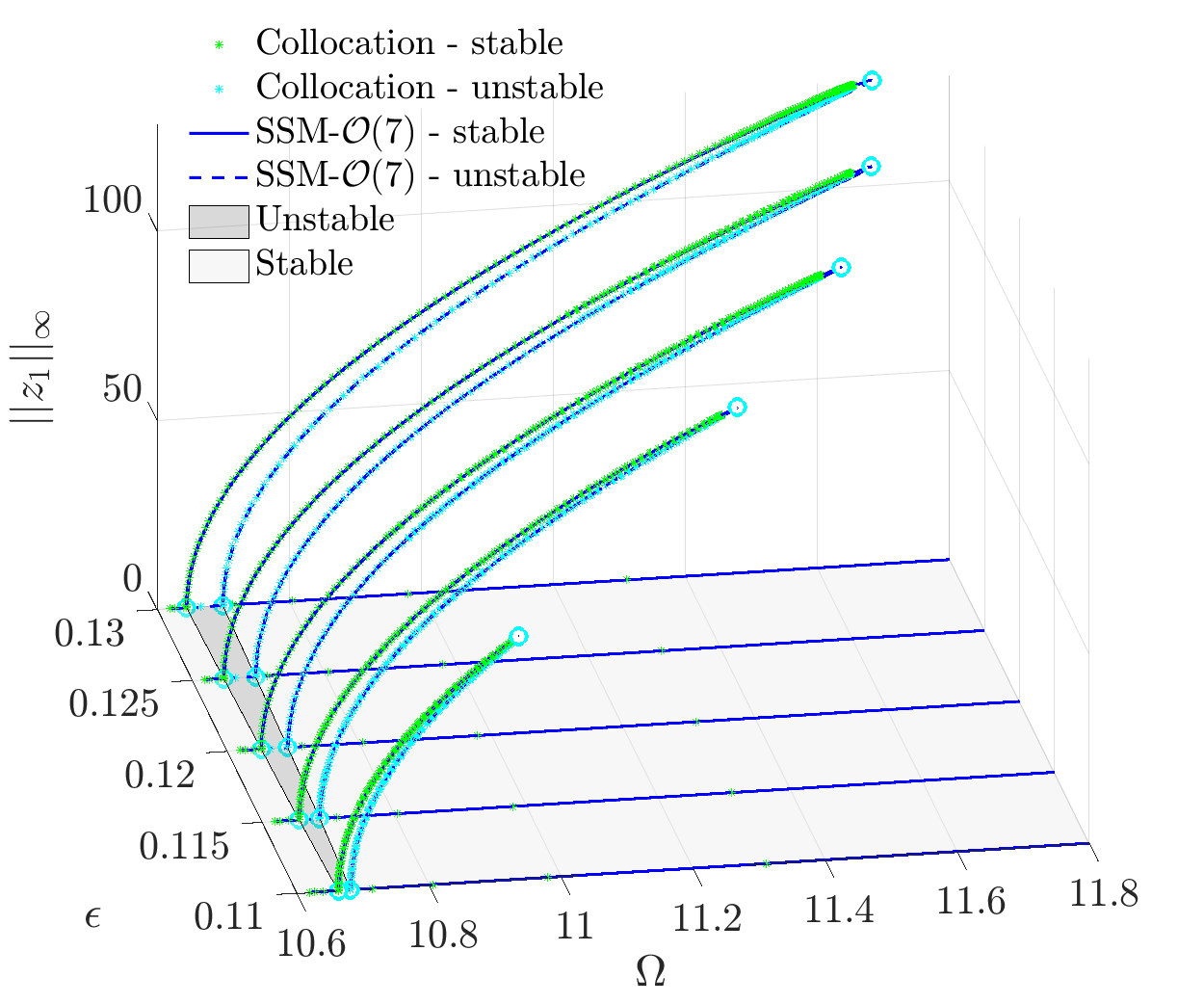}
			\label{fig:PB_FRC}
		}
		\caption[]{ (a) Stability diagram for the principal resonance of the first mode of the prismatic beam for a truncation with $n=10$ modal coordinates. The full system response has been obtained with the use of collocation, specifically \texttt{po}. The reduced model of these full dynamics given by the dynamics on the SSM can accurately predict the stability behaviour of the full system. (b) Nontrivial period $T = 4\pi / \Omega$ forced response for a number of excitation amplitudes for a fixed damping coefficient $c=200$. The ROM on the SSM accurately reproduces the full system response.}
		\label{fig:PB}
	\end{figure}
	\begin{align}
		\Ddot{z}_j +  2 \epsilon c \dot{z}_j + \omega_j^2 z_j = \sum_{i,k,s} \epsilon \alpha_{jiks} z_i z_k z_s + \epsilon \sum_i p_a(t) z_i a_{ji},  
	\end{align}
	with $j = 1,\dots,n$. A detailed derivation of these equations using Galerkin projection along with the definition of the coefficients $\alpha_{jiks}, a_{ji}$ are given in Appendix~\ref{App:PrismaticBeam}. For this particular system, the nonlinear internal forces appear premultiplied by $\epsilon$. Note, however, that this not a requirement for SSM-theory to be applicable. The shape of the SSMs depends smoothly on the magnitude of the nonlinearities, they thus also allow for the analysis of systems that include large nonlinearities. We take the axial forcing to be harmonic and of the form
	\begin{align}
		p_a(t) = \mu \cos (\Omega t).
	\end{align}
	Furthermore, we set $\epsilon = 1 \times 10^{-4}$. The modal expansion is truncated at $n=10$ modes which gives rise to a phase space dimensionality of $N=20$. 
	
	We wish to analyze the stability of the beam when axially excited at the principal resonance of its fundamental frequency. We construct the SSM over the first pair of modes with eigenvalues
	\begin{align}
		\lambda_1, \conj{\lambda_1} \approx -0.0200 \pm 5.3361i.
	\end{align}
	The instability of the trivial fixed point induced by the parametric resonance occurs near the excitation frequency values $\Omega \approx 10.6672  \ [rad/s]$. FIG. \ref{fig:PB} shows the corresponding stability diagram and the steady-state response computed using the SSM-based ROM. The full system's stability is again verified using the \texttt{po}-toolbox of \textsc{coco}. The parameters chosen for numerical continuation of the full and reduced system are documented in Appendix~\ref{App:COCO}.
	
	TABLE~\ref{tab:CompTimePB} shows the computation times for obtaining the FRCs as well as the stability diagram for a series of damping parameters. Again, the use of an SSM-based ROM brings a significant computational advantage over the full model. 
	\begin{table}
		\centering
		\caption{Computation time for the prismatic beam example in hh:mm:ss.}
		\begin{tabular}{c c c c c} 
			\hline
			FIG.& DOFs (n) & Damping ($c$) & SSM-based ROM  & Full system \\ [0.5ex] 
			\hline\hline
			
			\ref{fig:PB_SD} & 10 & 100 & 00:01:11 & 00:05:41 \\
			&   & 200 & 00:00:42 & 00:04:39 \\
			&   & 300 & 00:00:38 & 00:04:19 \\
			&   & 400 & 00:00:36 & 00:04:02 \\
			\hline
			\ref{fig:PB_FRC} & 10 & 200 & 02:21:09 & 21:03:03 \\
			\hline
			\label{tab:CompTimePB}
		\end{tabular}
	\end{table}
	\section{Forced response and Spectral submanifolds}
	\label{sec:FRCSSM}
	\subsection{Mathematical Foundation}\label{sec:MathFRC}
	For externally excited systems, SSMs have proven highly effective in capturing the forced response accurately at a low computational cost\cite{SSMTool, isolatedresponsesten, Ponsioen2020, Jain2021}. Next, we extend these results to obtain FRCs of systems subject to both external and parametric excitation. Our algorithm is completely parallelizable and can be carried out for each forcing frequency independently. We consider periodic external and parametric excitation and assume that their harmonic $\kappa_0$ is in 1:1 resonance with the master mode, i.e. 
	\begin{align} \label{eq:ResCond_FRC}
		\mathrm{i}\kappa_0 \Omega \approx \lambda_j
	\end{align}
	for some $j$. We further assume that no (near) internal resonances are present in the spectrum of the dynamical system. The periodic forced response and its stability can then be explicitly computed from the SSM-based ROM according to the following statement.
	\begin{lemma} \label{lem:FRC}
		Assume that the near-resonance condition~\eqref{eq:ResCond_FRC} holds for a mechanical system subject to periodic external and parametric excitation. We then obtain the following results for the dynamics on the two-dimensional SSM that serves as the non-autonomous, nonlinear continuation of the $j^{\mathrm{th}}$ linear mode:
		\begin{enumerate}[label=(\roman*),ref=(\roman*)]
			\item \label{lem:i_PolDyn} Polar dynamics:
			The reduced dynamics in polar coordinates $(\rho, \theta)$ is given by
			\begin{align} \label{eq:PolarDynamics}
				\begin{bmatrix}\dot{\rho} \\ \rho \dot{\psi}\end{bmatrix} &= \bm{r}(\rho,\psi,\Omega),\\   
				\dot{\phi} &=  \Omega,
			\end{align}
			where 
			\begin{align}
				\psi 				  =& \ \theta - \kappa_0 \phi,
				\\
				\nonumber
				\bm{r}(\rho,\psi,\Omega)  
				=& 	 
				\begin{bmatrix}a(\rho) \\ b(\rho)  \end{bmatrix} 
				+ \epsilon 
				\bigg( 
				\bm{Q}(\psi) 
				\begin{bmatrix} 
					\Real (S^1_{\bm{0},\kappa_0})  \\ \Imag (S^1_{\bm{0},\kappa_0})
				\end{bmatrix} 
				\\ 
				&+ 
				\sum_{ \substack{ \bm{m} \in \mathbb{N}^2 \\ \kappa\in \mathbb{Z}^* }} 
				\rho^{|\bm{m}|} \bm{Q}((\kappa/\kappa_0)\psi) 
				\begin{bmatrix} 
					\Real (S^1_{\bm{m},\kappa})  \\ \Imag(S^1_{\bm{m},\kappa})  
				\end{bmatrix} 
				\bigg),\\
				a(\rho)              =& \ \Real \bigg(  \sum_{\bm{m}} R^1_{\bm{m}} \rho^{|\bm{m}|} \bigg),
				\\
				b(\rho,\Omega)       =& \ \Imag \bigg(  \sum_{\bm{m}} R^1_{\bm{m}} \rho^{|\bm{m}|} \bigg) - \kappa_0 \rho \Omega,
				\\
				\bm{Q(\bullet)} :=& \ \begin{bmatrix} \cos \bm{(\bullet)}  &  \sin \bm{(\bullet)} \\  -\sin \bm{(\bullet)}    &\cos \bm{(\bullet)} \end{bmatrix} 
			\end{align}
			and the sum over $\kappa$ and $\bm{m}$ includes the harmonics and spatial multi-indices associated to nonzero reduced dynamics coefficients in a normal-form style of parametrization.
			\item \label{lem:ii_FRC} Forced response curve:
			The fixed points of the SSM-reduced dynamics lie on the forced response curve that satisfies
			\begin{equation}
				\bm{r}(\rho,\psi,\Omega) = \bm{0}.
			\end{equation}
			\item \label{lem:iii_FRC_stab} Stability of the forced response:
			For any hyperbolic fixed point $(\rho,\psi)$ of \eqref{eq:PolarDynamics}, the eigenvalues of the Jacobian 
			\begin{align}\nonumber
				J = 
				\begin{bmatrix} 
					\partial_\rho  a(\rho)    				& 0 \\
					\partial_\rho ( b(\rho)/ \rho)   & 0 
				\end{bmatrix} 
				+
				\epsilon 
				\bigg(
				\begin{bmatrix} 
					0 					&   d_{\bm{0},\kappa_0}(\psi) \\ 
					-\frac{d_{\bm{0},\kappa_0}(\psi) }{\rho^2} & -\frac{c_{\bm{0},\kappa_0}(\psi) }{\rho} \end{bmatrix} 
				\\	
				+
				\sum_{\substack{ \bm{m} \in \mathbb{N}^2 \\ \kappa\in \mathbb{Z}^* }}
				\begin{bmatrix} 
					c_{\bm{m},\kappa}(\psi)   |\bm{m}| \rho^{|\bm{m}|-1}      & (\kappa/\kappa_0) d_{\bm{m},\kappa}(\psi) \rho^{|\bm{m}|}
					\\  
					d_{\bm{m},\kappa}(\psi)   (|\bm{m}|-1) \rho^{|\bm{m}|-2 } & - (\kappa/\kappa_0 ) c_{\bm{m},\kappa}(\psi) \rho^{|\bm{m}|-1}
				\end{bmatrix} 
				\bigg)
			\end{align}
			determine the stability of the fixed point, where
			\begin{align}
				c_{\bm{m},\kappa}(\psi) &:= \Real (S^1_{\bm{m},\kappa})\cos((\kappa/\kappa_0) \psi)  \nonumber \\ & \ \ + \Imag (S^1_{\bm{m},\kappa})\sin((\kappa/\kappa_0) \psi),  
				\\
				d_{\bm{m},\kappa}(\psi) &:= -\Real (S^1_{\bm{m},\kappa})\sin((\kappa/\kappa_0) \psi) \nonumber \\ & \ \  + \Imag (S^1_{\bm{m},\kappa})\cos((\kappa/\kappa_0) \psi).  
			\end{align}
		\end{enumerate}
	\end{lemma}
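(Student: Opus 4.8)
The plan is to collapse the two-dimensional reduced dynamics onto a single complex scalar equation and then pass to polar coordinates, so that the stated structure emerges from tracking phases. Because the SSM is two-dimensional and the underlying system real, the parametrization coordinates form a conjugate pair $\bm{p}=(p,\conj{p})$ and the second component of $\bm{R}_\epsilon$ is the complex conjugate of the first; it therefore suffices to analyze $\dot{p}=R^1_\epsilon(p,\conj{p},\phi)$. Writing $p=\rho e^{i\theta}$ gives $\dot{p}=(\dot{\rho}+i\rho\dot{\theta})e^{i\theta}$, so multiplying the scalar equation by $e^{-i\theta}$ and separating real and imaginary parts produces the equations for $\dot{\rho}$ and $\rho\dot{\theta}$. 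The substitution $\psi=\theta-\kappa_0\phi$, for which $\rho\dot{\psi}=\rho\dot{\theta}-\kappa_0\rho\Omega$, then introduces the $-\kappa_0\rho\Omega$ shift appearing in $b(\rho,\Omega)$ and establishes the polar form in \ref{lem:i_PolDyn}.

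The crux is to show that, in a normal-form style of parametrization, every surviving monomial $S^1_{\bm{m},\kappa}e^{i\kappa\phi}p^{m_1}\conj{p}^{m_2}$ contributes a term whose phase is an exact multiple of $\psi$. Since $p^{m_1}\conj{p}^{m_2}=\rho^{|\bm{m}|}e^{i(m_1-m_2)\theta}$, the phase of each term after multiplication by $e^{-i\theta}$ is $(m_1-m_2-1)\theta+\kappa\phi$. The normal-form resonance condition~\eqref{eq:ResCond_nonaut} retains only $(\bm{m},\kappa)$ with $\lambda\approx m_1\lambda+m_2\conj{\lambda}+i\kappa\Omega$; combined with the $1{:}1$ assumption~\eqref{eq:ResCond_FRC}, $\lambda\approx i\kappa_0\Omega$ and $\conj{\lambda}\approx-i\kappa_0\Omega$, this forces $\kappa=\kappa_0\big(1-(m_1-m_2)\big)$, i.e.\ $m_1-m_2-1=-\kappa/\kappa_0$. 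The phase then collapses,
\begin{align}
(m_1-m_2-1)\theta+\kappa\phi=-(\kappa/\kappa_0)(\theta-\kappa_0\phi)=-(\kappa/\kappa_0)\psi,
\end{align}
so each term reduces to $\epsilon\,S^1_{\bm{m},\kappa}\rho^{|\bm{m}|}e^{-i(\kappa/\kappa_0)\psi}$. Taking the real and imaginary parts of $S^1_{\bm{m},\kappa}e^{-i(\kappa/\kappa_0)\psi}$ reproduces exactly the action of $\bm{Q}((\kappa/\kappa_0)\psi)$ on $\begin{bmatrix}\Real(S^1_{\bm{m},\kappa})\\\Imag(S^1_{\bm{m},\kappa})\end{bmatrix}$, with the leading external-forcing term $\bm{m}=\bm{0}$, $\kappa=\kappa_0$ giving the isolated $\bm{Q}(\psi)$ contribution; the autonomous resonant terms satisfy $m_1-m_2=1$ and are thus $\theta$-independent, yielding $a(\rho)$ and $b(\rho,\Omega)$. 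I expect this phase-collapse step, which rests on using both resonance conditions correctly, to be the main obstacle, the remainder being routine trigonometric bookkeeping.

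Part~\ref{lem:ii_FRC} is then immediate: system~\eqref{eq:PolarDynamics} is autonomous in $(\rho,\psi)$ since $\phi$ has dropped out, so a nontrivial ($\rho>0$) zero of $\bm{r}(\rho,\psi,\Omega)$ is a fixed point of $(\rho,\psi)$, which corresponds to a periodic orbit of the extended system and hence, through $\bm{W}_\epsilon$, to a periodic forced response of the full system. For the stability claim~\ref{lem:iii_FRC_stab}, I would linearize the genuine planar vector field $(\dot{\rho},\dot{\psi})=(r_1,r_2/\rho)$, where $r_1,r_2$ are the components of $\bm{r}$. The only nontrivial point is the factor $1/\rho$ in the second equation, whose product and quotient rules generate the $1/\rho$ and $1/\rho^2$ prefactors and the shifted exponents $|\bm{m}|-1$, $|\bm{m}|-2$ seen in $J$. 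The $\psi$-derivatives close among the coefficient functions via $\partial_\psi c_{\bm{m},\kappa}=(\kappa/\kappa_0)d_{\bm{m},\kappa}$ and $\partial_\psi d_{\bm{m},\kappa}=-(\kappa/\kappa_0)c_{\bm{m},\kappa}$, which follow directly from their definitions and produce the off-diagonal $(\kappa/\kappa_0)$-weighted entries of $J$. Since $a(\rho)$ and $b(\rho,\Omega)/\rho$ are $\psi$-independent, the autonomous block contributes only to the first column of $J$, matching the stated form; the eigenvalues of $J$ at a hyperbolic fixed point then decide its stability.
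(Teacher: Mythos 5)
Your proposal is correct and follows essentially the same route as the paper's proof: the polar substitution $p=\rho e^{i\theta}$, the phase collapse $e^{i\kappa\phi}e^{i(m_1-m_2-1)\theta}=e^{-i(\kappa/\kappa_0)\psi}$ derived from combining the normal-form resonance condition~\eqref{eq:ResCond_nonaut} with the $1{:}1$ condition~\eqref{eq:ResCond_FRC}, and the identification of fixed points of the autonomous $(\rho,\psi)$ system with periodic orbits of the full system. Your explicit use of the identities $\partial_\psi c_{\bm{m},\kappa}=(\kappa/\kappa_0)d_{\bm{m},\kappa}$ and $\partial_\psi d_{\bm{m},\kappa}=-(\kappa/\kappa_0)c_{\bm{m},\kappa}$ in the Jacobian computation is a correct elaboration of a step the paper leaves implicit.
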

	\begin{proof}
		See Appendix \ref{App:FRC_Lemma}
	\end{proof}
	Fixed points of the reduced dynamics in polar coordinates correspond to periodic orbits of the full system as the SSM is a time-periodic invariant manifold, i.e., $\bm{z(t)} = \bm{W}_{\epsilon}(\bm{p}(t),\phi)$. As statement (ii) of Lemma~\ref{lem:FRC} only requires the computation of zero level sets, the forced response can be computed independently for each forcing frequency $\{\Omega_1, ... \Omega_z \}$ in the interval of interest. Another consequence is that isolated regions of forced response, the so-called \textit{isolas}, can be computed at the same time. This is in stark contrast to continuation-based methods, which will only find such branches of response from initial conditions that are specified close to isolas. 
	
	We make use Lemma~\ref{lem:FRC} to analyze the forced response of dynamical systems that are subject to direct and parametric excitation simultaneously. In our examples, we will consider systems of the form 
	\begin{align}\label{eq:DS_Mathieu}
		\bm{M} \bm{\ddot{y}} + \bm{C} \bm{\dot{y}}  + \bigg( \bm{K} + \epsilon\mu \bm{Q}  \cos(2\Omega t) \bigg)   \bm{y}  +\bm{f}(\bm{y, \dot{y}}) =\epsilon \bm{g}(\bm{y,\dot{y}},\Omega t),
	\end{align}
	with the function $\bm{g}$ including nonlinear parametric excitation and external excitation. Solving the invariance equation \eqref{eq:InvEq} up to $\mathcal{O}(\epsilon)$ provides a parametrization of the nonautonomous SSM. The computation of these parametrizations and the extraction of the FRCs will be performed with SSMTool 2.4. For a given dynamical system of first or second order, the algorithm automatically solves the invariance equations \eqref{eq:InvEq_aut} and \eqref{eq:InvEq_nonaut} up to any desired order and computes the steady-states using Lemma~\ref{lem:FRC}. 
	
	
	

	\subsection{Examples}
	\label{sec:ExamplesFRC}

	\subsubsection{Self-excited parametric oscillator}\label{sec:FRC_ex_SelfExcite} 
	Self-excitation of mechanical systems is caused by negative linear damping that leads to a destabilization of the trivial fixed point of the linearized system. In contrast, nonlinear positive damping stabilizes the system for large oscillation amplitudes and forms a limit cycle, resulting in behavior described by a Hopf-type normal form~\cite{Lu2017}. Such self-excitations may arise, for instance, from dry friction, vibrations during machine cutting, or nonlinear feedback control in MEMS devices \cite{Chatterjee2007, Malas2015, Fu2011}. 
	
	The example we study here represents a 2-DOF oscillator with nonlinear elasticity and damping, as shown in FIG. \ref{fig:SelfExciteModel}. This system is subject to external excitation along one DOF, which is coupled to the second DOF via a spring with periodically varying stiffness (see, e.g., Warminski and Szabelski\cite{Szabelski1997, Warminski2DOF1995}). The three types of excitation seen in this example (external, parametric and self-excitation) are present in a number of engineered structures, including coupled vibrations of unbalanced shafts with time-varying rigidity, whirling in slide bearings, and metal machine cutting. To study the system dynamics, the cited references construct approximate analytical solutions by assuming a steady state in the form of a sum of trigonometric functions. Assuming that the oscillation amplitudes vary slowly in time and, therefore, dropping higher-order derivatives, the authors arrive at an expression for the amplitudes, which they solve numerically. 
	\begin{figure}[h]
		\centering
		\includegraphics[width=0.45\textwidth]{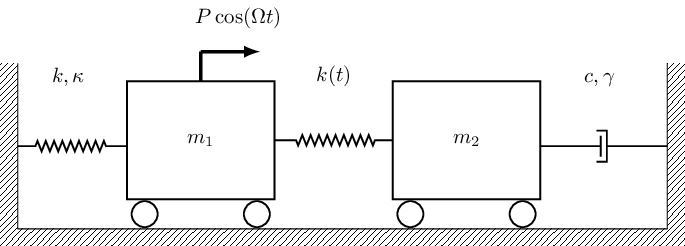}
		\caption{Model of a self-excited oscillator system with external and parametric excitation. The stiffness of the coupling spring varies periodically as $k(t) = 4k(1 + \mu \cos (2 \Omega t))$. The negative linear damping coefficient $c$ introduces self-excitation that is stabilized by the nonlinear damping with coefficient $\gamma$. The left spring is characterized by linear and nonlinear stiffness coefficients $k$ and $\kappa$. } \label{fig:SelfExciteModel}
	\end{figure}
	The equations of motion can be written in the dimensionless form \cite{Szabelski1997}
	\begin{align}
		\label{eq:self-1}
		\ddot{y}_1 + \Tilde{k} ( 1 - \Tilde{\mu} \cos{2 \Omega t}) (y_1 - y_2) + y_1 + \Tilde{\kappa} y_1^3 = q \cos{ \Omega t},
		\\
		\label{eq:self-2}
		\ddot{y}_2 - ( \Tilde{c} - \Tilde{\gamma} \dot{y}_2^2) \dot{y}_2 - \Tilde{k} M (1 - \Tilde{\mu}\cos{2 \Omega t}) (y_1 - y_2) = 0.
	\end{align}
	The parameters values are chosen as $\Tilde{k} = 4, \Tilde{\mu} = 0.02, \Tilde{\kappa} = 0.1,  \Tilde{c} =  0.01, \Tilde{\gamma} = 0.05, M = 0.5 $ and $q = 0.02$ and $\epsilon =1$. Using SSM-theory to obtain the FRC of this dynamical system involves the following steps.
	
	\begin{figure}[htbp]
		\centering
		\subfloat[]{ 
			\includegraphics[width=0.4\textwidth]{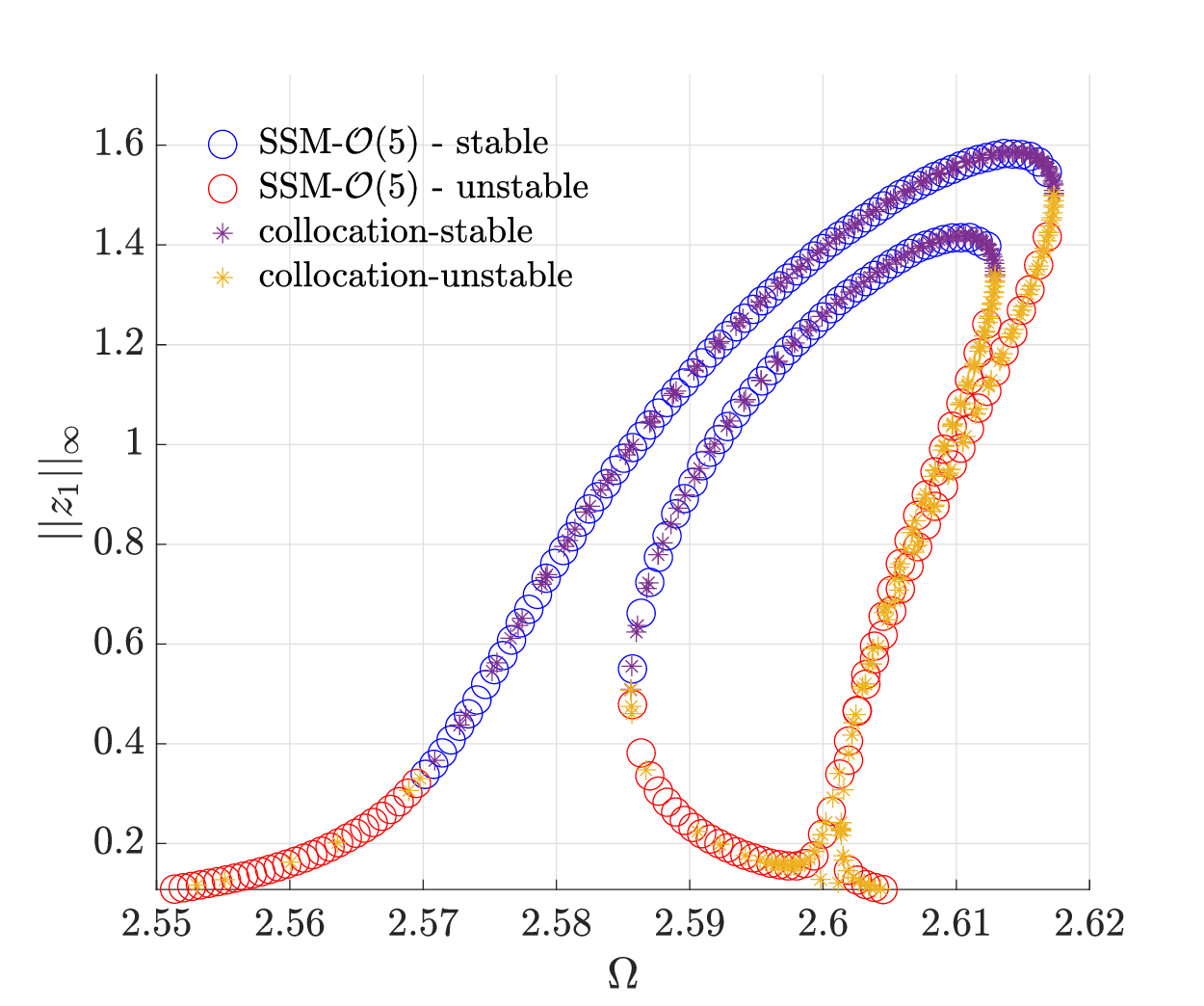}
		}
		\hfill
		\subfloat[]{    \includegraphics[width=0.4\textwidth]{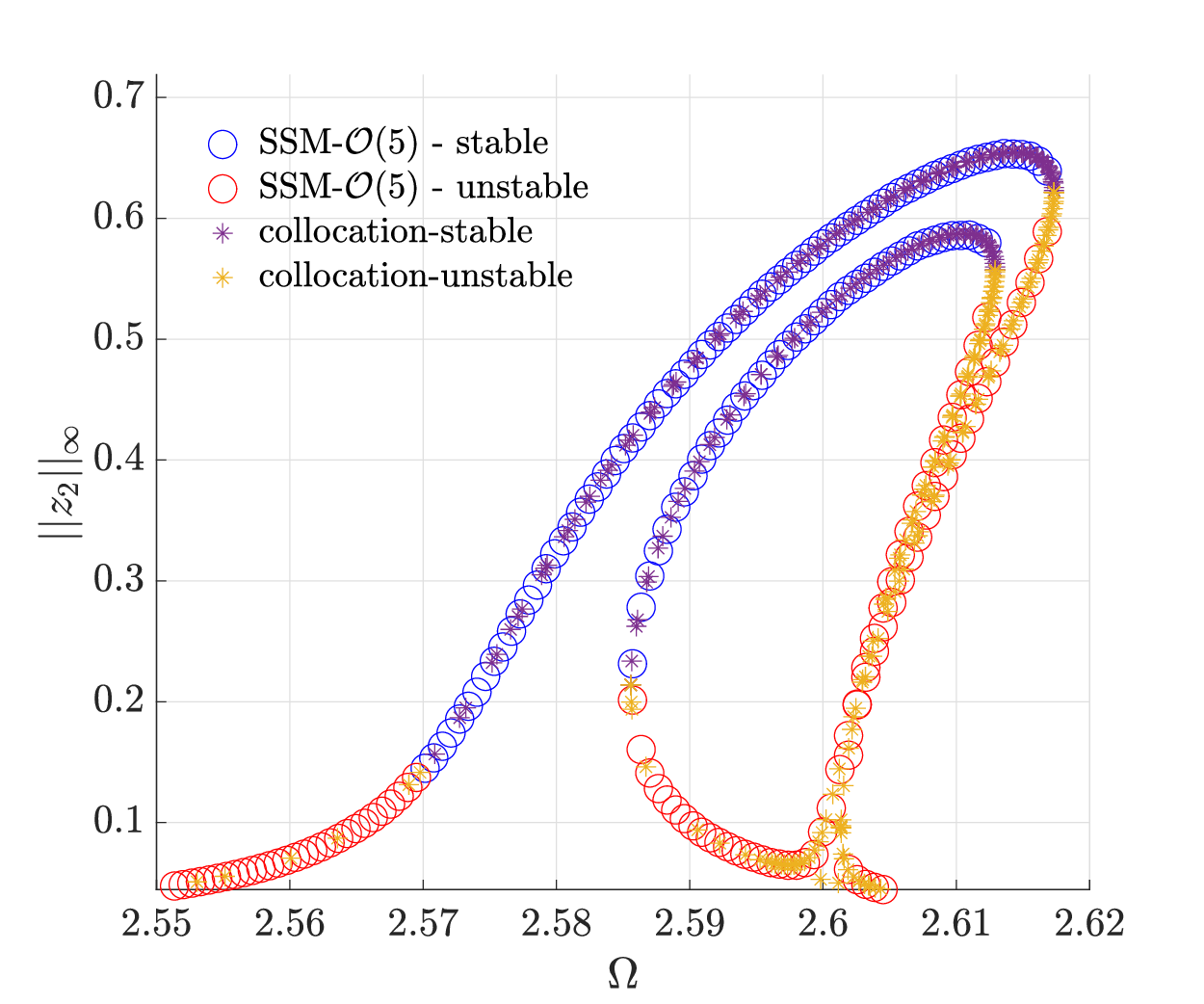}
		}
		\caption{Forced response curve of the self-excited oscillator. The forced response obtained from a fifth-order approximation of the dynamics on the non-autonomous SSM accurately describes the full system response. Results are verified using \texttt{po}-toolbox of \textsc{coco}.} \label{fig:SelfExcite_FRC}
	\end{figure}

	\begin{enumerate}
		\item System setup:
		Using the standardized form~\eqref{eq:DS}, the system~\eqref{eq:self-1}-\eqref{eq:self-2} can be rewritten as 
		\begin{align}
			&    \bm{M} = \begin{pmatrix}1 & 0 \\ 0 & 1   \end{pmatrix}, \quad 
			\bm{K} = \begin{pmatrix} \Tilde{k} +1 & - \Tilde{k} \\ - \Tilde{k} M & \Tilde{k} M \end{pmatrix},\\    
			&    \bm{Q} = \Tilde{k}\begin{pmatrix} - 1 &  1 \\   M & -  M \end{pmatrix},\quad
			\bm{C} = \begin{pmatrix} 0 & 0 \\ 0 & -\Tilde{c}  \end{pmatrix},\\
			&   \bm{f}(\bm{y},\dot{\bm{y}}) = \begin{pmatrix} \Tilde{\kappa} y_1^3 \\ \Tilde{\gamma} \dot{y}_2^3  \end{pmatrix}, \quad
			\bm{g}(\Omega t) 
			= q \begin{pmatrix}  \cos{\Omega t} \\ 0  \end{pmatrix}.
		\end{align}
		To solve the invariance equations, we expand the nonlinearity and forcing in Taylor and Taylor-Fourier series. Details on this procedure can be found in Appendix~\ref{App:ForceExpansion}.
		\item Choose master subspace: Linear analysis of the corresponding first-order ODE~\eqref{eq:DS_lin} provides the eigenvalue pairs $\lambda_{1},\conj{\lambda_1} \approx 0.0013 \pm 2.5887 i$ and $\lambda_{2},\conj{\lambda_2} \approx 0.0037 \pm 0.5463 i  $. We choose to analyze the resonant excitation of the first mode with excitation frequency values $\Omega \approx \Imag{\lambda_1}$ and construct a time-periodic SSM over the spectral subspace $\mathcal{E} = \{\bm{v}_1, \conj{\bm{v}_1} \}$ .
		\item Autonomous SSM and reduced dynamics: The leading order autonomous invariance equation is solved by choosing 
		\begin{align}
			\bm{W}_{\bm{e}_1} = \bm{v}_1 ,\ \bm{W}_{\bm{e}_2} = \conj{\bm{v}_1},
			\\
			\bm{R}_{\bm{e}_1} = \begin{pmatrix}\lambda_1 \\ 0 \end{pmatrix}, \
			\bm{R}_{\bm{e}_1} = \begin{pmatrix} 0 \\ \conj{\lambda_1} \end{pmatrix}.
		\end{align}
		Subsequently, the autonomous invariance equation~\eqref{eq:Hom_Aut_Second} can be solved recursively to the desired order. Near-resonances between the master modes of the type \eqref{eq:ResCond_aut} are detected automatically and the autonomous reduced dynamics are transformed to their normal form accordingly.
		\item Assemble non-autonomous invariance equations: The autonomous SSM and reduced dynamics coefficients do not depend on the external excitation frequency. As this is not the case for non-autonomous coefficients, the nonautonomous invariance equation has to be solved for every forcing frequency $\Omega$. Given a set of forcing frequencies $\{ \Omega_1, \cdots , \Omega_z\}$, the nonautonomous invariance equation is solved up to the desired order independently for each frequency.  Near-resonances in the master subspace of the form \eqref{eq:ResCond_nonaut} are automatically detected at each order of computation, which determines the nontrivial coefficients of the reduced vector field according to a normal form style of parametrization. The coefficients of the reduced dynamics are used to construct the function $\bm{r}(\rho, \psi, \Omega)$ defined in eq.~\eqref{eq:PolarDynamics}, whose zero level set provides us the forced response according to Lemma~\ref{lem:FRC}. The resulting reduced dynamics up to cubic order at frequency $\Omega = 2.6$ rad/s can be written as
		\begin{align} \nonumber
			\dot{p} &= (1328.6 + 25887.3i)p 
			+ (-35.9 + 63.2i)p^2\Bar{p} 
			\nonumber \\ \nonumber
			& \ + (7.4  + 1152.5i) e^{2 i \phi}\Bar{p} 
			+ (0.2 - 0.8i)   e^{-i\phi}p^2
			\\ \label{eq:NormalForm_example} 
			& \ + (-0.4 +1.6i)  e^{i\phi}p\Bar{p}
			+ (1 - 1.4i) e^{- 2 i \phi}p^3, 
		\end{align}
		where we have used a rescaled time variable $s = 10^{-4}t$, and defined the phase variable $\phi = \Omega t$. The dynamics along the second reduced coordinate is simply given by the complex conjugate of~\eqref{eq:NormalForm_example}.
	\end{enumerate}

	FIG.~\ref{fig:SelfExcite_FRC} shows the FRC obtained from this procedure, where we observe a interesting self-intersecting loop in the FRC. Once again, we verified the SSM-based reduced results against the full system using the \texttt{po}-toolbox of \textsc{coco}, where the relevant continuation parameters were set according to TABLE \ref{tab:CocoParametersFRC} in Appendix \ref{App:COCO}. The computation time for this self-excited oscillator using a sequential implementation of the non-autonomous SSM computation is 3 minutes and 48 seconds. Meanwhile, the verification via continuation on the full system takes 3 minutes and 32 seconds. We observe that in this sequential implementation on a low-dimensional example, the ROM does not bring any computational savings relative to the full system in this low-dimensional example.

	Similar oscillator systems featuring self-excitation or parametric amplification have been shown to exhibit complicated forced response curves, including isolated branches of response~\cite{Warminski2DOF1995,Li2020}. To demonstrate the effectiveness of SSM-based ROMs in reproducing such complicated forced response curves, we have implemented further examples of self-excited oscillators with isolas~\cite{Warminski2DOF1995} and parametric amplifiers with nonlinear damping~\cite{Li2020}. These examples are openly available in SSMTool~\cite{SSMTool2}. Next, we discuss finite-element-type examples under parametric and external excitation. 

	\subsubsection{Bernoulli beam with parametric and external excitation}\label{sec:ExamplesFRCBB}
	We now return to the nonlinear Bernoulli beam example with the parameters listed in TABLE~\ref{tab:BB_model}. In addition to the parametric excitation, however, the free end is now forced externally as well. The resulting equations of motion for the discretised variables are now given by
	\begin{align}
		\bm{M}\ddot{\bm{y}}+\bm{C}\dot{\bm{y}}+\bm{K}\bm{y} + \epsilon\mu \cos 2\Omega t\bm{Q}   \bm{y}+ \bm{f}(\bm{y},\dot{\bm{y}})= \epsilon \bm{f}^0_{ext}\cos(\Omega t).
	\end{align}

	Here $\bm{f}(\bm{y},\dot{\bm{y}})$ again includes the nonlinear damping coefficient $\gamma$ of the damper, as well as the nonlinear stiffness $\kappa$ of the spring attached to the tip of the beam. $\bm Q$ denotes a projection onto the tip of the beam along the transverse direction. The harmonic external excitation vector $\bm{f}^0_{ext}$ acts on the tip of the beam, with a unit magnitude in the transverse direction. The damping coefficient is set to $\sigma = 1$. The external excitation amplitude and the relative parametric excitation amplitude are taken to be $\epsilon = 2 \times 10^{-3}$ and $\mu = 45 [N]$, respectively. The system is forced in resonance with the first mode with the eigenvalue pair
	
	\begin{align}
		\lambda_{1},\conj{\lambda_1} \approx -0.1238 \pm 6.9995 i.
	\end{align}
	
	\begin{figure}[h!]
		\centering
		\subfloat[]{
			\includegraphics[width=0.4\textwidth]{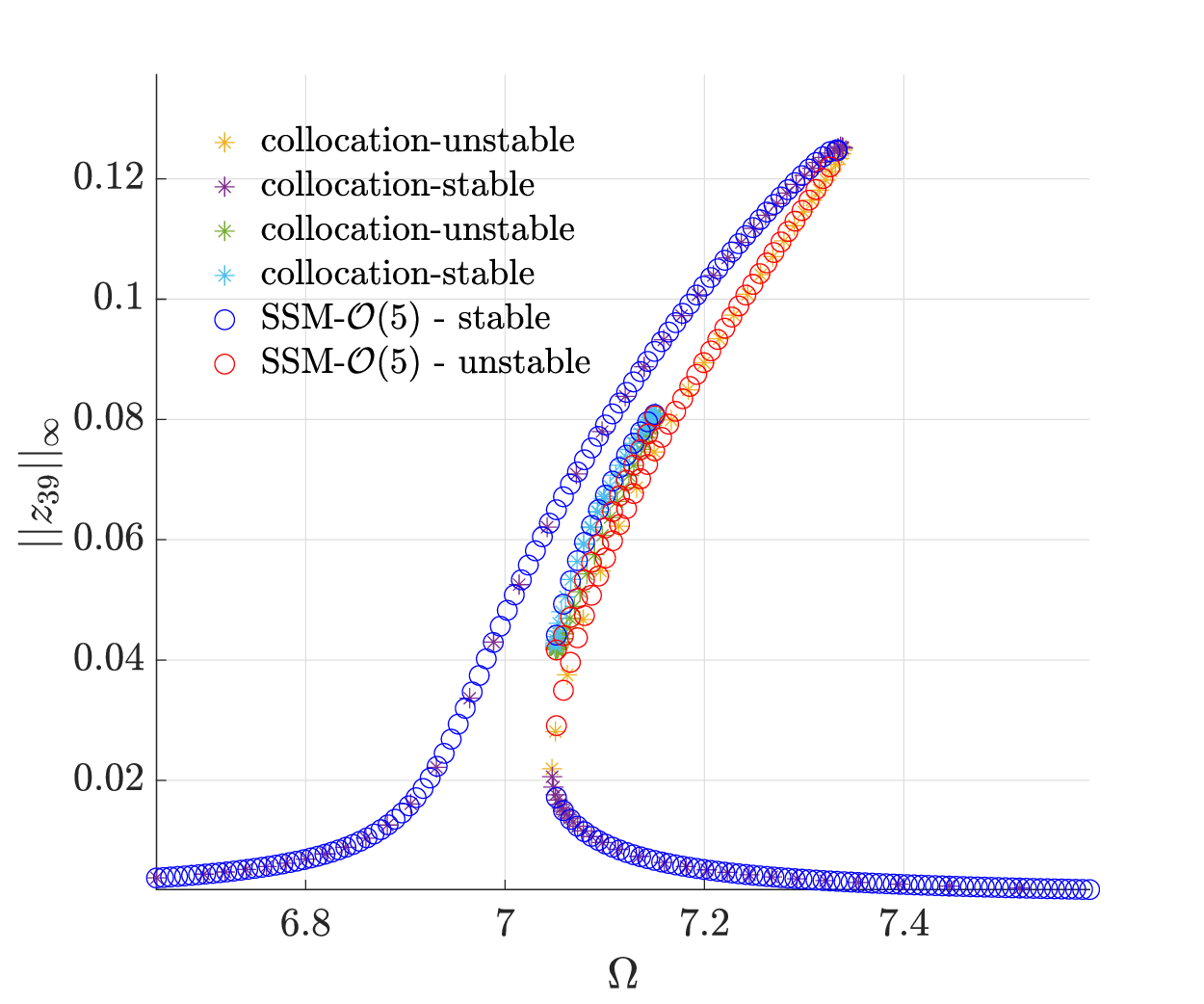}
			\label{fig:BB_external_a}
		}\hfill
		\subfloat[]{
			\includegraphics[width=0.4\textwidth]{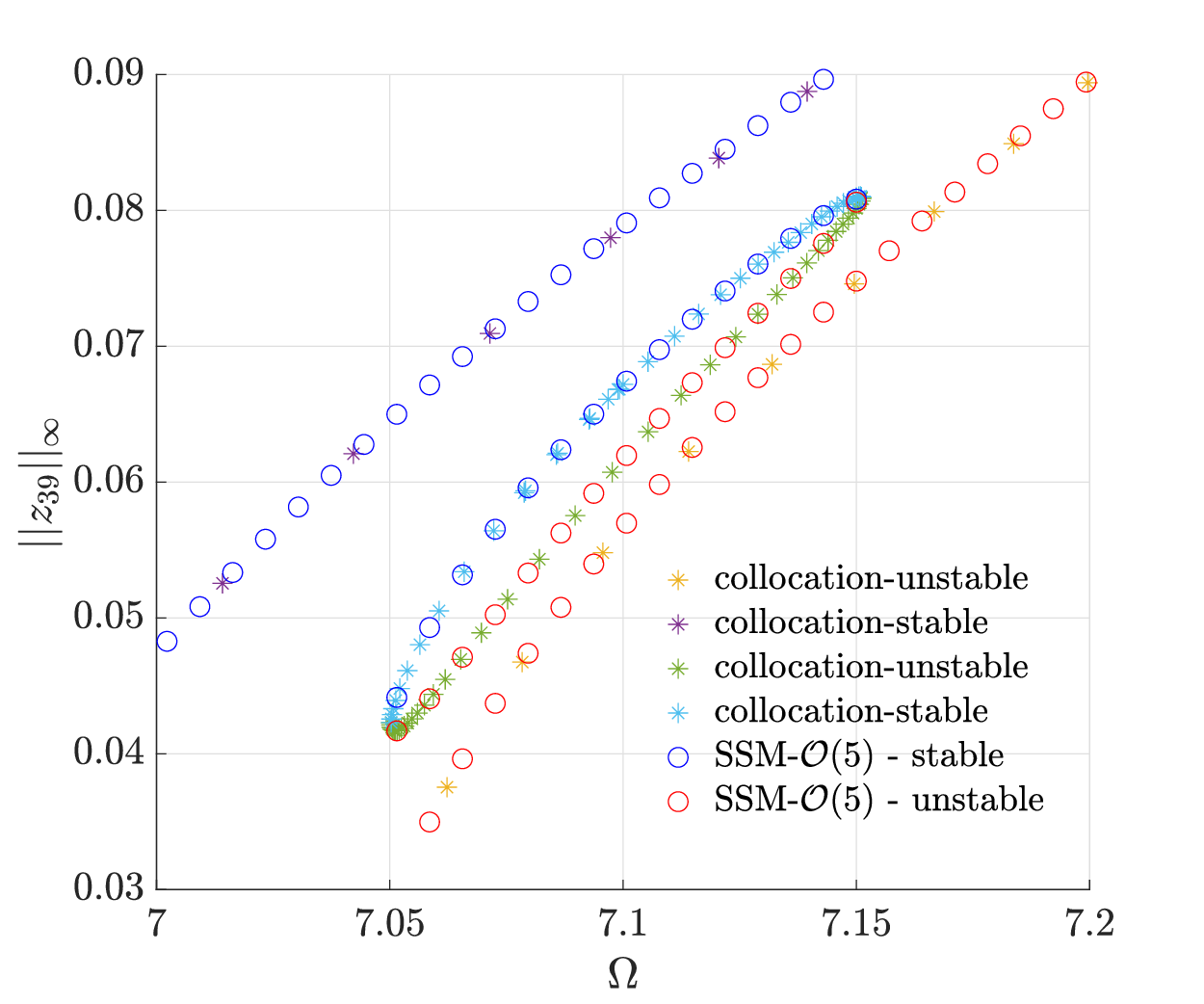}
			\label{fig:BB_external_b}
		} 
		\caption{(a) Forced response obtained for the externally excited Bernoulli beam with periodically varying stiffness actuation at the tip for $n = 40$ DOFs. The nonlinear spring attached to the beam leads to a hardening-type response. Due to the interplay of external and parametric excitation, an isola arises for sufficiently large values of the parametric excitation amplitude $\mu$ ($\mu = 45 [N]$ for this simulation). The isola is independently verified using an initial condition provided by the results of the SSM computation. (b) Close-up view of the isolated response branch verified against the full system in a separate continuation run. } \label{fig:BB_external}
	\end{figure}
	
	The excitation frequencies for parametric and external excitation are assumed to have a $2:1$ relationship. Therefore, the direct forcing is resonant with the first eigenmode of the beam, whereas the parametric term is operated in the regime of principal parametric resonance. FIG.\ref{fig:BB_external_a} shows the forced response for a 20 element model with $n=40$ DOFs, where we observe an isolated branch of forced response under the main branch for the chosen parameter values. The emergence of such an isola with an increasing parametric excitation amplitude $\mu$ has been studied in systems where parametric resonance and external excitation interact\cite{Rhoads2009, Aghamohammadi2022DynamicExcitations}. Indeed, this isola is associated to the coexistence of the two resonances, i.e., the parametric 2:1 resonance and the direct 1:1 resonance of the external excitation with the eigenvalues of the first mode.
	
	We verify the FRC obtained from the SSM-based ROM against the full system using the \texttt{po} toolbox of \textsc{coco} with parameters set according to TABLE \ref{tab:CocoParametersFRC}. The isolated branch of the forced response, which appears inside the main branch of the FRC, is not detected by the continuation run. We use an initial solution on the isola provided by the SSM-based ROM as an input for \textsc{coco} to independently verify the isolated branch for the full system. Verification of the main branch took a total of 4 hours and 40 minutes, while the isolated response needed 15 hours and 17 minutes. At the same time, using Lemma \ref{lem:FRC}, the SSM-based ROM takes only 48 seconds for FRC extraction with a sequential implementation.

	Next, we repeat our analysis for larger system sizes resulting from higher-order discretizations of the beam. The corresponding computation times are depicted in FIG.~\ref{fig:CompTimeExternalBB}. All computations for this example were performed on the Euler Cluster of ETH Zürich due to the long runtimes of the collocation routine. We increased the system size up to 25,000 elements, resulting in up to $n=50000$ DOFs. The sequential computation of the FRC, including the repeated computation of the non-autonomous SSM and its reduced dynamics for each forcing frequency $\{\Omega_1, \cdots , \Omega_z \}$, now takes 1 hour and 19 minutes. 
	\begin{figure}[h!]
		\centering
		\includegraphics[width=0.5\textwidth]{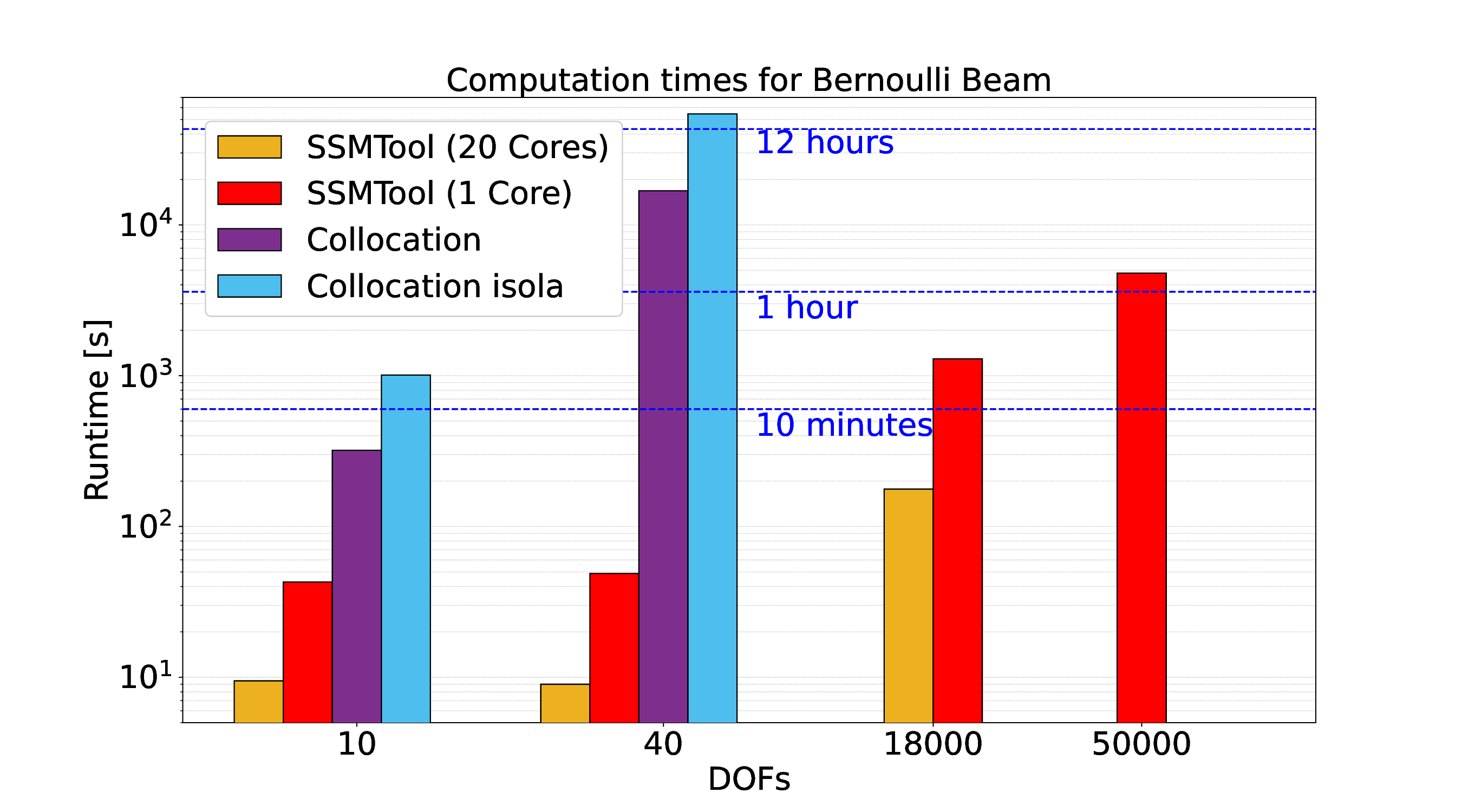}
		\caption{Computation times for the FRC of the externally excited Bernoulli beam example. Results from the SSM-based ROMs were verified against the full system using the \texttt{po}-toolbox of \textsc{coco} (see FIG.~\ref{fig:BB_external}). Computation for isolas in the forced response are reported separately as they were verified in independent continuation runs.} \label{fig:CompTimeExternalBB}
	\end{figure}

	\section{Conclusion}
	\label{sec:conc}
	We have developed an algorithm for computing non-autonomous SSMs and their reduced dynamics up to any desired order of accuracy in externally and parametrically forced systems. In doing so, we have taken into account prior developments and extended them to the case of parametrically excited mechanical systems with nonlinearities of arbitrary order and magnitude, including nonlinear damping. We have put forward automated routines for computing SSMs for second-order mechanical systems in particular and first-order dynamical systems in general. In our derivations and computations, we have made use of the multi-index notation for the expansions and carried out computations in physical coordinates using minimal eigenvectors of the full system. All this optimizes memory requirements and avoids modal transformations that are computationally infeasible for high-dimensional systems. 
	
	Using the SSM-reduced models as the basis for parameter continuation, we have shown how to compute the stability diagrams for the principal resonance of periodic, parametrically excited systems. Furthermore, we have shown that in systems under additional external excitation, the forced response can be obtained by simply locating the zeros of a two-dimensional function, i.e., without numerical continuation. As a result, it is now possible to parallelize the computation of the periodic responses attached to two-dimensional, nonautonomous SSMs for systems subject to periodic external and parametric excitation over any desired range of forcing frequencies. All these algorithms and their applications to our examples are available in the open-source MATLAB package SSMTool 2.4~\cite{SSMTool2}.
	
	With examples treating coupled Mathieu equations, a parametrically excited Bernoulli beam, and an axially forced prismatic beam, we have illustrated how SSM-based model reduction can be used to obtain accurate stability diagrams and FRCs in parametrically excited systems at a significantly reduced computational cost. Indeed, we analyzed a number of typical low-dimensional models studied previously in the literature and showed that SSM-based ROMs provide accurate predictions even if they do not necessarily yield computational savings. In contrast, SSM-reduced models also provide major computational advantages over the full system in more complex examples such as a Bernoulli beam attached to a nonlinear damper and spring, and subject to external and parametric excitation. In that example, we have uncovered an internal isola, which is an effect that has been observed for one-dimensional models subject to parametric resonance and external excitation~\cite{Rhoads2009, Aghamohammadi2022DynamicExcitations}. Ours appears to be the first demonstration of this phenomenon in finite element models. We showcased the power of this mathematically grounded SSM-reduction by increasing system sizes beyond the capabilities of collocation-based continuation methods when applied to the full system. We also demonstrated how parallelization can be used to reduce the computation time by an order of magnitude in SSM-based FRC calculations.
	
	Future work may target systems with even higher dimensionality, where the beneficial effects of model reduction should weigh in even more stunningly. Areas of related applications include various types of parametrically driven MEMS devices, such as comb-driven micromirrors, among others \cite{Frangi2017, Polunin2017, Welte2013}. Additionally, the developments presented here open the door for an in-depth analysis of the interaction of more complex time-dependent phenomena, such as parametric resonance in MEMS systems with internal resonances~\cite{Gobat2021ReducedResonance}.

	\section*{Data availability}
	The code used to generate the numerical results included in this paper is available as part of the open-source MATLAB package SSMTool, available at \url{https://doi.org/10.5281/zenodo.4614201}.
	
	\begin{acknowledgments}
		We declare no conflict of interest. We thank Mingwu Li for his helpful comments and advice on the continuation toolbox \textsc{coco}.
	\end{acknowledgments}

	\appendix

	\section{SSM Computation in Multi-Index format}\label{app_sec:SSMComp}
	In this section of the appendix, we provide the mathematical details for SSM computations in physical coordinates with the use of multi-indices, including the proofs for Lemmas \ref{lem:HomAut} - \ref{Lem:HomNonautVel}. We begin by reviewing the multi-index formalism and its advantages. We then give an in-depth derivation of the expressions that determine the SSM and its reduced dynamics parametrizations. We present these results for second- and first-order ODEs

	\subsection{Multi-Index calculus}\label{app_sec:MathMultiIndex}
	A multi-index of order $m \in \mathbb{N}$ is a vector $\bm{m} \in \mathbb{N}^M$ such that $||\bm{m}||_1 = m$.
	The unit multi-indices with a single nonzero entry in the $j^{\mathrm{th}}$ position are denoted by $\bm{e}_j$. The addition and subtraction of multi-indices are defined element-wise. Using multi-indices, multivariate monomials can be conveniently expressed as 
	\begin{equation}
		\bm{p^m} = p_1^{m_1} \cdots p_M^{m_M}.
	\end{equation}
	Likewise, a time derivative of such monomials are conveniently expressed as 
	\begin{equation}
		\frac{d\bm{p^m}}{dt} 
		=
		\sum_{j=1}^M \partial_{p_j} \bm{p^m}\dot{p}_j
		=
		\sum_{j=1}^M m_j\bm{p}^{\bm{m-e}_j } \dot{p}_j.
	\end{equation}
	\subsubsection*{Composition Coefficients}
	In SSM computations, composition of nonlinear forces with the nonlinear SSM parametrization has to be performed. For this, we need to compute scalar powers $s$ of  power series written in multi-index notation, i.e., $(\sum_{\bm{m}} W^i_{\bm{m}}\bm{p^m})^s$. A convenient method, which offers a recursive definition of the resulting coefficients, uses the concept of radial derivative discussed by Haro et al.~\cite{Haro2016}. Ponsioen et al.~\cite{Ponsioen2020} use this method to express the coefficients $H_{s,\bm{h}}$ in the expansion 
	\begin{equation} 
		\big( \sum_{\bm{m}}W^i_{\bm{m}}\bm{p^m} \big)^s =  \sum_{\bm{h}}H^i_{s,\bm{h}}\bm{p^h}
	\end{equation}
	using the recursive definition
	\begin{equation}
		H^i_{s,\bm{h}} =  \frac{s}{h_j} \sum_{\substack{ \bm{m \leq h} \\}} m_j W^i_{\bm{m}} H^i_{s-1,\bm{h-m}},
	\end{equation}
	where $j$ is the index of the smallest nonzero element in $\bm{h}$.
	To simplify the calculation of this recursion, it is useful to look at some special cases under the assumption that the coefficients $W^i_{\bm{m}}$ are the autonomous coefficients of the $i^{\mathrm{th}}$ coordinate of the SSM expansion. The SSM parametrization does not contain constant terms, which means that all composition coefficients are zero for $|\bm{m}|= 0$. The properties of $H^i_{s,\bm{h}}$ are then
	\begin{itemize}
		\item $H^i_{1,\bm{0}} = 0$,
		\item $H^i_{1,\bm{h}} = W^i_{\bm h}$,
		\item $H^i_{0,\bm{h}} = 0 $ for all $\bm{h} \neq \bm{0}$, 
		\item $H^i_{0,0} = 1 $.
	\end{itemize} 
	We generalize this concept to vector-valued power series by collecting all terms corresponding to the same multi-index $\bm{h} \in \mathbb{N}^M$ and defining a new composition coefficient $H_{\bm{n}, \bm{h}} $ such that
	\begin{align}
		(  \sum_{\bm{m}\in \mathbb{N}^M} \bm{W}_{\bm{m}} \bm{p}^{\bm m} ) ^{\bm n} 
		&= \prod_{i=1}^N ( \sum_{\bm{m}\in \mathbb{N}^M} W^i_{\bm{m}} \bm{p}^{\bm m} )^{n_i}
		\\   &= \prod_{i=1}^N ( \sum_{\bm{u}\in \mathbb{N}^M} H^i_{n_i,\bm{u}} \bm{p}^{\bm u})
		\\   &= \sum_{\bm{h} \in \mathbb{N}^M} H_{\bm{n}, \bm{h}} \bm{p}^{\bm h}.
	\end{align}
	In more detail:
	\begin{align}
		H_{\bm{n},\bm{h}} :=
		\sum_{\bm{u}_1 \in \mathbb{N}^M } 
		\cdots 
		\sum_{\bm{u}_{2n} \in \mathbb{N}^M  }
		H^1_{n_1,\bm{u}_1}
		\cdots 
		H^{2n}_{n_{2n},\bm{u}_{2n}}
		\bigg|_{\sum \bm{u}_i = \bm{h}}.
	\end{align}
	Although a vectorization of this computation provides a very efficient routine for the composition of power series, it also poses a major bottleneck in terms of memory and time requirements in the SSM computation proposed here. 
	\subsubsection*{Redundancy in tensor notation}\label{App:TensorRedundancy}
	We now discuss the computational disadvantages of a tensor-based notation compared to the use of multi-indices. Let $\bm{p} \in \mathbb{C}^2$ and consider the Taylor-expansion of a nonlinear function $\bm{W}(\bm{p})$ in the tensor notation as
	\begin{align}
		\label{eq:tensor_exp}
		\bm{W}(\bm{p}) = \sum_{i \in \mathbb{N}} \bm{W}_i \bm{p}^{\otimes i }.
	\end{align}
	Specifically, the second term in the expansion~\eqref{eq:tensor_exp} can be represented by a $2 \times 4$ matrix $\bm{W}_2$ acting on the Kronecker product $  \bm{p} \otimes \bm{p}$ as 
	\begin{align}
		\bm{W}_2 \bm{p} \otimes \bm{p}
		= 
		\begin{bmatrix}
			W_{11} & W_{12} & W_{13} & W_{14} \\
			W_{21} & W_{22} & W_{23} & W_{24}
		\end{bmatrix}
		\begin{bmatrix}
			p_1 p_1 \\ p_1 p_2 \\ p_2 p_1 \\ p_2 p_2
		\end{bmatrix}
		\\ \label{eq:tensor_unique}
		=
		\begin{bmatrix}
			W_{11} p_1^2 +  (W_{12} + W_{13}) p_1 p_2 + W_{14} p_2^2 \\
			W_{21} p_1^2 +  (W_{22} + W_{23}) p_1 p_2 + W_{24} p_2^2
		\end{bmatrix}
	\end{align}
	Examining Eq. \eqref{eq:tensor_unique}, we note that only 6 of the 8 coefficients contained in the matrix $\bm{W}_2$ are needed to uniquely represent $\bm{W}_2 \bm{p} \otimes \bm{p}$. Indeed, the same term in multi-index notation, 
	\begin{align}
		\bm W ( \bm p ) = \sum_{\bm m \in \mathbb {N}^M} \bm W_{\bm m} \bm p^{\bm m},
	\end{align}
	is characterised by 6 unique multi-index coefficients given as
	\begin{align}
		\bm{W}_{\big(\substack{ 2\\ 0}\big)}=
		\begin{bmatrix}
			W_{11}  \\
			W_{21} 
		\end{bmatrix}, 
		\quad
		\bm{W}_{\big(\substack{ 1\\ 1}\big)}=
		\begin{bmatrix}
			W_{12}+W_{13}  \\
			W_{22}+W_{23} 
		\end{bmatrix},
		\quad
		\bm{W}_{\big(\substack{ 0\\ 2}\big)}=
		\begin{bmatrix}
			W_{13}  \\
			W_{23} 
		\end{bmatrix}
	\end{align}
	This redundancy in the number of coefficients utilized by the tensor notation relative to the multi-index notations grows with the order of expansion. Consider, for instance, the case of a 2-dimensional SSM, where, at any order $\Gamma$, the expansion is characterized by a tensor with $N  2^{\Gamma +1 }$ coefficients. However, we have only $ \Gamma + 1$ unique monomials associated with these coefficients. Hence, while the number of unique monomials grows linearly with the order $\Gamma$, the corresponding number of tensor coefficients grows exponentially in $\Gamma$.
	
	%
	We remark, however, that the tensors can be specially constructed to contain only a minimal number of nonzero coefficients equal to that in the multi-index notation. This is possible by exploiting the inherent symmetries in monomials associated with the expansion, e.g., by using a sparse or symmetric definition of tensor arrays. In practice, the SSM coefficients which are determined numerically by solving invariance equations may still lead to densely populated or assymmetric tensor arrays. At each order of computation, the computed coefficients would, therefore, require recasting to this minimal form, resulting in unnecessary computational overhead. 
	
	Furthermore, it turns out that the invariance equations~\eqref{eq:Hom_Aut_Second}, \eqref{eq:HomNonAut_Second} that need to be solved for SSM expansion coefficients are decoupled for individual multi-indices. 
	For computing a two-dimensional autonomous SSM at order $\Gamma$ in a vectorized tensor-based notation adopted in Ref.~\onlinecite{Jain2021}, a linear system of dimension $N  2^{\Gamma +1 }$ needs to be solved. Using multi-indices instead, only $\Gamma + 1$ linear systems of dimension $n$ each need to be solved. One can also solve the linear systems associated with each tensor index independently, as remarked in Ref.~\onlinecite{Jain2021} and as done for second-order systems in Refs.~\onlinecite{Vizzaccaro2022, Opreni2022}. In that case, a total of $2^{\Gamma +1 }$ systems of linear equations of dimension $n$ must be solved in the tensor-index notation. Hence, for two-dimensional SSMs, the use of multi-indices leads to an exponential decrease in the number of linear systems that needs to be solved relative to the tensor notation as the expansion order increases.
	
	Thus, we have shown the tensor notation result in redundant computational costs relative to the multi-index notation.
	\subsubsection*{Examples of nonlinear force expansion}\label{App:ForceExpansion}
	We expand the excitation and nonlinearity in the multi-index notation as
	\begin{align}
		\label{eq:nonlinearity_exp}
		\bm{f}(\bm{y},\dot{\bm{y}}) 
		&=  \sum_{\bm{a,b}\in \mathbb{N}^n}          
		\bm{f}_{\bm{a,b}} \bm{y^a}\dot{\bm{y}}^{\bm b},
		\\
		\label{eq:force_exp}
		\bm{g}(\bm{y},\dot{\bm{y}},\boldsymbol\phi) 
		&=  \sum_{\bm{a,b}\in \mathbb{N}^n}          
		\sum_{\boldsymbol\kappa \in \mathbb{Z}^k} \bm{g}_{\bm{a,b},\boldsymbol\kappa } e^{i \langle \boldsymbol\kappa , \boldsymbol\phi  \rangle}\bm{y^a}\dot{\bm{y}}^{\bm b},
	\end{align}
	where the coefficients $\bm{f}_{\bm{a,b} },~\bm{g}_{\bm{a,b}, \bm \kappa} \in \mathbb{C}^n$. Note that these coefficients can be recast in a first-order form simply by associating a multi-index $\bm{n} \in \mathbb{N}^N$ to each pair $\bm{a,b}$ such that in the phase space, $\bm{z^n} = \bm{y^a}\dot{\bm{y}}^{\bm b}$. The first-order version of the two expansions~\eqref{eq:nonlinearity_exp}, \eqref{eq:force_exp} can then be written as
	\begin{align}
		\label{eq:expF}
		\bm{F}(\bm{z}) 
		&=  \sum_{\bm{n}\in \mathbb{N}^N}          
		\bm{F}_{\bm{n}} \bm{z}^{\bm n}
		, \quad 
		\bm{F}_{\big(\substack{ \bm a\\ \bm b}\big)} =          \begin{bmatrix}
			\bm{f}_{\bm a ,\bm b}\\ \bm{0}_n
		\end{bmatrix},
		\\ \label{app_eq:Force_expand_FirstOrder}
		\bm{G}(\bm{z},\boldsymbol\phi) 
		&=  \sum_{\bm{n}\in \mathbb{N}^N}          
		\sum_{\boldsymbol\kappa \in \mathbb{Z}^k} \bm{G}_{\bm{n},\boldsymbol\kappa } e^{i \langle \boldsymbol\kappa , \boldsymbol\phi  \rangle}\bm{z}^{\bm n}
		,\quad
		\bm{G}_{\big(\substack{ \bm a\\ \bm b}\big),\boldsymbol\kappa} =          \begin{bmatrix}
			\bm{g}_{\bm{a,b},\boldsymbol\kappa } \\ \bm{0}_n
		\end{bmatrix}.
	\end{align}
	As an example of the above notation, we consider the forcing in the self-excited oscillator system~\eqref{eq:self-1}-\eqref{eq:self-2}. The forcing term contains external and linear parametric excitation given by
	\begin{align}
		\bm{g}(\Omega t, \bm{y}) 
		&= \cos{\Omega t} \begin{pmatrix} q  \\ 0  \end{pmatrix} 
		+  \lambda \cos{2\Omega t} 
		\begin{pmatrix} y_1 - y_2 \\ -M (y_1 - y_2) \end{pmatrix} 
		\\
		&= (e^{i\Omega t} + e^{-i\Omega t}) \begin{pmatrix} q/2  \\ 0  \end{pmatrix} 
		\nonumber \\
		&+  (e^{i2\Omega t} + e^{ -i2\Omega t} ) \frac{\lambda}{2}  
		\begin{pmatrix} y_1 - y_2 \\ -M (y_1 - y_2) \end{pmatrix} .
	\end{align}
	Here, the set of harmonics is given as $K = \{1, -1, 2, -2 \}$. As the system contains only 2 DOFs, the spatial multi-indices are two dimensional and the vectors $\bm{g}_{\bm{n},\kappa}$ that characterise the expansion~\eqref{eq:force_exp} are given as
	\begin{align}
		\bm{g}_{\bm{0},1} &= \bm{g}_{\bm{0},-1} =\begin{pmatrix}   q/2  \\ 0 \end{pmatrix},
		\\
		\bm{g}_{\bm{e}_1,2} &= 
		\bm{g}_{\bm{e}_1,-2} = \begin{pmatrix} \lambda/2  \\ - M \lambda /2 \end{pmatrix},
		\\
		\bm{g}_{\bm{e}_2,2} &= 
		\bm{g}_{\bm{e}_2,-2} = \begin{pmatrix} -\lambda/2  \\  M \lambda /2 \end{pmatrix}.
	\end{align}
	\subsection{Second order computation}\label{app_sec:SSMComp_2ndorder}
	\subsubsection*{Autonomous SSM Computation}
	In order to compute the SSMs and their reduced dynamics parametrizations for second-order mechanical systems, we express them as Taylor series~\eqref{eq:AutSSMParam},\eqref{eq:AutSSMParamVel} and \eqref{eq:AutRDParam}. Substituting these expansions into the autonomous invariance equation \eqref{eq:InvEq_aut} and collecting unique monomial terms, we obtain the system~\eqref{eq:Hom_Aut_Second} of the linear equations for each multi-index $\bm m$ as we detail below. At the leading order, i.e, for monomial multi-index $\bm{p}^{\bm{e}_i}$, we obtain 
	\begin{align}\label{app_eq:SecondOrderSystemLinearInvEq}
		\begin{bmatrix} \bm{C} & \bm{M} \\ \bm{M} & \bm{0} \end{bmatrix}
		\sum_{j=1}^M
		\begin{bmatrix}
			\bm{w}_{\bm{e}_j} \\ \bm{\dot{w}}_{\bm{e}_j}
		\end{bmatrix}
		R^j_{\bm{e}_i}
		=			\begin{bmatrix} -\bm{K} & \bm{0} \\ \bm{0} & \bm{M} \end{bmatrix}
		\begin{bmatrix}
			\bm{w}_{\bm{e}_i} \\ \bm{\dot{w}}_{\bm{e}_i}
		\end{bmatrix}.
	\end{align}
	Eliminating the velocity variables $\bm{\dot{w}}_{\bm{e}_i}$ from system~\eqref{app_eq:2ndOrderSys_AutHighOrderInvEq}, we obtain
	\begin{align}
		\label{eq:leading_eig}
		\bm{M} \sum_{j=1}^M \sum_{k=1}^M \bm{w}_{\bm{e}_k} R^k_{\bm{e}_j} R^j_{\bm{e}_i}
		+ \bm{C} \sum_{j=1}^M \bm{w}_{\bm{e}_j}  R^j_{\bm{e}_i}
		+ \bm{K} \bm{w}_{\bm{e}_i}
		= \bm{0}.
	\end{align}
	From the eigenvalue problem~\eqref{eq:EigenprobSecondOrder}, we note that the choice $\bm{w}_{\bm{e}_j} = \bm  \phi_j$ and $R^j_{\bm{e}_j} = \delta_{ij} \lambda_i$ solves equation~\eqref{eq:leading_eig}. Together with eq.~\eqref{app_eq:SecondOrderSystemLinearInvEq} this solution choice for the leading-order displacement parametrization implies that $\bm{\dot{w}}_{\bm{e}_i} = \bm  \phi_i \lambda_i$.  Higher-order terms of the invariance equation~\eqref{eq:InvEq_aut} for a multi-index ${\bm m}$ are given as
	\begin{align}\label{app_eq:2ndOrderSys_AutHighOrderInvEq}
		\bigg(
		\begin{bmatrix} -\bm{K} & \bm{0} \\ \bm{0} & \bm{M} \end{bmatrix}
		& -
		\Lambda_{\bm m}
		\begin{bmatrix} \bm{C} & \bm{M} \\ \bm{M} & \bm{0} \end{bmatrix}
		\bigg) 
		\begin{bmatrix}
			\bm{w}_{\bm m} \\ \bm{\dot{w}}_{\bm m}
		\end{bmatrix}
		\nonumber \\ &=
		\sum_{j=1}^M 
		\begin{bmatrix} \bm{C} & \bm{M} \\ \bm{M} & \bm{0} \end{bmatrix} 
		\begin{bmatrix}
			\bm  \phi_j 
			\\
			\bm  \phi_j \lambda_{j}
		\end{bmatrix} 
		R^j_{\bm m}
		+
		\begin{bmatrix} \bm{1} & \bm{0} \\ \bm{0} & \bm{M} \end{bmatrix}
		\begin{bmatrix}
			{\bm Y}_{\bm{m}} \\ {\bm V}_{\bm m}
		\end{bmatrix}, 
	\end{align}
	where we define 
	\begin{align}
		\Lambda_{\bm m} &:= \bm  \Lambda \cdot \bm{m},\\
		{\bm Y}_{\bm m} 
		&:= 
		\sum_{j=1}^M  \sum_{\substack{ \bm{u,k} \in \mathbb{N}^M \\ 1<u<m \\ \bm{u+k} - \bm{e}_j = \bm{m}   }}
		\big(
		\bm{M} \bm{\dot{w}}_{\bm u}
		\big)
		u_j R^j_{\bm k}
		+\bm{C}\bm{V}_{\bm m}
		- \sum_{\bm{n}\in \mathbb{N}^N} \bm{f}_{\bm n} H_{\bm{n},\bm{m}},
		\\
		{\bm V}_{\bm m} &:= 
		\sum_{j=1}^M  \sum_{\substack{ \bm{u,k} \in \mathbb{N}^M \\ 1<u<m \\ \bm{u+k} - \bm{e}_j = \bm{m}   }}
		\bm{w}_{\bm u} u_j R^j_{\bm k}.
	\end{align}
	We rewrite the second set of equations in system~\eqref{app_eq:2ndOrderSys_AutHighOrderInvEq} to obtain an explicit expression for the velocity parametrization as 
	\begin{align}
		\dot{\bm{w}}_{\bm m}
		= 
		\Lambda_{\bm m} \bm{w}_{\bm m}
		+
		\sum_{j=1}^M  \bm  \phi_j R^j_{\bm m}
		+  {\bm V}_{\bm m},
		\label{app_eq:Relation_Aut}
	\end{align}
	The velocity parametrization depends purely on multiplicative terms of the displacement parametrization and the reduced dynamics. Indeed, the expression~\eqref{app_eq:Relation_Aut} is identical to the general result of Lemma~\ref{lem:HomAutVel}, which we prove later.
	Now we utilize relation~\eqref{app_eq:Relation_Aut} to rewrite $\bm Y_ {\bm m}$ in terms of the displacement parametrization and substitute the resulting expression into the first set of equations in system~\eqref{app_eq:2ndOrderSys_AutHighOrderInvEq} to obtain
	\begin{align}\label{app_eq:Hom_aut}
		&\overbrace{  
			\bigg(
			\bm{K}  
			+ \Lambda_{\bm m} \bm{C} 
			+ (\Lambda_{\bm m})^2 \bm{M}
			\bigg)
		}^{=: \bm{L}_{\bm m}}
		\bm{w}_{\bm m}
		=
		\bm{D}_{\bm m}\bm{R}_{\bm m} + 
		\bm{C_m},
	\end{align}
	where $\bm{C_m} :=-\Lambda_{\bm m} \bm{M} {\bm V}_{\bm m}- {\bm Y}_{\bm m}$ and $\bm{D}_{\bm m}\in \mathbb{C}^{n\times M}$ is a matrix whose $j^{\mathrm{th}}$ column is given by $(\bm{D}_{\bm m})_{j} := -((\Lambda_{\bm m} + \lambda_j) \bm{M} +\bm{C})\bm  \phi_j$ for every $ j=1,\dots, M$.  This concludes the proof of Lemma~\ref{lem:HomAut}.
	
	Now we prove Lemma~\ref{lem:HomAutVel} by applying the chain rule to compute the velocity parametrization and using the fact that the autonomous reduced dynamics trajectories satisfy $\dot{\bm{p}} = \bm{R(p)}$. Finally, we collect the terms that sum up to the same multi-index $\bm m$ as
	\begin{align}
		\dot{\bm w}_{\bm m} = \frac{d \bm w}{dt} \Bigg|_{\bm m} = [D\bm w(\bm p) \dot{\bm p}]_{\bm m} 
		&= [D\bm w(\bm p) \bm R (\bm p)]_{\bm m} 
		\\
		\label{eq:aut_vel}&= \sum_{j=1}^M  \sum_{\substack{ \bm{u,k} \in \mathbb{N}^M \\  \bm{u+k} - \bm{e}_j = \bm{m}   }}
		\bm{w}_{\bm u} u_j R^j_{\bm k}
	\end{align}
	Indeed, equation~\eqref{eq:aut_vel} can be verified against the standard invariance equation given in eq.~\eqref{app_eq:Relation_Aut}.
	\paragraph*{Reduced dynamics}
	The linear system~\eqref{eq:Hom_Aut_Second} contains $n$ equations for each multi-index $\bm m$ with $n$ unknowns associated with the displacement parametrization $\bm w_{\bm m}$ and additional $M$ unknowns associated with the reduced dynamics parametrization $\bm R_{\bm m}$. This underdeterminacy of system~\eqref{eq:Hom_Aut_Second} allows different solution choices for the reduced dynamics parametrization. Specifically, a normal-form style parametrization is appealing when (near)-resonances are present among the master subspace eigenvalues. Indeed, for an eigenvalue $\lambda_i$ associated to the master subspace, a near-resonance of the form
	\begin{align}\label{app_eq:NearResonance}
		\lambda_i \approx \Lambda_{\bm m}
	\end{align}
	results in the coefficient matrix $\bm {L_m}$ being nearly singular. To ensure (robust) solvability of system~\eqref{eq:Hom_Aut_Second}, the coefficients $\bm R_{\bm m}$ must be chosen such that the right-hand side of system~\eqref{eq:Hom_Aut_Second} is orthogonal to the (near) kernel of $\bm {L_m}$. In particular, we require (see Jain and Haller\cite{Jain2021}) 
	\begin{align} 
		\bm  \theta^*_i
		\bigg(
		\bm{K}  
		+ \Lambda_{\bm m} \bm{C} 
		+ (\Lambda_{\bm m})^2 \bm{M}
		\bigg) \bm{w}_{\bm m}
		=
		0,
	\end{align}
	which directly provides us the essentially nonzero reduced dynamics coefficients $R^i_{\bm{m}}$ in the normal-form parametrization style as 
	\begin{align}\label{app_eq:RD_Aut_Second}
		\bm  \theta^*_i
		(  (\Lambda_{\bm m} + \lambda_i) \bm{M}   
		+\bm{C}  ) \bm\phi_i  
		R^i_{\bm m}
		= 
		-\bm  \theta^*_i 
		(
		\Lambda_{\bm m} \bm{M} {\bm V}_{\bm m}
		+ {\bm Y}_{\bm m}
		).
	\end{align}
	
	\subsubsection*{Non-autonomous SSM}
	Collecting terms at $\mathcal{O}(\epsilon)$ in the invariance equation~\eqref{eq:InvEq}  corresponding to a given monomial multi-index $\bm{m}$ and Fourier harmonic $\bm{\kappa}$, we obtain
	\begin{align}\label{app_eq:2ndOrderSytemNonAutinvEq}
		\bigg(
		\begin{bmatrix} -\bm{K} & \bm{0} \\ \bm{0} & \bm{M} \end{bmatrix}
		&-
		\Lambda_{\bm{m}, \bm  \kappa}
		\begin{bmatrix} \bm{C} & \bm{M} \\ \bm{M} & \bm{0} \end{bmatrix}
		\bigg) 
		\begin{bmatrix}
			\bm{x}_{\bm{m}, \bm  \kappa } \\ \bm{\dot{x}}_{\bm{m},\bm \kappa}
		\end{bmatrix}
		\nonumber \\ &=
		\sum_{j=1}^M
		\begin{bmatrix} \bm{C} & \bm{M} \\ \bm{M} & \bm{0} \end{bmatrix}
		\begin{bmatrix}
			\bm  \phi_j 
			\\
			\bm  \phi_j \lambda_j
		\end{bmatrix}
		S^j_{\bm{m},\bm \kappa}
		+
		\begin{bmatrix} \bm{1} & \bm{0} \\ \bm{0} & \bm{M} \end{bmatrix}
		\begin{bmatrix}
			{\bm Y}_{\bm{m},\bm \kappa} \\ {\bm V}_{\bm{m},\bm \kappa}
		\end{bmatrix},
	\end{align}
	where were we define 
	\begin{align}
		&\Lambda_{\bm{m}, \bm  \kappa} := \Lambda_{\bm m}  +i \langle \bm  \Omega , \bm  \kappa \rangle,\\
		&{\bm Y}_{\bm{m}, \bm  \kappa} 
		:=
		\bm{M}\sum_{j=1}^M \sum_{\substack{\bm{u}, \bm{k} \in           \mathbb{N}^M \\ \bm{u} + \bm{k} - \bm{e}_j =      \bm{m} \\ |\bm{k}| < m}}
		\big(    	   
		\bm{\dot{w}}_{\bm{u}}  u_j S^j_{\bm{k},\bm  \kappa}
		+     \bm{\dot{x}}_{\bm{k},\bm \kappa} k_j R^j_{\bm{u}}
		\big) 
		+ \bm{C}\bm{V}_{\bm{m}, \bm \kappa}   
		\nonumber \\
		&- \sum_{\bm{n}\in \mathbb{N}^N}          
		\bm{f}_{\bm{n}} 
		\sum_{j=1}^{2n}  n_j 
		\bigg(
		\sum_{\substack{\bm{u}, \bm{h} \in \mathbb{N}^M \\ \bm{u} + \bm{h} = \bm{m}}} 
		H_{\bm{n- \bm{e}_j}, \bm{h}} X^j_{\bm{u}, \bm  \kappa }
		\bigg)-
		\sum_{\bm{n}\in \mathbb{N}^N}
		\bm{g}_{\bm{n},\bm \kappa} 
		H_{\bm{n}, \bm{m}},
		\\
		&
		{\bm V}_{\bm{m}, \bm  \kappa} 
		:=
		\sum_{j=1}^M 
		\sum_{\substack{\bm{u}, \bm{k} \in \mathbb{N}^M \\ \bm{u} + \bm{k} - \bm{e}_j = \bm{m} \\ |\bm{k}| < m}}
		\bm{w}_{\bm{u}}
		u_j S^j_{\bm{k},\bm  \kappa}   
		+  
		\bm{x}_{\bm{k},\bm \kappa} k_j R^j_{\bm{u}}.
	\end{align}
	Analogous to the autonomous case (see eq.~\eqref{app_eq:Relation_Aut}), an explicit relation between displacement and velocity parametrization can be established from the invariance equation~\eqref{app_eq:2ndOrderSytemNonAutinvEq} as
	\begin{align} \label{app_eq:Hom_eq_nonaut_withvelocity}
		\bm{\dot{x}}_{\bm{m},\bm \kappa} 
		=
		\Lambda_{\bm{m}, \bm  \kappa}
		\bm{x}_{\bm{m}, \bm  \kappa }
		+
		\sum_{j=1}^M 
		\bm  \phi_j 
		S^j_{\bm{m},\bm \kappa}
		+  {\bm V}_{\bm{m}, \bm  \kappa},
	\end{align}
	which is identical to the general result of Lemma~\ref{Lem:HomNonautVel}, as we show later. We utilize relation~\eqref{app_eq:Hom_eq_nonaut_withvelocity} to rewrite ${\bm Y}_{\bm{m}, \bm \kappa}$ in terms of the displacement parametrization and substitute the resulting expression into the first set of equations in system~\eqref{app_eq:2ndOrderSytemNonAutinvEq} to obtain
	\begin{align}\label{app_eq:HomolEq_nonaut_2ndorder}
		&\overbrace{
			\big(
			\bm{K} 
			+
			\Lambda_{\bm{m}, \bm  \kappa}
			\bm{C} 
			+
			(\Lambda_{\bm{m}, \bm  \kappa} )^2
			\bm{M}
			\big)
		}^{\bm{L}_{\bm{m},\bm  \kappa}}
		\bm{x}_{\bm{m}, \bm  \kappa }
		=
		\bm{D}_{\bm{m},\bm  \kappa}\bm{S}_{\bm{m},\bm \kappa}
		+ 
		\bm{C}_{\bm{m}, \bm  \kappa},
	\end{align}
	where $\bm{C}_{\bm{m}, \bm  \kappa}=- \Lambda_{\bm{m}, \bm  \kappa}\bm{M} {\bm V}_{\bm{m}, \bm  \kappa} - {\bm Y}_{\bm{m}, \bm \kappa}$
	and $\bm{D}_{\bm{m},\bm  \kappa}\in \mathbb{C}^{n\times M}$ is a matrix whose $j^{\mathrm{th}}$ column is given by $(\bm{D}_{\bm{m},\bm  \kappa})_{j} :=-(\bm{C}  +\bm{M}(\lambda_j +\Lambda_{\bm{m}, \bm  \kappa}) )\bm  \phi_j$ for every $ j=1,\dots, M$.  This concludes the proof of Lemma~\ref{Lem:HomNonaut}.
	
	For proving Lemma~\ref{Lem:HomNonautVel}, we use the chain rule using the fact that the non-autonomous reduced dynamics trajectories satisfy $\dot{\bm{p}} = \bm{R}_{\epsilon}(\bm p,\phi)$. To obtain an expression for $\dot{\bm{x}}$, we collect $\mathcal{O}(\epsilon)$ terms in the time-derivative of the displacement parametrization of the SSM as 
	\begin{align}
		\dot{\bm{x}}(\bm{p},\boldsymbol\phi) &= \frac{d \bm{w}_{\epsilon}}{dt}  \Bigg|_{\epsilon} =  
		D\bm{w}(\bm{p}) \cdot \bm S +  D_{\bm{p}}\bm x \cdot \bm R + D_{\bm{\phi}}\bm x \cdot \bm \Omega .
	\end{align}    
	Finally, we restrict this expression to the multi-index $\bm{m}$ and harmonic $\bm{\kappa}$ to obtain
	\begin{align}
		\label{eq:vel_exp}
		\dot{\bm x}_{\bm{m}, \bm  \kappa} 
		=
		\sum_{j=1}^M 
		\sum_{\substack{\bm{u}, \bm{k} \in \mathbb{N}^M \\ \bm{u} + \bm{k} - \bm{e}_j = \bm{m} }}
		\bm{w}_{\bm{u}}
		u_j S^j_{\bm{k},\bm  \kappa}   
		+  
		\bm{x}_{\bm{k},\bm \kappa} k_j R^j_{\bm{u}}.
	\end{align}
	Indeed, the expression in equation~\eqref{eq:vel_exp} is identical to the one based on invariance equation computation given in \eqref{app_eq:Hom_eq_nonaut_withvelocity}.

	\paragraph*{Nonautonomous reduced dynamics}
	Similarly to the autonomous case, the underdeterminacy of the linear system~\eqref{app_eq:HomolEq_nonaut_2ndorder} allows for different solution choices for the reduced dynamics parametrization.
	Specifically, a normal-form style parametrization is appealing when (near)-resonances are present among the master subspace eigenvalues and a given harmonic of the forcing frequency. Indeed, for an eigenvalue $\lambda_i$ associated to the master subspace, a near-resonance of the form
	\begin{align}\label{app_eq:ResCond_nonaut}
		\lambda_i \approx \Lambda_{\bm{m},\bm  \kappa}
	\end{align}
	results in the coefficient matrix ${\bm{L}_{\bm{m},\bm  \kappa}}$ being nearly singular. To ensure (robust) solvability of system~\eqref{app_eq:HomolEq_nonaut_2ndorder}, the coefficients $\bm{S}_{\bm{m},\bm \kappa}$ must be chosen such that right-hand side of system~\eqref{app_eq:HomolEq_nonaut_2ndorder} is orthogonal to the (near) kernel of ${\bm{L}_{\bm{m},\bm  \kappa}}$. In particular, we require (cf.~Jain and Haller\cite{Jain2021}) 
	\begin{align}
		(\bm  \theta_i)^*
		\big(
		\bm{K} 
		+
		\Lambda_{\bm{m}, \bm  \kappa}
		\bm{C} 
		+
		(\Lambda_{\bm{m}, \bm  \kappa} )^2
		\bm{M}
		\big)
		\bm{x}_{\bm{m}, \bm  \kappa }
		= 0,
	\end{align}
	which directly provides the essentially non-zero reduced dynamics coefficients $S^i_{\bm{m},\bm \kappa}$ in the normal-form parametrization style as
	\begin{align}
		(\bm  \theta_i)^*
		(
		\bm{C}  
		&+
		\bm{M}
		(\lambda_i 
		+
		\Lambda_{\bm{m}, \bm  \kappa}
		)
		)
		\bm  \phi_i S^i_{\bm{m},\bm \kappa}
		\nonumber \\ \label{eq:normal_form_nonauto}&= 
		- (\bm  \theta_i)^* (
		\Lambda_{\bm{m}, \bm  \kappa}
		\bm{M} {\bm V}_{\bm{m}, \bm  \kappa} 		
		+ {\bm Y}_{\bm{m}, \bm  \kappa}
		)
	\end{align}
	\subsection{First-order computation}\label{app_sec:SSMComp_1storder}
	For general first-order dynamical systems the equations do not exhibit the internal structure~\eqref{eq:SSM_param_struct} seen for second-order mechanical systems (Lemmas~\ref{lem:HomAutVel} \& \ref{Lem:HomNonautVel}). Hence, we develop general expressions for SSM computation in the first-order system~\eqref{eq:DS_FirstOrder}. Once again, we operate in physical coordinates using the multi-index notation.  
	
	\subsubsection*{Autonomous SSM}
	We substitute the autonomous first-order SSM expansion
	\begin{align}\label{app_eq:AutSSMParam}
		\bm{W}(\bm{p}) 
		&=  \sum_{\bm{m}\in \mathbb{N}^M} \bm{W}_{\bm{m}} \bm{p}^{\bm m},
	\end{align}
	where $\bm{W}_{\bm m} \in \mathbb{C}^N$, and its reduced dynamics expansion~\eqref{eq:AutRDParam} into the autonomous invariance equation~\eqref{eq:InvEq_aut} to obtain
	\begin{align}
		\bm{B} (\text{D}_{\bm p} \sum_{\bm{u}\in \mathbb{N}^M} \bm{W}_{\bm{u}} &\bm{p}^{\bm u} ) 
		\sum_{\bm{k}\in \mathbb{N}^M}\bm{R}_{\bm{k}} \bm{p}^{\bm k} 
		\nonumber = 
		\bm{A}\sum_{\bm{m}\in \mathbb{N}^M}\bm{W}_{\bm{m}} \bm{p}^{\bm m} 
		\\ \label{eq:inv_exp_first} 
		&+ \sum_{\bm{n}\in \mathbb{N}^N}  \bm{F}_{\bm{n}} (\sum_{\bm{u}\in \mathbb{N}^M}\bm{W}_{\bm{u}} \bm{p}^{\bm u})^{\bm n},
	\end{align}
	where $\bm{F}_{\bm{n}}\in\mathbb{R}^N$ are the coefficients in the nonlinearity expansion~\eqref{eq:expF} in the first-order form. Collecting coefficients of every unique monomial in system~\eqref{eq:inv_exp_first}, we obtain a linear system for each monomial multi-index $\bm m$, as we detail below.
	At the leading-order, i.e, for monomial multi-index $\bm{p}^{\bm{e}_i}$, we obtain
	\begin{align}\label{eq:leading_eig_first}
		\bm{B} (\sum_{j=1}^M \bm{W}_{\bm{e}_j} ) 
		\bm{R}_{\bm{e}_i} 
		=
		\bm{A} \bm{W}_{\bm{e}_i}.
	\end{align}
	From the eigenvalue problem~\eqref{eq:LeftEigenvalue}, we see that the choice $\bm{W}_{\bm{e}_i} = \bm{v}_i$ and $R^j_{\bm{e}_j} = \delta_{ij} \lambda_i$ solves equation~\eqref{eq:leading_eig_first}. It is noteworthy that due to this diagonal choice of the leading-order autonomous reduced dynamics, the equations~\eqref{eq:inv_exp_first} get decoupled between distinct multi-indices at each order. Hence, collecting the coefficients of higher-order terms in system~\eqref{eq:inv_exp_first} for a multi-index ${\bm m}$, we obtain
	\begin{align}\label{app_eq:Hom_Aut_First}
		\underbrace{
			\bigg(\bm{A}
			-
			\Lambda_{\bm{m}} \bm{B}\bigg)
		}_{:= \bm{\mathcal{L}}_{\bm{m}}}
		\bm{W}_{\bm m}
		=
		\sum_{j=1}^M \bm{B} \bm{v}_j R^j_{\bm m}
		+
		\bm{\mathcal{C}}_{\bm m},
	\end{align}
	where we define 
	\begin{align}
		\bm{\mathcal{C}}_{\bm m} &:=  
		\sum_{j=1}^M  \sum_{\substack{\bm{u}, \bm{k} \in \mathbb{N}^M \\ \bm{u} + \bm{k} - \bm{e}_j = \bm{m} \\ 1 < |\bm{u}| < m }} \bm{B}\bm{W}_{\bm{u}} u_j R^j_{\bm{k}}
		-\sum_{\bm{n}\in \mathbb{N}^N}  \bm{F}_{\bm{n}}  H_{\bm{n}, \bm{m}}.
	\end{align}
	%
	%
	
	\paragraph*{Autonomous reduced dynamics}
	Analogous to the discussion for second-order systems, we observe that linear system~\eqref{app_eq:Hom_Aut_First} is underdetermined, which allows for choices of parametrizations for the reduced dynamics. Indeed, for an eigenvalue $\lambda_i$ associated to the master subspace, a near-resonance of the form~\eqref{app_eq:NearResonance}
	results in the coefficient matrix $\bm {\mathcal{L}_m}$ being nearly singular. To ensure the solvability of the system~\eqref{app_eq:Hom_Aut_First}, we require (cf. Jain and Haller\cite{Jain2021}) 
	\begin{align}
		(\bm{u}_i)^* \bm{\mathcal{L}}_{\bm m} \bm{W}_{\bm m} = \bm{0},
	\end{align}
	which provides the nonzero reduced dynamics coefficients $\bm{R}^i_{\bm{m}}$ in the normal-form parametrization style as 
	\begin{align}
		R^i_{\bm m} = -(\bm{u}_i)^* \bm{\mathcal{C}}_{\bm m},
	\end{align}
	where we have chosen a normalization for the eigenvectors that satisfies $(\bm{u}_i)^* \bm{B} \bm{v}_j = \delta_{ij}$ for every $i,j = 1,\dots,M$.
	\subsubsection*{Non-autonomous SSM}
	To derive the non-autonomous invariance equation in first-order form, we expand the $\mathcal{O}(\epsilon)$-terms in the non-autonomous SSM parametrization~\eqref{eq:ExpandW_e1} as
	\begin{align}\label{eq:NonAutSSMexp}
		\bm{X}(\bm{p},\bm\phi) 
		&=  \sum_{\bm{m}\in \mathbb{N}^M} \sum_{\bm\kappa \in \mathbb{Z}^k} \bm{X}_{\bm{m},\bm\kappa } e^{i \langle \bm\kappa , \bm\phi  \rangle} \bm{p}^{\bm m},
	\end{align}
	where $\bm{X}_{\bm{m},\bm\kappa } \in \mathbb{C}^N$. Substituting the expansions~\eqref{eq:NonAutSSMexp} and~\eqref{eq:expandNonAutRD} into the nonautonomous invariance equation~\eqref{eq:InvEq_nonaut}, and collecting terms corresponding to any given Fourier-harmonic $\bm  \kappa$, we obtain
	\begin{widetext}
		\begin{align} \nonumber
			\bm{B}
			\bigg( 
			\text{D}_{\bm p}( \sum_{\bm{u}\in \mathbb{N}^M} \bm{W}_{\bm{u}} \bm{p}^{\bm u})\sum_{\bm{k}\in \mathbb{N}^M} \bm{S}_{\bm{k},\bm  \kappa} \bm{p}^{\bm k}  + 
			(\partial_{\bm p} \sum_{\bm{u}\in \mathbb{N}^M} \bm{X}_{\bm{u},\bm  \kappa}  \bm{p}^{\bm u}) \sum_{\bm{k}\in \mathbb{N}^M} \bm{R}_{\bm{k}} \bm{p}^{\bm k}
			+
			i \langle \bm  \kappa, \bm{\Omega } \rangle\sum_{\bm{m}\in \mathbb{N}^M} \bm{X}_{\bm{m},\bm  \kappa} \bm{p}^{\bm m}  
			\bigg)
			=
			\\ \label{eq:nonAuto_Kappa}
			\bm{A}\sum_{\bm{m}\in \mathbb{N}^M} \bm{X}_{\bm{m},\bm  \kappa}\bm{p}^{\bm m} 
			+	
			\big[\text{D}_{\bm z} \bm{F}_{\bm{n}} (\sum_{\bm{u}\in \mathbb{N}^M}\bm{W}_{\bm{u}} \bm{p}^{\bm u})^{\bm n}  \big]\sum_{\bm{m}\in \mathbb{N}^M} \bm{X}_{\bm{m},\bm  \kappa} \bm{p}^{\bm m}
			+
			\sum_{\bm{n}\in \mathbb{N}^N}
			\sum_{\bm{m} \in \mathbb{N}^M}
			\bm{G}_{\bm{n} ,\bm  \kappa}
			H_{\bm{n}, \bm{m}},
		\end{align}
	\end{widetext}
	where $\bm{G}_{\bm{n} ,\bm  \kappa}\in\mathbb{C}^N$ are the coefficients in the forcing expansion~\eqref{app_eq:Force_expand_FirstOrder} in the first-order form.  
	Finally, collecting the coefficients corresponding to any given monomial multi-index $\bm{m}$ in system~\eqref{eq:nonAuto_Kappa}, we obtain (cf. Poinsioen et al.\cite{Ponsioen2020})
	%
	%
	
	%
	\begin{align}
		\label{app_eq:Hom_NonAut_First}
		\underbrace{
			\bigg( 
			\bm{A}
			-
			( \Lambda_{\bm m} +  i \langle \bm  \kappa, \bm  \Omega \rangle)\bm{B} 
			\bigg)
		}_{:=\bm{\mathcal{L}}_{\bm{m},\bm \kappa} }
		\bm{X}_{\bm{m},\bm  \kappa} 
		=
		\sum_{j=1}^M 
		\bm{B} 
		\bm{v}_j S^j_{\bm{m},\bm  \kappa}  
		+
		\bm{\mathcal{C}}_{\bm{m}, \bm  \kappa},
	\end{align}
	where
	\begin{align}
		\bm{\mathcal{C}}_{\bm{m}, \bm  \kappa} &:=
		\bm{B} 
		\bigg( 
		\sum_{j=1}^M \sum_{\substack{\bm{u}, \bm{k} \in \mathbb{N}^M \\ \bm{u} + \bm{k} - \bm{e}_j = \bm{m} \\ |\bm{k}| < m}}
		\bm{W}_{\bm{u}}                       u_j S^j_{\bm{k},\bm  \kappa}   
		\nonumber \\ &+  
		\sum_{j=1}^M \sum_{\substack{\bm{u}, \bm{k} \in \mathbb{N}^M \\ \bm{u} + \bm{k} - \bm{e}_j = \bm{m} \\ |\bm{u}| < m }}
		\bm{X}_{\bm{u},\bm  \kappa}    u_j R^j_{\bm{k}} 
		\bigg)
		-
		\sum_{\bm{n}\in \mathbb{N}^N}
		\bm{G}_{\bm{n} ,\bm  \kappa}
		H_{\bm{n}, \bm{m}} 
		\nonumber \\
		&-  \sum_{\bm{n}\in \mathbb{N}^N}          
		\bm{F}_{\bm{n}}
		\sum_{j=1}^{2n}  n_j 
		\bigg(
		\sum_{\substack{\bm{u}, \bm{h} \in \mathbb{N}^M \\ \bm{u} + \bm{h} = \bm{m}}} 
		H_{\bm{n- \bm{e}_j}, \bm{h}} X^j_{\bm{u}, \bm  \kappa }
		\bigg).
	\end{align}
	Recursively solving the linear system~\eqref{app_eq:Hom_NonAut_First} provides us the coefficients $\bm{X}_{\bm{m},\bm  \kappa},\bm{S}_{\bm{m},\bm  \kappa}$ defining the $\mathcal{O}(\epsilon)$-terms in the parametrizations for the nonautonomous SSM and its reduced dynamics. At the leading order, system~\eqref{app_eq:Hom_NonAut_First} simplifies as (cf. Jain and Haller~\cite{Jain2021})
	\begin{align}\label{app_eq:InvEqOrder0}
		\bigg( 
		\bm{A}
		- i \langle \bm  \kappa, \bm  \Omega \rangle\bm{B} 
		\bigg)
		\bm{X}_{\bm{0},\bm  \kappa} 
		=
		\sum_{j=1}^M 
		\bm{B} 
		\bm{v}_j S^j_{\bm{0},\bm  \kappa}  
		+
		\bm{G}_{\bm{0}, \bm  \kappa}.
	\end{align}
	At any higher order $ m=|\bm{m}|>1$, we have $z_m = \binom{m+M-1}{M-1}$ distinct multi-indices. An important consequence of choosing the linear autonomous reduced dynamics to be diagonal, i.e., $R^i_{\bm{e}_j} = \delta_{ij} \lambda_j$, is that the system~\eqref{app_eq:Hom_NonAut_First} gets decoupled between distinct multi-indices at any given order $m$ and for any given harmonic $\bm \kappa$. Thus, instead of solving $z_m \times N$ coupled linear equations in general, we need to solve only $z_m$ linear systems of dimension $N$ each.
	
	%
	%
	
	%
	\paragraph*{Reduced Dynamics}
	Analogous to the discussion for second-order nonautonomous setting, we observe that the linear system~\eqref{app_eq:Hom_NonAut_First} is underdetermined, which allows for choices of parametrizations for the reduced dynamics. Indeed, for an eigenvalue $\lambda_i$ associated with the master subspace, a near-resonance of the form~\eqref{eq:ResCond_nonaut}
	results in the coefficient matrix ${\bm{\mathcal{L}}_{\bm{m},\bm \kappa}}$ being nearly singular.  
	
	To ensure (robust) solvability of system~\eqref{app_eq:Hom_NonAut_First}, the coefficients coefficients $\bm{S}_{\bm{m},\bm \kappa}$ must be chosen such that right-hand side of system~\eqref{app_eq:HomolEq_nonaut_2ndorder} is orthogonal to the (near) kernel of ${\bm{\mathcal{L}}_{\bm{m},\bm  \kappa}}$. In particular, we require 
	\begin{align}
		(\bm{u}_i)^* \bm{\mathcal{L}}_{\bm{m},\bm \kappa} \bm{X}_{\bm{m},\bm  \kappa} = \bm{0},
	\end{align}
	which directly provides us the essentially non-zero reduced dynamics coefficients $S^i_{\bm{m},\bm \kappa}$ in the normal-form parametrization style as 
	\begin{align}
		S^i_{\bm{m},\bm  \kappa} = (\bm{u}_i)^* \bm{\mathcal{C}}_{\bm{m},\bm  \kappa}.
	\end{align}
	Specifically, at the leading-order, we have $S^i_{\bm{0},\bm  \kappa} = (\bm{u}_i)^* \bm{G}_{\bm{0}, \bm  \kappa}$ (cf. Jain and Haller\cite{Jain2021}) . 
	\section{Proof of Lemma \ref{lem:FRC} } \label{App:FRC_Lemma}
	In this section we provide the proofs for the statements in Lemma \ref{lem:FRC}. We use a polar transformation of the parametrization coordinates $\bm{p} = [p, \conj{p}] = [\rho e^{i\theta}, \rho e^{-i\theta}]$. Thus, $\dot{p} = (\dot{\rho} + \rho i \dot{\theta}) e^{i\theta}$. Specifically, the reduced dynamics~\eqref{eq:red_dyn_expansion} over the two-dimensional nonautonomous SSM ($M=2$) transforms as
	\begin{align}
		(\dot{\rho} + \rho i \dot{\theta})
		&= 
		\sum_{\bm{m}\in \mathbb{N}^l} R^1_{\bm{m}}    (\rho e^{i\theta})^{m_1} (\rho e^{-i\theta})^{m_2} e^{-i\theta}      
		\nonumber \\    
		& \ \ \ \ \ +   \epsilon 
		\sum_{\substack{\bm{m}\in \mathbb{N}^l \\ \kappa \in \mathbb{Z} }}  S^1_{\bm{m},\kappa} e^{i \kappa \phi}  (\rho e^{i\theta})^{m_1} (\rho e^{-i\theta})^{m_2} e^{-i\theta}
		\\ \label{eq:red_polar2}
		&= 
		\sum_{\bm{m}\in \mathbb{N}^l} R^1_{\bm{m}}    \rho^{|\bm{m}|} e^{i\theta(m_1-m_2-1)}
		\nonumber \\     
		& \ \ \ \ \ + 
		\epsilon 
		\sum_{\substack{\bm{m}\in \mathbb{N}^l \\ \kappa \in \mathbb{Z}^* }}  S^1_{\bm{m},\kappa }   \rho^{|\bm{m}|} e^{i	\kappa \phi}  e^{i\theta(m_1-m_2-1)}.
	\end{align}
	Choosing a normal-form style of parametrization~\eqref{app_eq:RD_Aut_Second} for the autonomous reduced dynamics, we obtain nonzero coefficients $R^1_{\bm m}$ only for multi-indices that satisfy the near resonance condition~\eqref{eq:ResCond_aut}. Hence, we have $m_1 - m_2 -1 = 0 $ for every nonzero $R^1_{\bm m}$. Thus, the angular part vanishes for all autonomous terms in eq.~\ref{eq:red_polar2}.
	
	Furthermore, choosing a normal-form style of parametrization~\eqref{eq:normal_form_nonauto} for the nonautonomous reduced dynamics, we obtain nonzero coefficients $S^1_{\bm{m}, {\kappa}}$ only for the pairs $(\bm{m},{\kappa})$ that satisfy the near-resonance condition~\eqref{eq:ResCond_nonaut}. But under the external near-resonance condition~\eqref{eq:ResCond_FRC}, for any pair ($\bm{m}$,$\kappa$) that fulfills the near-resonance conditions~\eqref{eq:ResCond_nonaut}, we have $\kappa = (1 - m_1 + m_2)\kappa_0$. 
	This allows us to rewrite $e^{i\kappa \phi} e^{i\theta(m_1-m_2-1)} = 
	e^{i(\kappa/\kappa_0) (\kappa_0\phi - \theta)}$ in eq.~\eqref{eq:red_polar2}. Now, introducing the phase shift $\psi = \theta - \kappa_0 \phi$ in eq.~\eqref{eq:red_polar2} and using the fact that $\dot{\phi} = \Omega$, we obtain 
	\begin{align}
		(\dot{\rho} + \rho i (\dot{\psi} + \kappa_0\Omega ))
		=
		\sum_{\bm{m}\in \mathbb{N}^l} R^1_{\bm{m}} &   \rho^{|\bm{m}|}
		\nonumber \\ \label{eq:polar_red_complex}
		+ 
		\epsilon 
		S^1_{\bm{0},\kappa_0 }  e^{-i\psi}
		&+ 
		\epsilon 
		\sum_{\substack{\bm{m}\in \mathbb{N}^l \\ \kappa \in \mathbb{Z}^* }}  S^1_{\bm{m},\kappa }  e^{-i (\kappa/\kappa_0) \psi}.
	\end{align}
	Finally, separating the real and imaginary parts of eq.~\eqref{eq:polar_red_complex} results in Eq.~\eqref{eq:PolarDynamics}, which concludes the proof of statement~\ref{lem:i_PolDyn}.
	The zeros of the function
	\begin{align}
		\bm r (\rho, \psi, \Omega) = \bm 0     
	\end{align}
	correspond to fixed points of the polar dynamics~\eqref{eq:PolarDynamics}. For the original non-autonomous reduced dynamics~\eqref{eq:red_dyn_expansion}, such a fixed point represents a trajectory with a constant phase shift $\psi$ relative to the forcing as well as a constant polar amplitude $\rho$. Hence, this fixed point describes a periodic orbit of the reduced dynamics. As the SSM parametrization maps the orbits of the reduced system onto the orbits of the full system, we obtain a periodic steady-state for the full system which concludes the proof of statement~\ref{lem:ii_FRC}.
	For computing the stability of these fixed points, we rewrite the polar dynamics~\eqref{eq:polar_red_complex} in the form 
	\begin{align}
		\begin{bmatrix}
			\dot{\rho} \\ \dot{\psi}
		\end{bmatrix}
		=
		\begin{bmatrix}
			a(\rho) \\ b(\rho) / \rho
		\end{bmatrix}
		&+ 
		\epsilon \bigg( 
		\frac{\bm{Q}(\psi) }{\rho}
		\begin{bmatrix} \Real (S^1_{\bm{0},\kappa_0})  \\ \Imag (S^1_{\bm{0},\kappa_0})  \end{bmatrix} 
		\nonumber \\ \label{eq:jacobian_polar}
		+ \sum_{ \substack{ \bm{m} \in \mathbb{N}^2 \\ \kappa\in \mathbb{Z} }} 
		&\rho^{|\bm{m}|-1} \bm{Q}((\kappa/\kappa_0)\psi) 
		\begin{bmatrix} \Real (S^1_{\bm{m},\kappa})  \\ \Imag(S^1_{\bm{m},\kappa})  \end{bmatrix} 
		\bigg).
	\end{align}
	Evaluating the Jacobian of the right-hand side of eq.~\eqref{eq:jacobian_polar} at a hyperbolic fixed point $(\rho, \psi)$ concludes its stability via linearization. This proves statement~\ref{lem:iii_FRC_stab}.

	\section{Details on the continuation routine}\label{App:COCO}
	
	Within SSMTool 2.4 as well as for the full system analysis, we make use of the \texttt{po} toolbox provided by the continuation toolbox \textsc{coco}~\cite{Dankowicz2013}. In this section, we give details on performing parameter continuation over the SSM-based ROMs. We also list the continuation parameters used for each example in this paper. 
	\subsection{Sensitivity coefficients }  
	For an efficient numerical continuation of periodic orbits in \textsc{coco}, the Jacobian of the reduced vector field with respect to the continuation parameters $\epsilon$ and $\Omega$ and with respect to the parametrization coordinates $\bm p$ should be provided.
	%
	%
	The Jacobian with respect to the parametrization coordinates is straightforward, as the vector field is polynomial in $\bm p$. For the derivative with respect to the forcing amplitude $\epsilon$, only the $\mathcal{O}(\epsilon)$-terms need to be considered, which yields 
	\begin{align}
		\partial_\epsilon \bm{R}_\epsilon (\bm{p},  \phi) =  \bm{S} (\bm{p}, \phi).
	\end{align}
	As the nonautonomous coefficients are obtained by solving system~\eqref{eq:HomNonAut_Second}, they implicitly depend on the forcing frequency, i.e.,  $\bm{S}_{\mathbf{m},\kappa } = \bm{S}_{\mathbf{m},\kappa }(\Omega)$. Accounting for this frequency-dependence of the reduced dynamics coefficients, we write the derivative with respect to $\Omega$ as
	\begin{align}\label{app_eq:ContOmDepRD}
		\partial_\Omega \bm{R}_\epsilon (\bm{p}, \boldsymbol \phi) 
		&=
		\epsilon
		\sum_{\bm{m}\in \mathbb{N}^M} 
		\sum_{\kappa \in \mathbb{Z}}
		(\partial_\Omega\bm{S}_{\bm{m},\kappa } e^{i \kappa \Omega t
		}
		+
		i \kappa t \bm{S}_{\bm{m},\kappa } e^{i \kappa \Omega t
		}
		)
		{\bm p}^{\bm m},
	\end{align}
	where the computation of $\partial_\Omega \bm{S}_{\bm{m},\kappa }$ is not straightforward and we choose to ignore these coefficients in Eq.~\eqref{app_eq:ContOmDepRD}. This choice may lead to an inaccurate computation of the Jacobian, which may reduce the rate of convergence to the solution during numerical continuation. However, this choice does not compromise the solution accuracy, which is controlled by separate numerical tolerances (see Table~\ref{tab:CocoParametersDescription}). These tolerances are related to the solution of the invariance equations and to the residual in the Newton iteration, which we evaluate up to machine precision. Opreni et al.~\cite{Opreni2022} extend this choice to the SSM and its reduced dynamics parametrizations by assuming the coefficients $\bm{W}_{\bm{m},\kappa}, \bm{S}_{\bm{m},\kappa}$ to be independent of the forcing frequency $\Omega$, and computing them only once across the entire frequency range. Such an assumption cannot guarantee solution accuracy, but offers additional computational advantages. Hence, in contrast to Opreni et al.~\cite{Opreni2022}, we choose to solve the nonautonomous invariance equations for each value of the forcing frequency here.
	\subsection{Period-doubling bifurcations and stability boundary}
	The \texttt{po}-toolbox of \textsc{coco}, which is used for periodic-orbit continuation in our SSM-based ROMs as well as in the full system, uses Floquet multipliers to determine the stability of a periodic orbit and detect bifurcations (if any). Specifically, a change in the stability of a periodic orbit occurs when a Floquet multiplier crosses the unit circle in the complex plane, resulting in a period-doubling bifurcation~\cite{Dankowicz2013,MingwuPart2}. In a system subject to periodic, parametric excitation, switching to a branch of period-doubling bifurcations and performing continuation along this branch using \textsc{coco} then provides us the stability boundary for the trivial periodic response. 
	%
	%
	\subsection{Parameters for collocation}
	
	The parameters for the continuation of the SSM-based ROMs are presented in TABLE~\ref{tab:CocoParametersFRCSSM}. TABLE~\ref{tab:CocoParametersFRC} lists the collocation and continuation parameters used for various simulations of the full system. Note that since the level set method of SSMTool 2.4 does not require continuation and relies solely on the evaluation of the zero level set of a function, there are no continuation parameters listed for the examples in Section~\ref{sec:ExamplesFRC}. TABLE~\ref{tab:CocoParametersDescription} gives a short description of each parameter along with its default value in \textsc{coco}.
	\begin{table}
		\centering
		\caption{Parameters for continuation with \texttt{po} \cite{Dankowicz2013}}
		\begin{tabular}{ l r l } 
			\hline
			Parameter & Default & Description \\ [0.5ex] 
			\hline\hline
			\texttt{NAdapt}      & 0   & Period for refinement of coll. mesh  \\
			\texttt{h\_0}       & 0.1 & Initial continuation stepsize \\
			\texttt{h\_max}     & 0.5 & Max. continuation stepsize  \\
			\texttt{h\_min}      & 0.01& Min. continuation stepsize  \\
			\texttt{h\_fac\_max} & 2   & Max. factor for apdapting stepsize  \\
			\texttt{h\_fac\_min} & 0.5 & Min. factor for adapting stepsize  \\ 
			\texttt{MaxRes}      & 0.1 & Maximal residual norm  \\
			\texttt{bi\_direct}  & 1   & Bidirectional continuation (boolean)\\
			\texttt{PtMX}        & 100 & Max. number of continuation steps\\
			\texttt{al\_max}     & 7   & Max. angle btw. consecutive tangents \\
			\texttt{ItMX}        & 10  & Iterations before adapting stepsize \\
			\texttt{TOL}         & 1e-6& Tolerance for Newton Iteration  \\
			\texttt{NTST}        & 10  & Sub-intervals in coll. discretization\\
			\texttt{NCOL}        & 4   & Number of collocation points  \\
			\hline
			\label{tab:CocoParametersDescription}
		\end{tabular}
	\end{table}
	\begin{table}
		\centering
		\caption{Collocation and continuation parameters set in the \texttt{po}-toolbox of \textsc{coco} for the SSM-based ROMs in different examples.}
		\begin{tabular}{l r r r r r r} 
			\hline
			Figure & \ref{fig:StabDiag_2DMathieu_SD} & \ref{fig:StabDiag_2DMathieu_FRC} & \ref{fig:BB_SD} & \ref{fig:BB_FRC} & \ref{fig:PB_SD} & \ref{fig:PB_FRC} \\ 
			\hline
			Parameter & Value  \\ [0.5ex] 
			\hline\hline
			\texttt{NAdapt}      & 0   & 0   & 0   & 0   & 0   & 1    \\
			\texttt{h\_0}        & 0.1 & 1e-4& 0.1 & 1e-4& 0.1 & 1e-4  \\
			\texttt{h\_max}      & 0.5 & 0.5 & 0.5 & 0.5 & 0.5 & 50  \\
			\texttt{h\_min}      & 0.01& 1e-4& 0.01& 1e-4& 0.01& 1e-4 \\
			\texttt{bi\_direct}  & 1   & 0   & 1   & 0   & 1   & 0    \\
			\texttt{PtMX}        & 50  & 35  & 50  & 40  & 50  & 100  \\
			\hline
			\label{tab:CocoParametersFRCSSM}
		\end{tabular}
	\end{table}
	\begin{table}
		\centering
		\caption{Collocation and continuation parameters set in the \texttt{po}-toolbox of \textsc{coco} for the full system in different examples.}
		\begin{tabular}{l r r r r r r r r r r r r r r} 
			\hline
			Figure & \ref{fig:StabDiag_2DMathieu_SD} & \ref{fig:StabDiag_2DMathieu_FRC} & \ref{fig:BB_SD} & \ref{fig:BB_FRC} & \ref{fig:PB_SD} & \ref{fig:PB_FRC} & \ref{fig:SelfExcite_FRC} & \ref{fig:BB_external} & \ref{fig:BB_external} Isola \\ 
			\hline
			Parameter& Value    \\ [0.5ex] 
			\hline\hline
			\texttt{NAdapt}      & 0   & 0   & 0   & 1   & 0   & 1   & 1               & 1   & 1   \\
			\texttt{h\_0}        & 0.1 & 1e-4& 0.1 & 0.1 & 0.1 & 1e-4& 0.1           & 0.1 & 0.1 \\
			\texttt{h\_max}      & 0.5 & 0.5 & 0.5 & 0.5 & 0.5 & 50  & 0.5           & 0.5 & 0.5 \\
			\texttt{MaxRes}      & 0.1 & 0.1 & 0.1 & 0.1 & 0.1 & 0.1 & 0.1          & 0.1 & 0.1\\
			\texttt{bi\_direct}  & 1   & 0   & 1   & 0   & 1   & 0   & 1               & 1   & 0   \\
			\texttt{PtMX}        & 10  & 60  & 100 & 60  & 100 & 500 & 350            & 200 & 90   \\
			\texttt{ItMX}        & 10  & 15  & 10  & 15  & 10  & 15  & 10              & 10  & 10   \\
			\texttt{NTST}        & 10  & 30  & 10  & 30  & 10  & 70  & 70             & 20  & 80   \\
			\hline
			\label{tab:CocoParametersFRC}
		\end{tabular}
	\end{table}
	\section{Modal equations for the prismatic beam}\label{App:PrismaticBeam}
	
	In this section, we derive the ODEs governing the evolution of the modal coordinates of a clamped-hinged prismatic beam as a function of the physical parameters, i.e., elastic modulus $E$, density $\rho$, radius of gyration $r$, area $A$, and moment of inertia of the cross section $I$, and a characteristic length $L$. The characteristic length may be the length of the beam or a typical wavelength for oscillatory modes of the beam. We generally assume that the beam length is obtained by multiplying the characteristic length $L$ by a dimensionless constant $l$ as $lL$. At the hinged end, the beam is subject to an axial load $p_a (t)$. The PDE governing the behaviour of the transverse displacement~$w$ of the beam in a nondimensional form is given as~\cite{Nayfeh1974NonlinearElements, Barari2011NonlinearBeams}
	\begin{align}\label{eq:PDEPB}
		\frac{\partial^4 w}{\partial y^4} + \frac{\partial^2 w}{\partial t^2}
		&=  +\epsilon 
		\big(H \frac{\partial^2w}{\partial y^2}  - 2c \frac{\partial w}{\partial t}
		p_t(y,t)
		-
		p_a(t) \frac{\partial^2w}{\partial y^2}   
		\big),\\
		w(0) &= w''(0) = w(l) = w'(l) = 0,
	\end{align}
	where the following dimensionless quantities are used~\cite{Nayfeh1974NonlinearElements}
	\begin{align}
		y &= \frac{y'}{L}, &  t &= \sqrt{\frac{Er^2}{\rho L^4}}t',
		&
		w &= \frac{L}{r^2}w',     
		\\  p &= \frac{L^7}{r6EA}p',  &
		c &= \frac{L^4}{2r^3 \sqrt{\rho E}} \frac{r^2}{L^2} c',
	\end{align}
	with $(\bullet)'$ denoting the corresponding quantities in physical units. Specifically, $p'$ denotes the distributed load applied along the beam; $c'$ is the distributed damping coefficient; $y', t', w'$ represent the axial coordinate, the time, and the transverse displacement. The axial stretching forces due to the nonlinear bending-stretching coupling are accounted by the term $H$ defined as 
	\begin{align}
		H = \frac{r^2}{2L^2} \int_0^l \big( \frac{\partial w}{\partial y}\big)^2 dy.
	\end{align}
	In this example, we choose the slenderness ratio to be $\frac{r^2}{L^2} = 1 \times 10^{-4}$ in line with the original treatment of Nayfeh et al.\cite{Nayfeh1974NonlinearElements}, who analyze the prismatic beam under weak nonlinearities. A modal expansion of the displacement field $w$ yields
	\begin{align}\label{eq:modal_exp}
		w(y,t) = \sum_{i} \psi_i(y) z_i(t),
	\end{align}
	where $\psi_i(y)$ denotes the $i^{\mathrm{th}}$ eigenfunction associated to the eigenfrequency $\omega_i$ of the linearized version of the PDE~\eqref{eq:PDEPB} around the trivial solution. These eigenfunctions are solutions of the eigenvalue problem 
	\begin{align}\label{eq:PB_lin_modal_eqs}
		\frac{ \partial^4 \psi_i}{\partial y^4} &= \omega_i^2 \psi_i,\\
		\psi(0) &= \psi''(0) = \psi(l) = \psi'(l) = 0,
	\end{align}
	where we choose an orthonormal representation for these eigenfunctions, i.e.,
	\begin{align}
		\int_{0}^l  \psi_i(y) \psi_j(y) = \delta_{ij}dy.
	\end{align}
	Nayfeh et al.~\cite{Nayfeh1974NonlinearElements} have computed explicit expressions for these eigenfunctions in terms of trigonometric and hyperbolic functions. Substituting the modal expansion~\eqref{eq:modal_exp} into system~\eqref{eq:PDEPB}, and performing a Galerkin projection, we obtain the set of ODEs governing the evolution of the modal coordinates $z_j(t)$ as
	\begin{align}\label{eq:modal_eqs}
		\omega_j^2 z_j + \Ddot{z}_j +  2c \epsilon \dot{z}_j= \sum_{i,k,s} \epsilon \alpha_{jiks} z_i z_k z_s + \epsilon   \sum_i p_a(t) z_i a_{ji},
	\end{align}
	where
	\begin{align}
		a_{ji} &= \int_0^l  \psi_j \ \psi''_i dy,
		\\
		\alpha_{jiks} &= \bigg(\frac{r^2}{2L^2} \int_0^l \psi'_s \psi'_k dy \bigg)\bigg(\int_0^l  \psi_j  \psi''_i dy\bigg).
	\end{align}
	We have assumed the external load to be applied at the hinged tip of the beam and set the transversal excitation to zero. Hence, the axial forcing effectively leads to a parametric excitation of the transverse modal coordinates in system~\eqref{eq:modal_eqs}.

	\section*{References}
	
	\bibliographystyle{naturemag}
	\bibliography{Library/references}
	
\end{document}